\newcommand{\ignore}[1]{}
\renewcommand{\P}{\mathcal{P}}
\newcommand{\R}{\mathbf{R}}
\newcommand{\C}{\mathbf{C}}
\newcommand{\pts}{\mathcal P}
\newcommand{\vb}{{\bf v}}
\newcommand{\Vb}{{\mathbf{V}}}
\newcommand{\parag}[1]{\vspace{2mm}\noindent{\bf #1} }
\newtheorem{theorem}{Theorem}[section]
\newtheorem{corollary}[theorem]{Corollary}
\newtheorem{lemma}[theorem]{Lemma}
\newtheorem*{th:FewDistances}{Theorem \ref{th:FewDistancesBetweenConics}}
\newtheorem*{th:PerpendicularPlanesSpecialForm}{Theorem \ref{th:PerpendicularPlanesSpecialForm}}
\newtheorem*{co:PerpendicularPlanesFewDistances}{Corollary \ref{co:PerpendicularPlanesFewDistances}}
\newtheorem*{th:FewOrMany}{Theorem \ref{th:FewOrMany}}
\def\eps{{\varepsilon}}
\def\bv{\mathbf{v}}
\newcommand{\coeff}{\textit{coeff}\hspace{2pt}}
\title{Distinct Distances in $\R^3$\\Between Quadratic and Orthogonal Curves\thanks{This work was done as part of the 2021 Polymath Jr program, partially supported by NSF award DMS–2113535.} }
\author{Toby Aldape\thanks{Department of Mathematics, University of Texas at Austin, TX, USA. {\sl tmaldape@utexas.edu}.},
\ \ Jingyi Liu\thanks{Department of Computer Science, Princeton University, NJ, USA.
{\sl jl1606@princeton.edu}.},
\ \ Gregory Pylypovych\thanks{Department of Mathematics, Massachusetts Institute of Technology, MA, USA. {\sl gpylypov@mit.edu}.},
\ \ Adam Sheffer\thanks{Department of Mathematics, Baruch College, City University of New York, NY, USA.
{\sl adamsh@gmail.com}. Supported by NSF award DMS-1802059.},
\ \ Minh-Quan Vo\thanks{Department of Mathematics and Computer Science, University of Science, Vietnam National University, Ho Chi Minh City, Vietnam.
{\sl mqmath0000@gmail.com}.}}
\begin{document}

\tikzset{font=\small,
edge from parent fork down,
level distance=1.75cm,
every node/.style=
    {rectangle,
    minimum height=10mm,
    minimum width=10mm,
    draw=blue!80,
    very thick,
    align=center,
    text depth = 2pt
    },
edge from parent/.style=
    {draw=blue!60,
    thick
    }  
}

\date{}

\maketitle
\begin{abstract}
We study the minimum number of distinct distances between point sets on two curves in $\R^3$.
Assume that one curve contains $m$ points and the other $n$ points. Our main results: 

(a) When the curves are conic sections, we characterize all cases where the number of distances is $O(m+n)$. This includes new constructions for points on two parabolas,  two ellipses, and one ellipse and one hyperbola.
In all other cases, the number of distances is $\Omega(\min\{m^{2/3}n^{2/3},m^2,n^2\})$. 

(b) When the curves are not necessarily algebraic but smooth and contained in perpendicular planes, we characterize all cases where the number of distances is $O(m+n)$. This includes a surprising new construction of non-algebraic curves that involve logarithms. 
In all other cases, the number of distances is $\Omega(\min\{m^{2/3}n^{2/3},m^2,n^2\})$. 
\end{abstract}

\section{Introduction}

Erd\H{o}s \cite{erd46} introduced his distinct distances problem in 1946, which has motivated significant mathematical developments in the fields of discrete geometry and additive combinatorics. For a given $n$, the problem asks for the minimum number of distinct distances spanned by $n$ points in $\R^2$. Given a finite point set $\P$, we define $D(\P)$ as the number of distinct distances spanned by pairs of $\P^2$. The distinct distances problem asks for $\min D(\P)$, where the minimum is over all sets $\P\subset\R^2$ of $n$ points. Erd\H{o}s discovered such a set $\P$ that satisfies $D(\P)=O(n/\sqrt{\log n}).$ 
The problem was almost completely resolved by Guth and Katz \cite{GK15}, who proved that every set $\pts$ of $n$ points satisfies $D(\P)=\Omega(n/\log n)$. 

The above is one of many distinct distances problems, most of which were also introduced by Erd\H os (for example, see Sheffer \cite{Sheffer14}). 
In a \emph{bipartite} distinct distances problem, we have two point sets $\pts_1$ and $\pts_2$ and are interested in the distinct distances spanned by the pairs of $\pts_1\times \pts_2$. 
We denote this number as $D(\pts_1,\pts_2)$. 
In one family of bipartite distinct distances problems, we assume that there exist  curves\footnote{Unless stated otherwise, by \emph{curves} we refer to constant-complexity irreducible one-dimensional varieties. See Section \ref{sec:prelim} for the technical definitions of these concepts.} $C_1,C_2$, such that $\pts_1\subset C_1$ and $\pts_2\subset C_2$. 
Purdy conjectured that, when $|\pts_1|=|\pts_2|=n$ and $C_1,C_2$ are lines that are neither parallel nor orthogonal,
$D(\pts_1,\pts_2)$ is superlinear in $n$ (for example, see \cite[Section 5.5]{BMP05}). 
Pach and de Zeeuw \cite{PdZ13} proved the following more general result. 

\begin{theorem}\label{th:PachDeZeeuw}
Let $C_1$ and $C_2$ be curves in $\R^2$ that are not parallel lines, orthogonal lines, or concentric circles. 
Let $\pts_1$ be a set of $m$ points in $C_1$ and let $\pts_2$ be a set of $n$ points in $C_2$.
Then 
\[ D(\pts_1,\pts_2) = \Omega(  \min\{m^{2/3}n^{2/3},m^2,n^2\}). \] 
\end{theorem}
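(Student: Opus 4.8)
The plan is to use the standard ``quadruples'' reduction that turns a distinct-distances bound into an incidence bound, following the Elekes-type paradigm. Write $t = D(\pts_1,\pts_2)$ and let $\Q$ be the set of quadruples $(a,b,c,d)\in \pts_1\times\pts_2\times\pts_1\times\pts_2$ with $|ab|=|cd|$. Grouping the $mn$ pairs of $\pts_1\times\pts_2$ according to their (at most $t$) distance values and applying the Cauchy--Schwarz inequality gives the lower bound $|\Q|\ge m^2n^2/t$. The whole game is then to establish the matching upper bound $|\Q|=O(m^{4/3}n^{4/3}+m^2+n^2)$; combining the two estimates and using the symmetry between $\pts_1$ and $\pts_2$ yields $t=\Omega(\min\{m^{2/3}n^{2/3},m^2,n^2\})$, since $m^2n^2/(m^{4/3}n^{4/3}+m^2+n^2)$ is, up to a constant, the minimum of $m^{2/3}n^{2/3}$, $m^2$ and $n^2$.

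To bound $|\Q|$ I would recast it as an incidence count on the surface $C_1\times C_1$. For each pair $(b,d)\in\pts_2\times\pts_2$ define the curve $\gamma_{b,d}=\{(x,y)\in C_1\times C_1 : |xb|=|yd|\}$; this is cut out of the two-dimensional variety $C_1\times C_1$ by a single equation of bounded degree, so it is an algebraic curve whose complexity is bounded in terms of the complexities of $C_1$ and $C_2$. By construction $|\Q|=\sum_{(b,d)}|\gamma_{b,d}\cap(\pts_1\times\pts_1)|$, i.e.\ the number of incidences between the $m^2$ points of $\pts_1\times\pts_1$ and the (multiset of) $n^2$ curves $\gamma_{b,d}$. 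The family $\{\gamma_{b,d}\}$ is parametrized by the two-dimensional variety $C_2\times C_2$, and a generic point $(x,y)\in C_1\times C_1$ imposes a single condition on $(b,d)$, so the family has two degrees of freedom. A Pach--Sharir-type incidence bound for bounded-complexity curves with two degrees of freedom then gives $I=O\bigl((m^2)^{2/3}(n^2)^{2/3}+m^2+n^2\bigr)$, which is exactly the estimate we need.

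The \textbf{main obstacle} is verifying the hypotheses of the incidence bound, and this is precisely where the excluded configurations enter. I must show that, outside the exceptional cases, two distinct curves $\gamma_{b,d},\gamma_{b',d'}$ meet in $O(1)$ points and, crucially, that only $O(1)$ pairs $(b,d)$ can yield the same curve --- otherwise the multiset of curves has unbounded multiplicity and the incidence bound becomes worthless. Expanding $|xb|^2-|yd|^2$ in a (local) parametrization $\phi$ of $C_1$, the identity $\gamma_{b,d}=\gamma_{b',d'}$ forces $\langle \phi(s),\, b-b'\rangle$ and the analogous pairing for $d-d'$ to be constant along $C_1$; if $C_1$ is not a line this pins down $b=b'$ and $d=d'$, whereas if $C_1$ is a line the degeneracy survives, and then imposing that $b,b',d,d'$ also lie on $C_2$ drives one into exactly the parallel-lines, orthogonal-lines, or concentric-circles scenarios (for concentric circles the curves $\gamma_{b,d}$ degenerate into unions of lines in the parameter plane that repeat along one-parameter families). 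Excluding these is precisely the content of the hypothesis; a parallel analysis, run on two generic points of $\gamma_{b,d}$, bounds the number of common points of two non-identical curves.

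A few technical points I would attend to. If $C_1$ is not rational it cannot be parametrized globally, so the degeneracy analysis and the incidence bound should be run on the surface $C_1\times C_1\subset\R^4$ directly, or after a generic projection to $\R^2$ that is finite-to-one on the surface and sends each $\gamma_{b,d}$ to a bounded-complexity plane curve while preserving incidences up to constants. One must also isolate the genuinely lower-dimensional ``bad'' part of $\Q$ coming from any degenerate or positive-multiplicity curves $\gamma_{b,d}$ and check that it contributes only $O(m^2+n^2)$, which can be absorbed. Finally, the argument should be carried out over $\R$ with the usual care that the relevant varieties may pick up complex or lower-dimensional components that do not affect the real incidence count.
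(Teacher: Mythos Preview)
The paper does not prove Theorem \ref{th:PachDeZeeuw}; it is quoted from Pach and de Zeeuw \cite{PdZ13} and used as a black box throughout. Your sketch is very much in the spirit of their original argument (Cauchy--Schwarz reduction to quadruples, then a Pach--Sharir incidence bound for the curves $\gamma_{b,d}\subset C_1\times C_1$), so the overall plan is the right one.

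Your degeneracy analysis, however, has a real gap. You assert that $\gamma_{b,d}=\gamma_{b',d'}$ forces $\langle \phi(s),\,b-b'\rangle$ to be constant along $C_1$, and then that ``if $C_1$ is not a line this pins down $b=b'$ and $d=d'$''. Neither step is justified: equality of the zero sets $\gamma_{b,d}$ and $\gamma_{b',d'}$ does not mean their defining polynomials coincide on all of $C_1\times C_1$, and the conclusion is simply false in the concentric-circles case, where $C_1$ is a circle, not a line. Indeed, if $C_1,C_2$ are circles of radii $r_1,r_2$ centred at the origin, then in angular coordinates $x=r_1(\cos\alpha_1,\sin\alpha_1)$, $b=r_2(\cos\beta_b,\sin\beta_b)$ one has $|xb|^2=r_1^2+r_2^2-2r_1r_2\cos(\alpha_1-\beta_b)$, so $\gamma_{b,d}$ is the union of the two lines $\alpha_1-\alpha_2=\beta_b-\beta_d$ and $\alpha_1+\alpha_2=\beta_b+\beta_d$. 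Every pair $(b,d)$ with a fixed value of $\beta_b-\beta_d$ contributes the same first component, so curve multiplicities are $\Theta(n)$ and the incidence bound collapses. Thus the concentric-circles exception does \emph{not} arise from the ``$C_1$ is a line'' branch of your dichotomy, and the heuristic that ``$C_1$ not a line'' already rules out high multiplicity is wrong. In \cite{PdZ13} this is handled by a more careful algebraic analysis of when two such curves can share a component, and that is exactly where the full list of exceptions (parallel lines, orthogonal lines, concentric circles) is produced.
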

When $C_1$ and $C_2$ are parallel lines, orthogonal lines, or concentric circles, we can choose $\pts_1$ and $\pts_2$ so that $D(\pts_1,\pts_2)=\Theta(m+n)$. It is conjectured that the bound of Theorem \ref{th:PachDeZeeuw} is not tight. 
Very recently, Solymosi and Zahl \cite{SolyZhal22} improved the bound for the case of two lines to $\Omega(\min\{m^{3/4}n^{3/4},m^2,n^2\})$.

We refer to a pair of curves $(C_1,C_2)$ as a \emph{configuration}.
A configuration \emph{spans many distances} if every 
finite $\pts_1\subset C_1$ and $\pts_2\subset C_2$ satisfy $$D(\pts_1,\pts_2)=\Omega(\min\{ |\pts_1|^{2/3}|\pts_2|^{2/3}, |\pts_1|^2, |\pts_2|^2 \}).$$
Theorem \ref{th:PachDeZeeuw} states that every configuration in $\R^2$ spans many distances, unless it consists of parallel lines, orthogonal lines, or concentric circles.  
We say that a configuration \emph{spans few distances} if for every $m$ and $n$ there exist $\P_1\subset C_1$ and $\P_2\subset C_2$ such that $|\pts_1|=m$, $|\pts_2|=n$, and  $D(\P_1,\P_2)=O(m+n)$. 
By the above, every two curves in $\R^2$ span either few or many distances. 
In other words, for any two curves that do not span many distances, there exist point sets that span a linear number of distinct distances.

 \begin{figure}[ht]
    \centering
    \begin{subfigure}[b]{0.25\textwidth}
    \centering
        \includegraphics[width=0.97\textwidth]{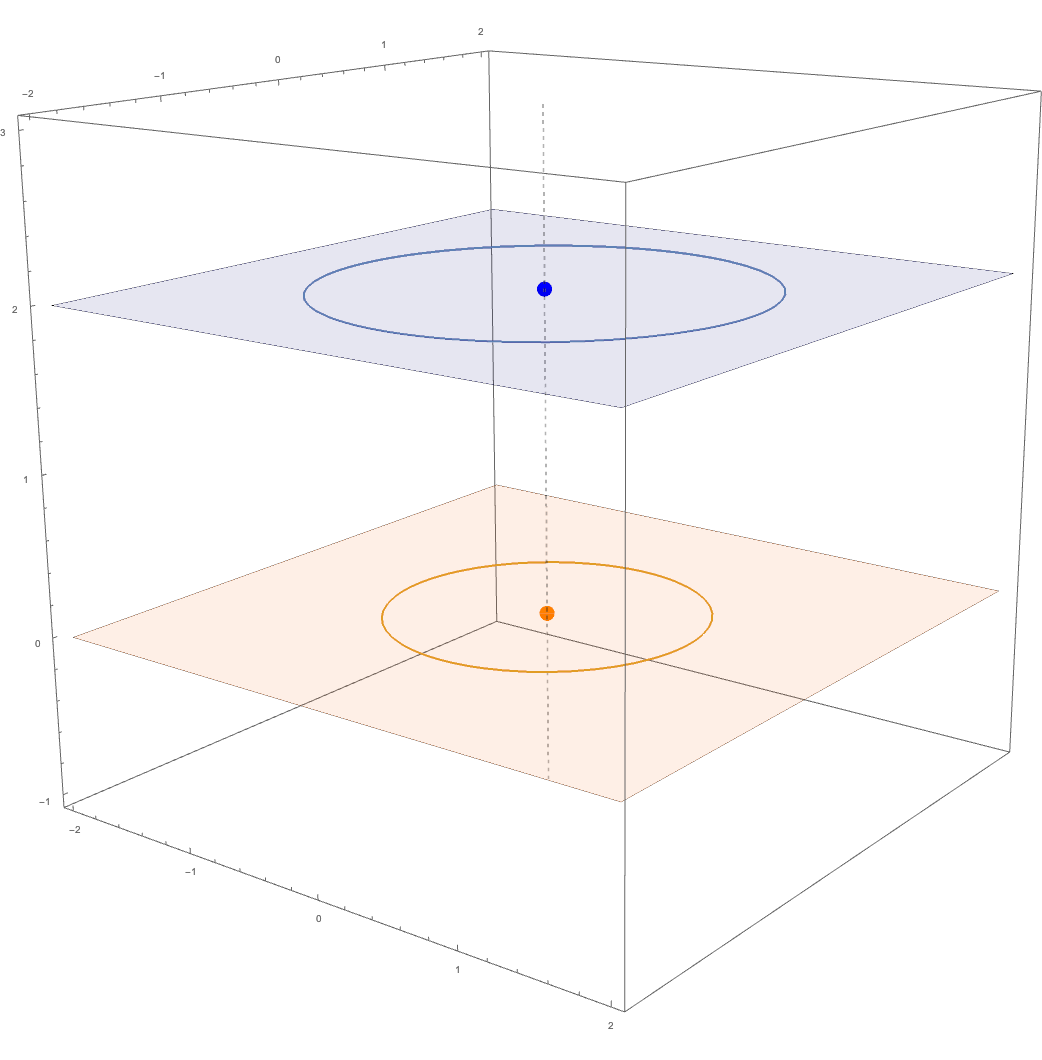}
        \caption{Aligned circles.}
    \end{subfigure}
    \hspace{1cm}
    \begin{subfigure}[b]{0.25\textwidth}
        \centering
        \includegraphics[width=\textwidth]{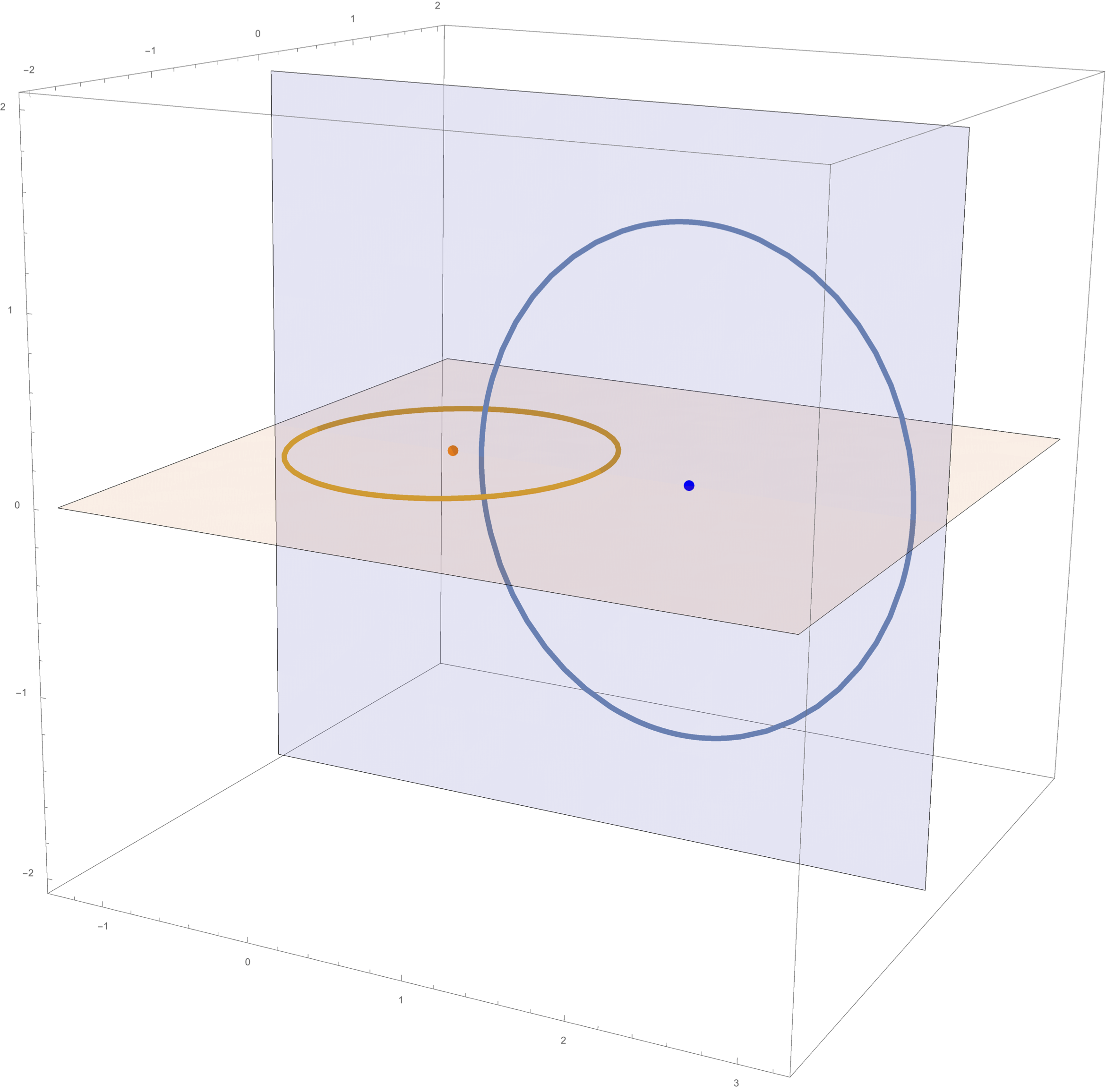}
        \caption{Perpendicular circles.}
    \end{subfigure}
    \caption{Pairs of circles that span few distances.}
    \label{fig:TwoCircles}
\end{figure}

Mathialagan and Sheffer \cite{MS20} studied the case where $C_1$ and $C_2$ are circles in $\R^3$. The \emph{axis} of a circle $C\subset \R^3$ is the line passing through the center of $C$ and orthogonal to the plane that contains $C$. Two circles are $\emph{aligned}$ if they share a common axis. See Figure \ref{fig:TwoCircles}(a). In some sense, the case of aligned circles in $\R^3$ generalizes the case of concentric circles in $\R^2$. In particular, by adapting the construction for concentric circles in $\R^2$, we get that aligned circles span few distances.
 
 Surprisingly, there exists another configuration of circles in $\R^3$ that spans few distances. 
 Two planes in $\R^3$ are \emph{perpendicular} if the angle between them is $\pi/2$.
 Let $H_1$ and $H_2$ be the planes that contain $C_1$ and $C_2$, respectively. We say that circles $C_1$ and $C_2$ are \emph{perpendicular} if  
\begin{itemize}[noitemsep,topsep=1pt]
    \item $H_1$ contains the center of $C_2.$
    \item $H_2$ contains the center of $C_1.$
    \item The planes $H_1$ and $H_2$ are perpendicular. 
\end{itemize}
See Figure \ref{fig:TwoCircles}(b).
Mathialagan and Sheffer \cite{MS20} proved the following theorem.
\begin{theorem}\label{th:ShefferMathialagan}
Let $C_1$ and $C_2$ be two circles in $\R^3$. \\
(a) Assume that $C_1$ and $C_2$ are aligned or perpendicular. Then there exists a set $\P_1\subset C_1$ of $m$ points and $\P_2\subset C_2$ of $n$ points, such that \[ D(\P_1,\P_2)=\Theta(m+n). \] 
(b) Assume that $C_1$ and $C_2$ are neither aligned nor perpendicular. Let $\P_1\subset C_1$ be a set of $m$ points and let $\P_2\subset C_2$ be a set of $n$ points. Then
\[
D(\P_1,\P_2)=\Omega(\min\{m^{2/3}n^{2/3},m^2,n^2\}).
\]
\end{theorem}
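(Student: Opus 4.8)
\emph{Part (a): the constructions.} Fix orthonormal frames for the planes containing $C_1$ and $C_2$ and parametrize the circles by angle, $p(\theta)\in C_1$ and $q(\phi)\in C_2$. A direct expansion of $|p(\theta)-q(\phi)|^2$ shows that for aligned circles the squared distance equals $\lambda-2r_1r_2\cos(\theta-\phi)$ for constants $\lambda,r_1,r_2$; hence it depends only on $\theta-\phi$. Placing the $m$ points of $\P_1$ at angles forming an arithmetic progression with common difference $\rho$ and the $n$ points of $\P_2$ at angles forming an arithmetic progression with the same common difference $\rho$, the values of $\theta-\phi$ lie in a set of size $O(m+n)$, so $D(\P_1,\P_2)=O(m+n)$. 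For perpendicular circles a slightly longer expansion, using the three defining conditions to cancel the terms that are linear in a single $\cos$ or $\sin$, shows that the squared distance can be written as $\mu-2r_1r_2\,X(\theta)\,Y(\phi)$ with $X(\theta)=\cos\theta-\alpha_1$ and $Y(\phi)=\cos\phi-\alpha_2$ for suitable constants $\mu,\alpha_1,\alpha_2$; hence it depends only on the product $X(\theta)Y(\phi)$. Choosing the angles of $\P_1$ so that $X$ runs over a geometric progression $\{\beta t^{i}\}_{i=1}^{m}$ and the angles of $\P_2$ so that $Y$ runs over the reciprocal progression $\{\beta' t^{-j}\}_{j=1}^{n}$ — possible inside the relevant intervals by taking $t=t(m,n)$ close to $1$ — the products $X(\theta)Y(\phi)$ take only $O(m+n)$ values, so again $D(\P_1,\P_2)=O(m+n)$. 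The matching lower bound $D(\P_1,\P_2)=\Omega(m+n)$ is immediate: a sphere centered at a point of $C_2$ meets the circle $C_1$ in at most two points, so for a fixed point of $\P_2$ every distance is attained by at most two points of $\P_1$ (and symmetrically, after discarding the at most two points of $\P_2$ that lie on the axis of $C_1$).

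\emph{Part (b): the lower bound.} Write $x=D(\P_1,\P_2)$ and consider the energy
\[
E=\bigl|\{(p,p',q,q')\in\P_1\times\P_1\times\P_2\times\P_2:\ |pq|=|p'q'|\}\bigr|.
\]
Grouping the $mn$ pairs of $\P_1\times\P_2$ by their distance and applying the Cauchy--Schwarz inequality gives $E\ge m^2n^2/x$. To bound $E$ from above, I would rationally parametrize $C_1$, so that $C_1\times C_1$ becomes a dense open subset of the plane, and associate with each ordered pair $(q,q')\in\P_2\times\P_2$ the curve $\gamma_{q,q'}=\{(p,p')\in C_1\times C_1:\ |pq|^2=|p'q'|^2\}$, an algebraic curve of bounded degree. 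Then $E$ is exactly the number of incidences between the $m^2$ points $\P_1\times\P_1$ and the $n^2$ curves $\{\gamma_{q,q'}\}$, and the plan is to feed this into a Szemer\'edi--Trotter-type incidence bound for bounded-degree curves — the same kind of tool underlying Theorem~\ref{th:PachDeZeeuw} — to obtain $E=O\bigl(m^{4/3}n^{4/3}+m^2+n^2\bigr)$. Combined with the energy lower bound this yields
\[
x=\Omega\!\left(\frac{m^2n^2}{m^{4/3}n^{4/3}+m^2+n^2}\right)=\Omega\bigl(\min\{m^{2/3}n^{2/3},m^2,n^2\}\bigr),
\]
as claimed.

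\emph{The crux.} For the incidence bound to be applicable, the family $\{\gamma_{q,q'}\}$ must be non-degenerate: only $O(1)$ pairs $(q,q')$ may yield the same curve, and two distinct curves of the family must share no common component, so that by B\'ezout they meet in $O(1)$ points. Writing $F(\theta,\phi)=|p(\theta)-q(\phi)|^2$ — an explicit bilinear trigonometric polynomial in $(\theta,\phi)$ — the curve $\gamma_{q,q'}$ is the zero set of $F(\theta_1,\phi_q)-F(\theta_2,\phi_{q'})$, and I expect one can show that the required non-degeneracy fails precisely when $F$ admits a decomposition $F(\theta,\phi)=h(\theta-\phi)$ or $F(\theta,\phi)=h\bigl(g_1(\theta)g_2(\phi)\bigr)$ — that is, exactly the additive or multiplicative structure exploited in part~(a). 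A case analysis on the coefficients of $F$, expressed through the radii, the centers, and the frames of the two planes, should then show that such a decomposition occurs if and only if $C_1$ and $C_2$ are aligned or perpendicular. (An essentially equivalent route is to make $F$ rational via the Weierstrass substitution and invoke an Elekes--R\'onyai-type theorem for the value set $F(S\times T)$, with the same geometric classification of the exceptional cases emerging.)

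I expect this last step — proving the non-degeneracy lemma and identifying its failure with the two excluded configurations — to be the main obstacle. One must show not merely that \emph{generic} circles give a good curve family, but that \emph{every} pair of circles that is neither aligned nor perpendicular does, which forces a careful treatment of the edge cases (coplanar circles, a circle meeting the other's axis, equal radii, and so on); and one must also verify that the residual degeneracies — a bounded-degree subfamily of curves through a fixed point, or curves passing through the points at infinity discarded in the rational parametrization — do not spoil the incidence count. Once the non-degeneracy lemma is established, the incidence bound and the Cauchy--Schwarz step combine immediately to give the stated bound.
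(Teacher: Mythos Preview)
This theorem is not proved in the present paper: it is quoted from Mathialagan--Sheffer \cite{MS20}, and in Lemma~\ref{le:EightConicCases} part~(a) the paper simply cites that reference. That said, the paper does contain the ingredients that would reprove it, and your outline lines up with those ingredients quite well.

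For part~(a) your constructions are correct and match what is done here and in \cite{MS20}. The aligned case is exactly Lemma~\ref{le:AlignedCirclesFewDistances}. For the perpendicular case, your product form $\mu-2r_1r_2X(\theta)Y(\phi)$ is the construction of \cite{MS20}; in the language of this paper it becomes, after the substitutions $s=\ln X(\theta)$ and $t=\ln Y(\phi)$, the special form $\phi_3(s+t)$ of \eqref{eq:SpecialForm}, and indeed the paper subsumes perpendicular circles into the matching log-circles of Theorem~\ref{th:LogCirclesFewDistances} for precisely this reason.

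For part~(b) your energy--incidence framework is the same as the paper's: your Cauchy--Schwarz step and the incidence bound on quadruples are exactly the argument in Case~(i) of the proof of Lemma~\ref{le:SpecialFormOrManyDist}, and the Szemer\'edi--Trotter-type input you anticipate is supplied there by the Raz--Sharir--de~Zeeuw Theorem~\ref{th:CartesianProduct}. The real content, as you correctly isolate, is your ``crux'': ruling out the degenerate (special-form) case unless the circles are aligned or perpendicular. Here the paper offers a more systematic tool than the direct coefficient-by-coefficient case analysis you propose --- the derivative test of Lemma~\ref{le:DerivativeTest}, which reduces the question to checking that $\partial^2_{st}\ln|\rho_s/\rho_t|$ does not vanish identically. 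In \cite{MS20} this test is what is actually used for circles, and it tames the ``edge cases'' you worry about (coplanar circles, axis incidences, equal radii) far more cleanly than trying to verify the two incidence-geometric hypotheses by hand. Your alternative suggestion of invoking an Elekes--R\'onyai-type theorem is essentially the same route: Theorem~\ref{th:CartesianProduct} is exactly that, and its Case~(ii) is what forces the special form.

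So: no genuine error in your plan, and the overall architecture matches the paper's. The gap you flag is real but is precisely the one the paper's machinery (Lemmas~\ref{le:SpecialFormOrExpands} and~\ref{le:DerivativeTest}) is built to close; if you were to complete the proof, I would recommend that route over an ad hoc non-degeneracy check on the curve family $\{\gamma_{q,q'}\}$.
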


Theorem \ref{th:ShefferMathialagan} states that two circles in $\R^3$ span many distances, unless they are aligned or perpendicular, in which case then they span few distances. 

\parag{Our first result: conics.} Continuing where Mathialagan and Sheffer stopped, we establish two new results for distinct distances between curves in $\R^3$. Our first result generalizes Theorem \ref{th:ShefferMathialagan} to  all non-degenerate conic sections. That is, each curve may now be a circle, a parabola, a hyperbola, or an ellipse.
To state this result, we first need to define a few new configurations.

The \emph{axis} of a parabola is the line incident to both the vertex and focus of that parabola. For example, see Figure \ref{fig:TwoParabolas}(a).
We say that parabolas $C_1,C_2$ in $\R^3$ are \emph{congruent and perpendicularly opposite} if they satisfy:
\begin{itemize}[noitemsep,topsep=1pt]
    \item We can obtain $C_2$ by translating and rotating $C_1$.
    \item The parabolas $C_1$ and $C_2$ have the same axis.
    \item The planes that contain $C_1$ and $C_2$ are perpendicular. 
    \item For each parabola, consider the vector from its vertex to its focus. These two vectors have opposite directions. 
\end{itemize}
For example, see Figure \ref{fig:TwoParabolas}(b).
For short, we refer to such a pair of parabolas as \emph{CPO parabolas}.

 \begin{figure}[ht]
    \centering
    \begin{subfigure}[b]{0.2\textwidth}
    \centering
        \includegraphics[width=0.97\textwidth]{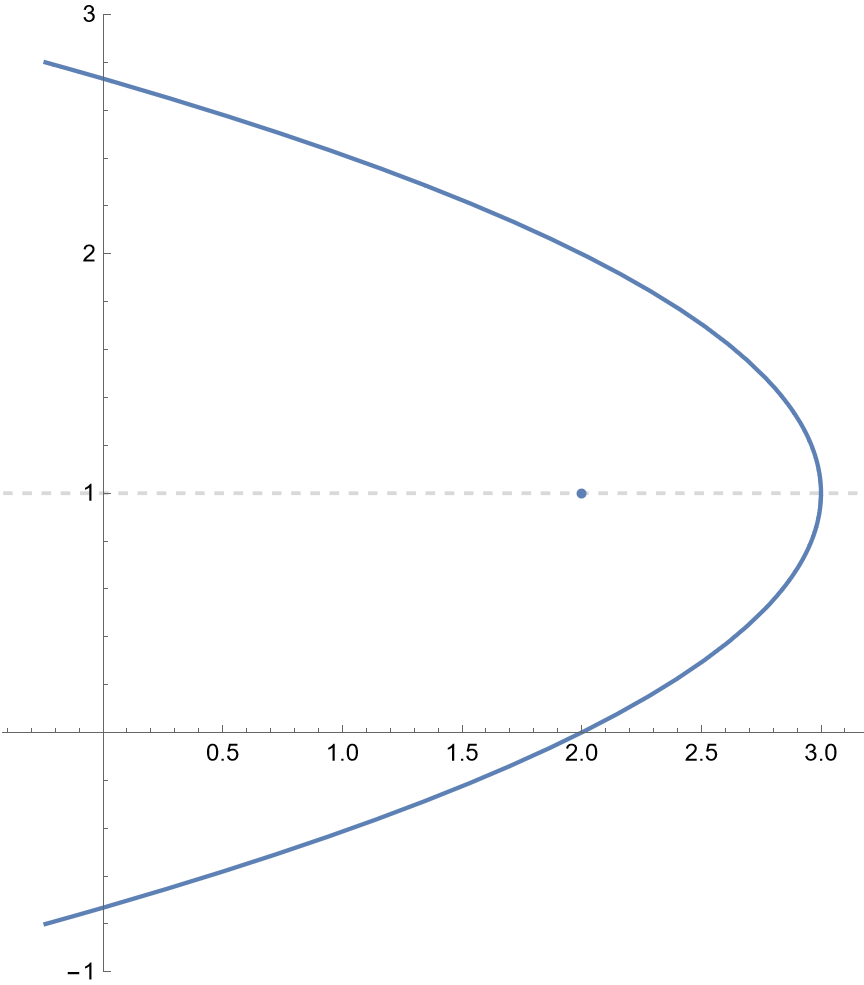}
        \caption{The axis of a parabola.}
    \end{subfigure}
    \hspace{1cm}
    \begin{subfigure}[b]{0.3\textwidth}
        \centering
        \includegraphics[width=\textwidth]{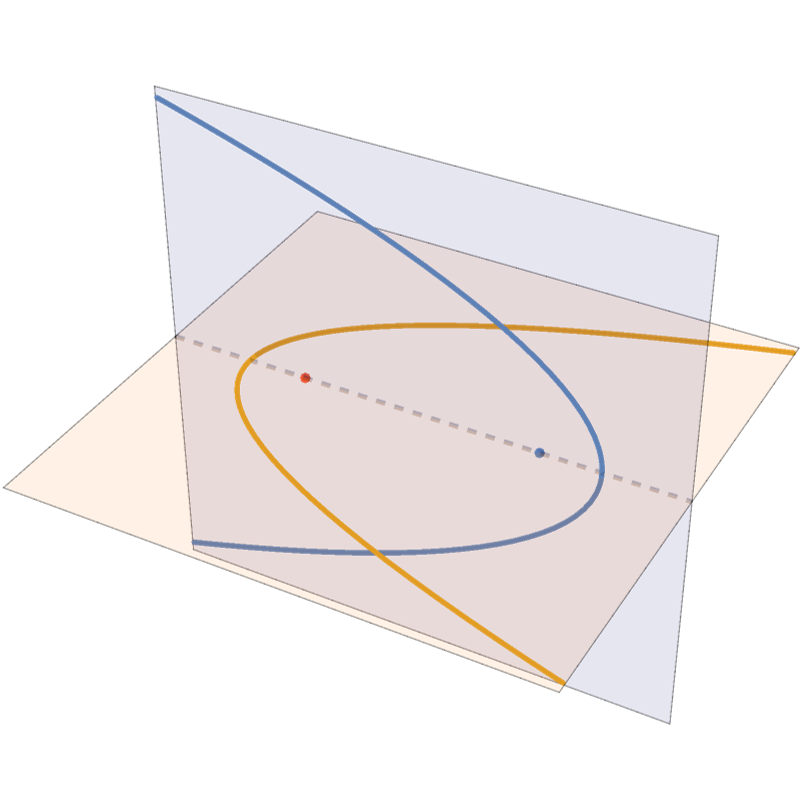}
        \caption{CPO Parabolas.}
    \end{subfigure}
    \caption{Congruent and perpendicularly opposite (CPO) parabolas.}
    \label{fig:TwoParabolas}
\end{figure}

For $a,m> 0$ and $b\neq 0$, such that $m\neq 1$, we consider the configuration 
\begin{align}\label{FirstConic}
C_1&=\left\{(x,y,z)\in \R^3\ :\ mx^2+y^2=a, \ z=0\right\} \quad \text{ and } \\
C_2&=\left\{(x,y,z)\in \R^3\ :\ \frac{m}{m-1}x^2+z^2=b, \ y=0\right\}. \label{SecondConic}
\end{align}
We note that $C_1$ is an ellipse for every $m>0$.
On the other hand, $C_2$ is an ellipse when $m>1$ and a hyperbola when $0<m<1$. 
We ignore the case where $b<0$ and $m>1$, since then $C_2$ is empty. 
Two curves \emph{match} if they are obtained by rotating and translating the configuration of \eqref{FirstConic} and \eqref{SecondConic}, for some values of $a,b,m$.
That is, two ellipses may match (Figure \ref{fig:EllipsesHyperbolas}(a)) and so do one ellipse and one hyperbola (Figure \ref{fig:EllipsesHyperbolas}(b)).
We can scale  \eqref{FirstConic} and \eqref{SecondConic} by changing the values of $a,b,m$.

 \begin{figure}[ht]
    \centering
    \begin{subfigure}[b]{0.27\textwidth}
    \centering
        \includegraphics[width=0.97\textwidth]{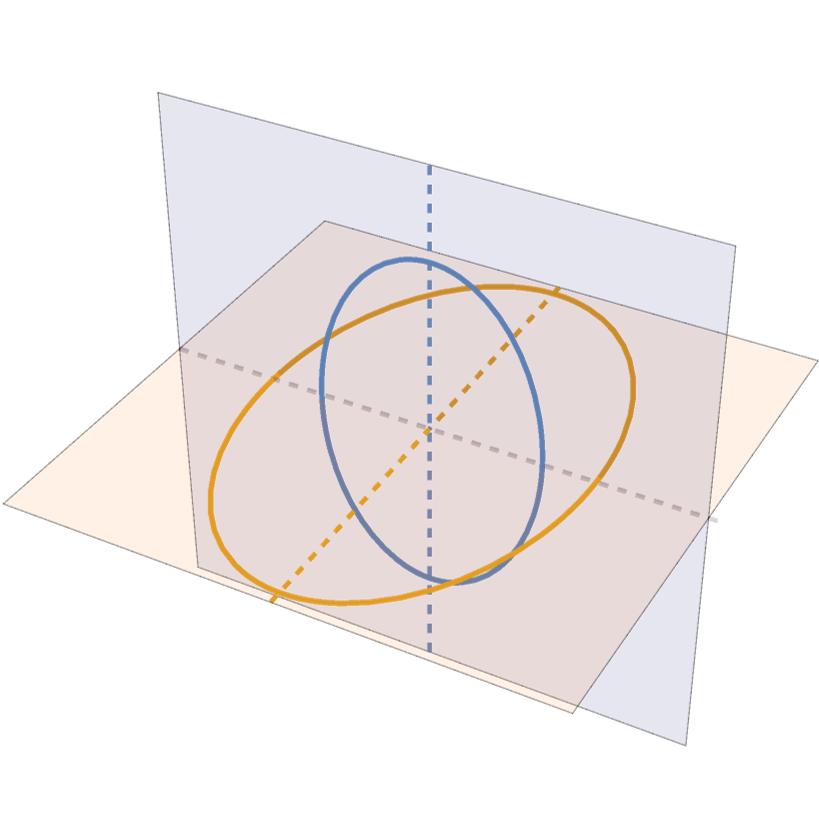}
        \caption{Two Ellipses.}
    \end{subfigure}
    \hspace{1cm}
    \begin{subfigure}[b]{0.25\textwidth}
        \centering
        \includegraphics[width=\textwidth]{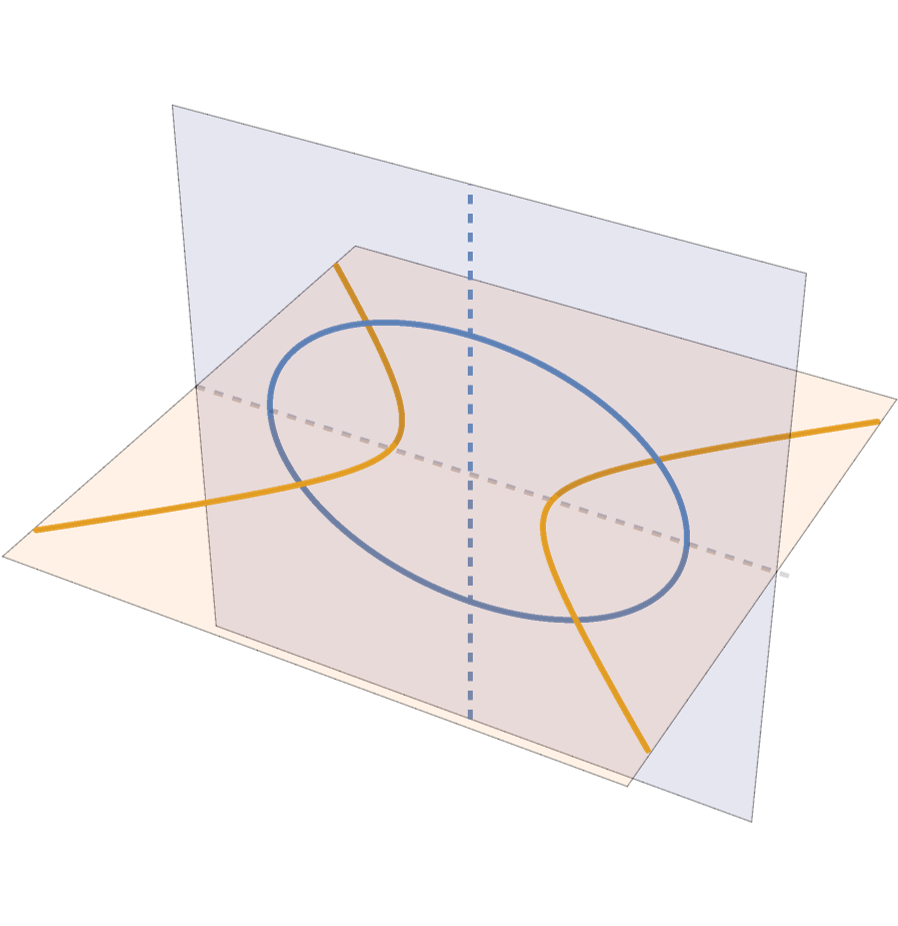}
        \caption{Ellipse and Hyperbola.}
    \end{subfigure}
    \caption{Matching curves.}
    \label{fig:EllipsesHyperbolas}
\end{figure}
\begin{theorem}[Few distances between conics]\label{th:FewDistancesBetweenConics}
Let $C_1,C_2\subset \R^3$ be non-degenerate conic sections. \\[2mm]
(a) Assume that $C_1$ and $C_2$ are either perpendicular circles, aligned circles, CPO parabolas, or matching curves. Then $(C_1,C_2)$ spans few distances. \\[2mm]
(b) If $(C_1,C_2)$ is not one of the configurations from part (a), then $(C_1,C_2)$ spans many distances.
\end{theorem}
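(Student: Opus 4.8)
For perpendicular circles and for aligned circles the desired point sets are already provided by Theorem~\ref{th:ShefferMathialagan}(a), so we only need new constructions for CPO parabolas and for matching curves, and in both cases the recipe is the same: choose adapted coordinates, parametrize the two curves, compute the squared distance between a point of $C_1$ and a point of $C_2$, and observe that it is a univariate function of a single combination of the two parameters. For CPO parabolas, put the common axis on the $y$-axis and write the points as $(s,\ s^2/(4p),\ 0)\in C_1$ and $(0,\ v-t^2/(4p),\ t)\in C_2$; a short computation gives squared distance $\sigma^2+(4p-2v)\sigma+v^2$ with $\sigma=(s^2+t^2)/(4p)$, so taking $s_i=\sqrt{i}$ for $1\le i\le m$ and $t_j=\sqrt{j}$ for $1\le j\le n$ forces $\sigma$ to take only $m+n-1$ values and hence $D(\P_1,\P_2)\le m+n-1$. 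For matching curves, parametrize $C_1$ and $C_2$ by their $x$-coordinates $x_1,x_2$; substituting the defining equations \eqref{FirstConic} and \eqref{SecondConic} collapses the squared distance to $a+b\mp L^2$ for an explicit linear form $L=c_1x_1+c_2x_2$, where the sign and the coefficients depend only on whether $m>1$ or $m<1$. Choosing $x_1,x_2$ from arithmetic progressions with common differences proportional to $c_2$ and $c_1$ respectively --- after a translation, if needed, to stay on the correct arc or branch, and with small enough step so that all chosen points lie on the curves --- makes $L$ take $O(m+n)$ values, so again $D(\P_1,\P_2)=O(m+n)$. Since ``spans few distances'' only asks for such an upper bound for each $m,n$, part (a) follows.

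\smallskip
\noindent\textbf{Part (b): reduction to a bivariate distinct-values problem.}
If $C_1$ and $C_2$ are coplanar, they lie in a common $2$-plane isometric to $\R^2$, and since non-degenerate conics are not parallel or orthogonal lines, Theorem~\ref{th:PachDeZeeuw} says that $(C_1,C_2)$ spans many distances unless the two conics are concentric circles; but concentric circles share the axis through their common center perpendicular to their plane, so that exceptional case is exactly ``aligned circles''. Hence we may assume $C_1,C_2$ are not coplanar. Fix bounded-degree rational parametrizations $p(s)=(p_1(s),p_2(s),p_3(s))$ of $C_1$ and $q(t)=(q_1(t),q_2(t),q_3(t))$ of $C_2$ --- a half-angle substitution for a circle or ellipse, a polynomial parametrization for a parabola, the usual rational one for a hyperbola --- each birational onto its image. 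Given $\P_1\subset C_1$ and $\P_2\subset C_2$, pick parameter sets $S,T$ with $|S|\ge|\P_1|-O(1)$ and $|T|\ge|\P_2|-O(1)$ realizing these points; since the distance between the points labeled by $s$ and $t$ equals $\sqrt{Q(s,t)}$ with $Q(s,t)=\sum_{i=1}^{3}(p_i(s)-q_i(t))^2$, we get $D(\P_1,\P_2)\ge|Q(S\times T)|$, where $Q$ is a bounded-degree rational function that genuinely depends on both variables. By a quantitative Elekes--R\'onyai-type theorem for rational functions, either $|Q(S\times T)|=\Omega(\min\{|S|^{2/3}|T|^{2/3},|S|^2,|T|^2\})$ for all finite $S,T$, in which case $(C_1,C_2)$ spans many distances and we are done, or $Q$ has one of the special forms $Q=h(f(s)+g(t))$ or $Q=h(f(s)g(t))$ for univariate rational $f,g,h$.

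\smallskip
\noindent\textbf{Part (b): the classification, and the main obstacle.}
It remains to prove that if the squared-distance function $Q$ of two non-coplanar non-degenerate conics has a special form, then $(C_1,C_2)$ is a pair of perpendicular circles, aligned circles, CPO parabolas, or matching curves; this classification is the crux of the theorem. Both special forms imply the differential identity $\partial_s\partial_t\log(Q_s/Q_t)\equiv 0$; clearing denominators turns it into a polynomial identity in $s,t$ whose coefficients are polynomials in the data of the two conics --- the semi-axes, the coordinates of the centers, vertices and foci, and the mutual angles of the two planes and of the two axes. One then runs a finite case analysis over the unordered pair of conic types (circle/ellipse, parabola, hyperbola) and over the relative position of the two planes: the identity first forces the planes to be parallel or perpendicular, and then, in the parallel case it forces aligned circles, while in the perpendicular case it forces perpendicular circles, CPO parabolas, or matching curves (the perpendicular case also follows from our separate characterization of when curves contained in perpendicular planes span few distances). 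I expect the hard part to be exactly this classification: there are many ``near-miss'' configurations to eliminate, and the algebra must be organized so as to stay tractable, for instance by exploiting the rigidity of the cross term $-2\langle p(s),q(t)\rangle$ together with the fact that $|p(s)|^2$ and $|q(t)|^2$ are of low degree. By comparison, the constructions of part (a) and the reduction above are routine.
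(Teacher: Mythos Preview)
Your overall framework matches the paper's: part (a) via explicit special-form computations of the squared distance (the paper packages this as Lemma~\ref{le:SpecialFormFewDistances} together with Lemmas~\ref{le:AlignedCirclesFewDistances}--\ref{le:MatchingEllipseHyperbolaFewDistances}), and part (b) via an Elekes--R\'onyai / Elekes--Szab\'o dichotomy (the paper's Lemma~\ref{le:SpecialFormOrExpands}) followed by the derivative test $\partial_s\partial_t\log(\rho_s/\rho_t)\equiv 0$ and a case analysis on coefficients. So the skeleton is right, and you correctly flag the classification as the crux.

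However, your proposed organization of that classification contains a genuine gap. You write that the polynomial identity ``first forces the planes to be parallel or perpendicular,'' after which you invoke the parallel-planes lemma and the perpendicular-planes theorem. But this intermediate claim --- that special form for the distance function of two conics forces their containing planes to be parallel or perpendicular --- is not something you prove, and the paper does \emph{not} establish it as a separate step either. In fact, for general planar curves this is precisely the open problem the paper poses in the introduction; for conics it is only known \emph{a posteriori}, as a consequence of the full classification. So your argument as written is circular at this point: you would need the classification to justify the reduction you are using to avoid the classification.

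The paper's route is more brute-force but honest: it splits directly into the eight unordered pairs of conic types (Lemma~\ref{le:EightConicCases}), parametrizes $C_1$ in a normalized form and $C_2$ via a center $(p,q,r)$ and two vectors $\mathbf{v}_1,\mathbf{v}_2$, writes out the numerator $\rho_N(s,t)$ of $\partial_s\partial_t\log(\rho_s/\rho_t)$, and then uses Mathematica to show that the simultaneous vanishing of all coefficients of $\rho_N$ forces exactly the listed exceptional configurations. The two-ellipse case alone is a lengthy branching analysis (Appendix~\ref{app:Wolfram}). There is no shortcut through an a priori angle constraint; the ``near-miss'' eliminations you anticipate are handled one by one, computer-assisted. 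Your sketch would become a proof only after carrying out this computation --- or after independently proving the parallel/perpendicular dichotomy for conics, which is not obviously easier.
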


We note that, as in the cases of $\R^2$ and circles in $\R^3$, every conic  configuration spans either few or many distinct distances. 

Theorem \ref{th:FewDistancesBetweenConics} does not include the case where at least one of the curves is a line. This case is significantly simpler to study, as shown in the proof of the following result.
Throughout this work, the word \emph{cylinder} refers specifically to right circular cylinders. 
That is, cylinders are the surfaces that are obtained by translating, rotating, and scaling $\vb(x^2+y^2-1)$. 
The \emph{axis} of a cylinder $S$ is the common axis of all the circles that are contained in $S$.

\begin{lemma}\label{le:LineAlgebraic}
Let $\ell \subset \R^3$ be a line and let $C\subset \R^3$ be a curve. \\[2mm]
(a) If $C$ is contained in a plane 
orthogonal to $\ell$ or in a cylinder centered around $\ell$, then $(C,\ell)$ spans few distances. \\[2mm]
(b) If $(C,\ell)$ is not one of the configurations from part (a), then $(C,\ell)$ spans many distances.
\end{lemma}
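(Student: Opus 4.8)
\textbf{Proof proposal for Lemma \ref{le:LineAlgebraic}.}

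The plan is to prove part (a) by explicit constructions and part (b) by reducing to the planar Theorem \ref{th:PachDeZeeuw} via a well-chosen projection. For part (a), I would set up coordinates so that $\ell$ is the $z$-axis. If $C$ lies in a plane orthogonal to $\ell$, say $z=c$, then for any two points $p=(x_1,y_1,c)$ and $q=(x_2,y_2,c)$ with $p\in\ell$... wait, more carefully: take $\pts_1\subset C$ and $\pts_2\subset\ell$. Every point of $\ell$ has the form $(0,0,t)$, and the squared distance from $(0,0,t)$ to a point $(x,y,c)\in C$ is $x^2+y^2+(t-c)^2$. Choosing $\pts_1$ so that all its points have the \emph{same} value of $x^2+y^2$ (i.e., lie on a common circle around $\ell$ inside the plane $z=c$) — this is possible for at most finitely many such circles unless $C$ is itself such a circle, so instead I place $\pts_1$ freely on $C$ and absorb the finitely many values $x^2+y^2$ can take... no: the right move is the classical one. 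Put $\pts_2 = \{(0,0,t_i)\}$ with the $t_i$ chosen so that $(t_i-c)^2$ takes $O(n)$ values, and put $\pts_1$ anywhere on $C$ with $x^2+y^2$ taking $O(m)$ values (automatic since there are $m$ points); then squared distances lie in a sumset of size $O(m)\cdot O(n)$ — that is $O(mn)$, not $O(m+n)$. The fix is to make $x^2+y^2$ constant on $\pts_1$: choose the $t_i$ arbitrarily but the points of $\pts_1$ all on one circle of $C\cap\{z=c\}$ — impossible in general. The correct construction: scale the $m$ points on $C$ to lie at radii $r_1,\dots,r_m$ and the line points at heights so that $r_j^2 + h_i^2$ takes few values; taking $h_i^2 = a - r_?$... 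Let me restate cleanly below.

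For the plane-orthogonal case, take $\ell$ the $z$-axis and $C\subset\{z=0\}$. For a point $(x,y,0)\in C$ write $\rho = x^2+y^2$; as the point ranges over $C$, $\rho$ takes infinitely many values, and we pick $\pts_1$ to realize $m$ values $\rho_1<\cdots<\rho_m$ forming an arithmetic-progression-like set, say $\rho_j = j$ for $j=1,\dots,m$ (rescale $C$ if needed; this only needs $C$ not to be a single circle, and if $C$ \emph{is} a circle orthogonal to $\ell$ the result is immediate by aligned-circles-type reasoning or directly). Then take $\pts_2 = \{(0,0,t_i)\}$ with $t_i^2 = i$ for $i=1,\dots,n$. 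The squared distance between the $j$-th curve point and the $i$-th line point is $\rho_j + t_i^2 = i+j \in \{2,\dots,m+n\}$, giving $D(\pts_1,\pts_2)=O(m+n)$. For the cylinder case, take the cylinder $x^2+y^2=1$ around $\ell=z$-axis; a point on $C$ has the form $(\cos\theta,\sin\theta,w)$ and a point on $\ell$ is $(0,0,t)$, with squared distance $1 + w^2 + (t-w)^2 = 1 + 2w^2 - 2wt + t^2$ — the $wt$ cross term is the obstruction, so instead exploit that for points \emph{on the cylinder} the distance to the axis point $(0,0,t)$ equals $1 + (w-t)^2$ only if... no: distance from $(0,0,t)$ to $(\cos\theta,\sin\theta,w)$ squared is $\cos^2\theta+\sin^2\theta+(w-t)^2 = 1+(w-t)^2$, independent of $\theta$. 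So choose $\pts_1$ on $C$ with $z$-coordinates $w_1,\dots,w_m$ and $\pts_2$ on $\ell$ with $z$-coordinates $t_1,\dots,t_n$ chosen so that $\{(w_j-t_i)^2\}$ has size $O(m+n)$ — e.g. $w_j=j$, $t_i=-i$, giving $(i+j)^2$, at most $m+n$ values; this needs $C$ to meet infinitely many horizontal levels of the cylinder, which holds unless $C$ is a single circle of the cylinder, a case again handled directly.

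For part (b), suppose $C$ is neither contained in a plane orthogonal to $\ell$ nor in a cylinder around $\ell$. Set $\ell$ to be the $z$-axis. The key observation is that the squared distance from $(0,0,t)\in\ell$ to $p=(x,y,z)\in\R^3$ depends only on $t$, on $z$, and on $r^2 := x^2+y^2$. So consider the projection $\pi:\R^3\to\R^2$ sending $(x,y,z)\mapsto(z, x^2+y^2)$ — equivalently, work in the $(z,r^2)$-plane where $\ell$ maps to the curve $C_1' = \{(t,0)\}$ (the $z$-axis of the target plane) and $C$ maps to some curve $C_2' = \pi(C)$. One checks that for $u=(z_1,r_1^2)$ the image of a line point and $p'=(z_2,r_2^2)$ the image of a curve point, the squared distance in $\R^3$ equals $(z_1-z_2)^2 + r_2^2 = $ (a function computable in the target plane), and in fact equals the squared Euclidean distance in the target plane between $(z_1,0)$ and the point $(z_2, ?)$ after a further area-preserving or at least distance-compatible change — more precisely, the map that matters is $(x,y,z)\mapsto(z,\sqrt{x^2+y^2})$, under which the $3$-D distance from an axis point to $p$ equals the $2$-D distance from the corresponding axis point to the image of $p$. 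Thus $D(\pts_1,\pts_2)$ in $\R^3$ equals $D$ of the images in $\R^2$, where one image set lies on a line $\ell'$ and the other on the curve $\gamma := \{(z,\sqrt{x^2+y^2}) : (x,y,z)\in C\}$. Now invoke Theorem \ref{th:PachDeZeeuw} in $\R^2$: this gives many distances unless $\gamma$ and $\ell'$ are parallel lines, orthogonal lines, or concentric circles with $\ell'$ — but $\ell'$ is a line, so only "parallel to $\ell'$" or "orthogonal to $\ell'$" can occur (a line and a circle are never a bad pair for Theorem \ref{th:PachDeZeeuw}). If $\gamma$ is a line parallel to $\ell'$ (the $z$-axis image), then $\sqrt{x^2+y^2}$ is constant on $C$, i.e. $C$ lies on a cylinder around $\ell$ — excluded. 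If $\gamma$ is a line orthogonal to $\ell'$, then $z$ is constant on $C$, i.e. $C$ lies in a plane orthogonal to $\ell$ — excluded. (Some care is needed because $\pi$ may be finite-to-one, e.g. two-to-one generically, but that only changes point counts by constant factors and cannot decrease the number of distances; also $\gamma$ is a constant-complexity curve since $C$ is, so Theorem \ref{th:PachDeZeeuw} applies. If $\gamma$ is reducible or $\pi(C)$ degenerates to a point, then $C$ lies in a fixed plane $z=c$ intersected with a fixed cylinder, hence in finitely many circles, and either the plane is orthogonal to $\ell$ or we get a contradiction with irreducibility and constant complexity.)

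The step I expect to be the main obstacle is \textbf{justifying the reduction rigorously} — namely that the nonlinear projection $(x,y,z)\mapsto(z,\sqrt{x^2+y^2})$ sends the irreducible constant-complexity curve $C$ to an object to which Theorem \ref{th:PachDeZeeuw} genuinely applies (it is semialgebraic of constant complexity but perhaps not irreducible, and the square-root makes it only piecewise algebraic), and that the "bad" outcomes of Theorem \ref{th:PachDeZeeuw} pull back exactly to the two excluded cases. Handling the fiber multiplicity of $\pi$ and the possibility that $\pi(C)$ is lower-dimensional or lands inside $\{r^2 < 0\}$-type artifacts (it never does, but one must say so) is the bookkeeping that makes this "significantly simpler" claim actually hold. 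A cleaner alternative that avoids the square root: use the genuinely polynomial map $(x,y,z)\mapsto(z, x^2+y^2)$ into $\R^2$ and observe that $3$-D squared distance from $(0,0,t)$ to $p$ is $(z-t)^2 + r^2$, which is an affine function of the target coordinate $r^2$ plus a square of a difference of $z$-coordinates; then reduce instead to a distinct-\emph{values-of-a-bilinear-form} problem and cite the Elekes–Rónyai / Pach–de Zeeuw machinery directly rather than Theorem \ref{th:PachDeZeeuw} verbatim — I would present whichever of these two routes has the shortest rigorous write-up.
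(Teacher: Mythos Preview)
Your approach is essentially the paper's: for part (a) you give direct constructions equivalent to the special-form arguments the paper packages as Lemmas \ref{le:LineCylinderFewDistances} and \ref{le:LineOrthogonalPlaneFewDistances}; for part (b) you reduce to the planar Theorem \ref{th:PachDeZeeuw} via the map $(x,y,z)\mapsto(z,\sqrt{x^2+y^2})$, exactly as the paper does.

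You correctly flag the one real obstacle --- that the square root makes the image only semialgebraic, not obviously a constant-complexity algebraic curve --- but you do not resolve it. The paper's device is worth knowing: instead of taking a square root, lift to $\R^4$ with coordinates $(X,Y,Z,W)$ and form the genuine variety $U=\mathbf{V}\bigl(W^2-X^2-Y^2,\ f(X,Y,Z)\bigr)$, where $C=\mathbf{V}(f)$; then the \emph{linear} projection $(X,Y,Z,W)\mapsto(Z,W)$ has image $\pi_2(U)$ whose Zariski closure is a one-dimensional variety of bounded complexity, and Theorem \ref{th:PachDeZeeuw} applies to each irreducible component. Your polynomial alternative $(x,y,z)\mapsto(z,x^2+y^2)$ does not preserve Euclidean distance, so it would force you off Theorem \ref{th:PachDeZeeuw} and into heavier machinery; the $\R^4$ lift avoids that.

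One small correction: your sentence ``If $\gamma$ is reducible \dots\ then $C$ lies in a fixed plane $z=c$ intersected with a fixed cylinder'' is not right --- reducibility of the image says nothing of the sort. What you actually need (and what the paper argues) is that no \emph{irreducible component} of $\overline{\gamma}$ is a line parallel or orthogonal to $\ell'$. If such a component existed, infinitely many points of $C$ would lie on the corresponding cylinder or orthogonal plane, and then irreducibility of $C$ (via Corollary \ref{co:FiniteOrContained}) forces $C$ itself into that cylinder or plane, contradicting the hypothesis of part (b).
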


For another interesting variant where one point set is on a line, see Bruner and Sharir \cite{BS16}.

\parag{Our second result: curves on perpendicular planes.} 
We say that a curve in $C\subset \R^3$ is \emph{planar} if there exists a plane that contains $C$. 
Theorem \ref{th:PachDeZeeuw}
classifies the pairs of planar curves that span few distances, when the containing planes are parallel. In particular, the only configurations that span few distances in this case are parallel lines, orthogonal lines, and aligned circles. See Lemma \ref{le:ParallelPlanes} below. 

Our second result classifies all pairs of planar curves that span few distances when the containing planes are perpendicular. 

\begin{theorem}[Few distances between perpendicular planes]\label{th:PerpendicularPlanesFewDistances}
Let $C_1$ and $C_2$ be curves that are contained in perpendicular planes in $\R^3$. \\[2mm]
(a) Assume that $C_1$ and $C_2$ are one of the following: 
\begin{itemize}[noitemsep,topsep=0pt]
    \item a line $\ell$ and a curve contained in a plane orthogonal to $\ell$, 
    \item parallel lines, 
    \item a line $\ell$ and an ellipse contained in a cylinder centered around $\ell$, 
    \item CPO parabolas, 
    \item matching curves, 
    \item perpendicular circles.
\end{itemize}
Then $(C_1,C_2)$ spans few distances. \\[2mm]
(b) If $(C_1,C_2)$ is not one of the configuration from part (a), then $(C_1,C_2)$ spans many distances.
\end{theorem}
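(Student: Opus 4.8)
The plan is to reduce everything to a rigidity statement about bivariate functions that take few values on grids, apply an Elekes--R\'onyai-type dichotomy, and translate the algebraic conclusion back into geometry.

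\textbf{Part (a).} Every listed configuration spans few distances, and only ``parallel lines'' needs a genuinely new argument. The two line-and-curve items are instances of Lemma~\ref{le:LineAlgebraic}(a): once one curve is a line the perpendicular-planes hypothesis imposes nothing extra, since a plane orthogonal to $\ell$ (or a cylinder about $\ell$) can always be positioned so as to be perpendicular to some plane through $\ell$. CPO parabolas, matching curves, and perpendicular circles span few distances by Theorem~\ref{th:FewDistancesBetweenConics}(a), and each is checked to lie in perpendicular planes directly from the definitions (for matching curves, the planes $\{z=0\}$ and $\{y=0\}$ of \eqref{FirstConic}--\eqref{SecondConic} are perpendicular). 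For parallel lines, put arithmetic progressions with a common difference on the two lines; the squared distance between the $i$-th point of one and the $j$-th point of the other is $(i-j)^2c^2+c_0$, which takes $O(m+n)$ values.

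\textbf{Part (b), setup.} Assume $(C_1,C_2)$ does not span many distances; we show it is one of the configurations of part (a). Choose coordinates so that the perpendicular planes are $H_1=\{z=0\}$ and $H_2=\{y=0\}$, meeting along the $x$-axis $\ell_0$. For $p=(p_1,p_2,0)\in C_1$ and $q=(q_1,0,q_3)\in C_2$ we have $|pq|^2=(p_1-q_1)^2+p_2^2+q_3^2=\alpha(p)+\beta(q)-2p_1q_1$, where $\alpha(p)=p_1^2+p_2^2$ and $\beta(q)=q_1^2+q_3^2$; thus $|pq|^2$ depends on $p$ only through $(p_1,\alpha(p))$ and on $q$ only through $(q_1,\beta(q))$. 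If $C_1$ or $C_2$ is a line, Lemma~\ref{le:LineAlgebraic} applied to that line and the other curve, together with the planarity of the curves, forces one of the first three listed configurations; so assume neither is a line. Then neither curve lies in a plane $\{x=\mathrm{const}\}$ (such a curve, lying also in $H_1$ or $H_2$, would be a line), so on each branch we may parametrize $C_1$ by $y=f(u)$ and $C_2$ by $z=g(v)$ over the $x$-coordinate, giving $|pq|^2=F(u,v):=(u-v)^2+f(u)^2+g(v)^2$.

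\textbf{Part (b), rigidity and cases.} Since $(C_1,C_2)$ does not span many distances, $F$ takes $O(m+n)$ values on arbitrarily large grids $\pts_1\times\pts_2$. By an Elekes--R\'onyai/Elekes--Szab\'o-type dichotomy for such functions --- extracted from Theorem~\ref{th:PachDeZeeuw} applied to the planar level curves of $F$, or from its real-analytic analogue when $C_1,C_2$ are not algebraic --- the function $F$ must be \emph{degenerate}, i.e.\ locally $F(u,v)=H(A(u)+B(v))$ for univariate $H,A,B$. The identity $F_{uv}\equiv-2$ forces $H'''/H''$ to be a constant $\mu$, so $H$ is either quadratic (with $A,B$ affine) or $\tfrac{C}{\mu^2}e^{\mu t}+at+b$ (with $A,B$ logarithmic). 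In the quadratic case $F$ is a function of a single fixed affine combination of $u$ and $v$; matching coefficients --- allowing $f^2,g^2$ to have negative leading coefficients, which happens precisely on bounded intervals, i.e.\ on ellipses --- gives $f,g$ constant $\Rightarrow$ parallel lines; $f^2,g^2$ affine $\Rightarrow$ CPO parabolas; $f^2,g^2$ genuine quadratics (with the product of the relevant leading coefficients forced to $1$) $\Rightarrow$ matching curves. In the exponential case $F=(\text{affine in }u)\cdot(\text{affine in }v)+\tfrac a\mu\log(\cdot)+\tfrac a\mu\log(\cdot)+\mathrm{const}$; the logarithmic terms cancel exactly when $C_1,C_2$ are circles, giving perpendicular circles, and otherwise one lands on the non-algebraic logarithmic curves referred to in the statement. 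In each case $(C_1,C_2)$ is one of the listed configurations, which by part (a) indeed span few distances; this closes the dichotomy, and the contrapositive is exactly part (b).

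\textbf{Main obstacle.} The heart of the matter is the rigidity input: Elekes--R\'onyai and its incidence-geometric relatives, including Theorem~\ref{th:PachDeZeeuw}, are stated for constant-complexity algebraic curves, while here $f,g$ are merely smooth. One must either invoke a real-analytic ``few values implies degeneracy'' theorem (localizing so that the transcendental level curves of $F$ still intersect in a controlled way) or run a self-contained double-counting argument exploiting the very rigid shape $F=(u-v)^2+f(u)^2+g(v)^2$. A secondary difficulty is the bookkeeping in the degenerate cases: determining which sign patterns of the leading coefficients and which constants inside the logarithms produce genuine nonempty smooth real curves, and checking that in each surviving case one can, for every $m$ and $n$, fit arithmetic progressions of a common difference into $A(\mathrm{dom}\,C_1)$ and $B(\mathrm{dom}\,C_2)$, so that the degenerate form really certifies $O(m+n)$ distances.
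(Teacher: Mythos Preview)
Your approach is essentially the paper's: reduce via an Elekes--Szab\'o dichotomy to the statement that the distance function $F(u,v)=(u-v)^2+f(u)^2+g(v)^2$ has a special form $H(A(u)+B(v))$, then classify. Your analysis of the special form via $F_{uv}\equiv-2\Rightarrow H'''/H''=\mu$ is a clean variant of the paper's route (the paper instead passes through the derivative test $\partial_s\partial_t\log|\rho_s/\rho_t|\equiv0$ and a separate lemma classifying the resulting ODE for $ff'$ and $gg'$), and it recovers the same dichotomy: $\mu=0$ gives the quadratic family (lines/parabolas/matching conics), $\mu\neq0$ gives the logarithmic family.

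The gap you flag as the ``main obstacle'' is real and is precisely what the paper supplies. You cannot extract the dichotomy from Theorem~\ref{th:PachDeZeeuw} ``applied to the planar level curves of $F$''; that theorem is about distances, not about an arbitrary polynomial taking few values on a grid. The correct input is the paper's Lemma~\ref{le:SpecialFormOrExpands} (proved via the Raz--Sharir--de Zeeuw form of Elekes--Szab\'o, Theorem~\ref{th:CartesianProduct}), which needs the \emph{algebraicity} of $C_1,C_2$ to produce a constant-degree polynomial $F(p_x,q_x,\Delta)$ to which the structure theorem applies. Also note that ``does not span many distances'' only gives you sets with $o(\min\{m^{2/3}n^{2/3},m^2,n^2\})$ distances, not $O(m+n)$; the latter is a \emph{conclusion} of the machinery (Theorem~\ref{th:FewOrMany}), not an input.

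Two further loose ends in your case analysis. First, in the quadratic branch with genuine quadratics you must also treat the degenerate subcase where one of the conics collapses to a pair of lines (the paper's $A_5'=0$), which yields the line-and-ellipse-in-cylinder configuration rather than matching curves; this cannot be discarded up front since you only excluded lines before parametrizing. Second, in the $\mu\neq0$ branch you say ``otherwise one lands on the non-algebraic logarithmic curves referred to in the statement,'' but the statement of the theorem lists only perpendicular circles, not log-circles. You still need to argue that an \emph{algebraic} curve satisfying $(x-B)^2+y^2=D+A\ln|x|$ on an arc forces $A=0$; the paper invokes Khovanski\u{\i}'s fewnomial bound to see that a nontrivial log-circle meets any algebraic curve in finitely many points.
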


The proof of Theorem \ref{th:PerpendicularPlanesFewDistances} starts by studying sets that are not necessarily algebraic (see Theorem \ref{th:PerpendicularPlanesSpecialForm}).
This part of the proof leads to a surprising non-algebraic configuration that spans few distances.
We now describe this configuration. 

\begin{figure}[htp]     \centering
    \includegraphics[width=5cm]{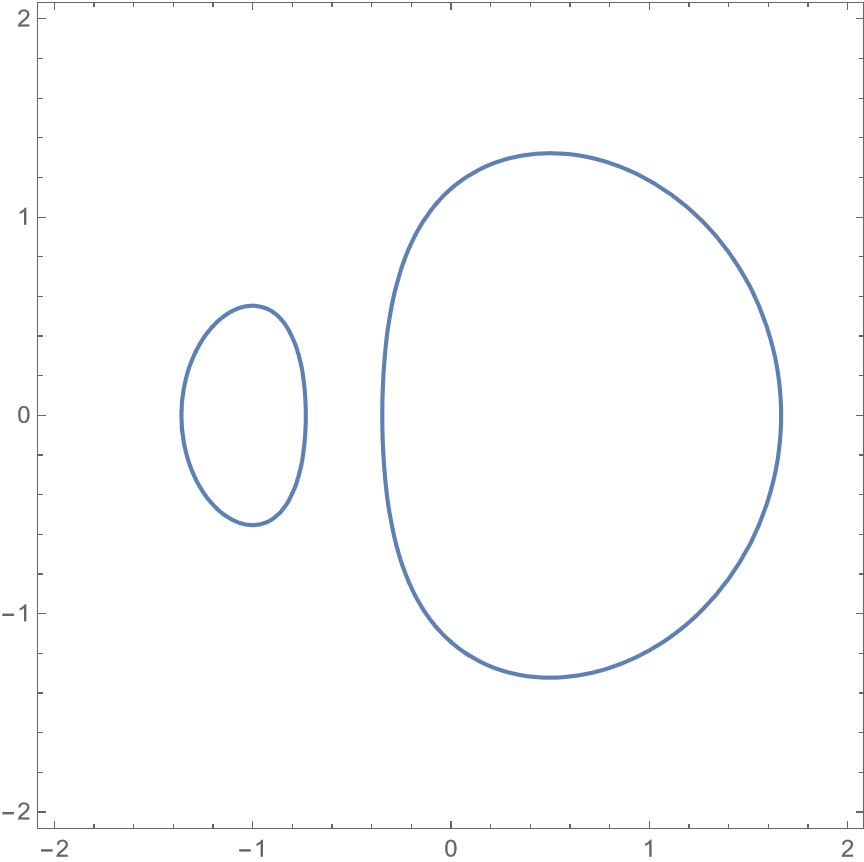}
    \caption{The log-circle $x^2+y^2=2+\ln|x+\frac{1}{2}|$.}\label{fi:LogCircle}
\end{figure}

For $A,B,D\in \R$, we consider the set in $\R^3$ that is defined by 
\begin{equation}\label{eq:LogCircle}
x^2+y^2=D+A\ln|x-B| \quad \text{ and } \quad z=0.
\end{equation}
We define \emph{log-circles} to be the sets that are obtained by translating 
and rotating the above set, with any $A,B,D\in \R$.
For example, see Figure \ref{fi:LogCircle}.
By setting $A=0$, we see that the family of log-circles includes all standard circles. 

For $A,B,D,D'\in \R$, consider the two log-circles 
\begin{align}
&\left\{(x,y,0)\in \R^3\ :\ (x-B)^2+y^2=D+A\ln|x| \right\}, \nonumber \\[2mm]
&\left\{(x,0,z)\in \R^3\ :\ x^2+z^2=D'+A\ln|x-B|\right\}. \label{eq:MatchingLogCirclesDef}
\end{align}

\begin{figure}[htp]     \centering
    \includegraphics[width=5cm]{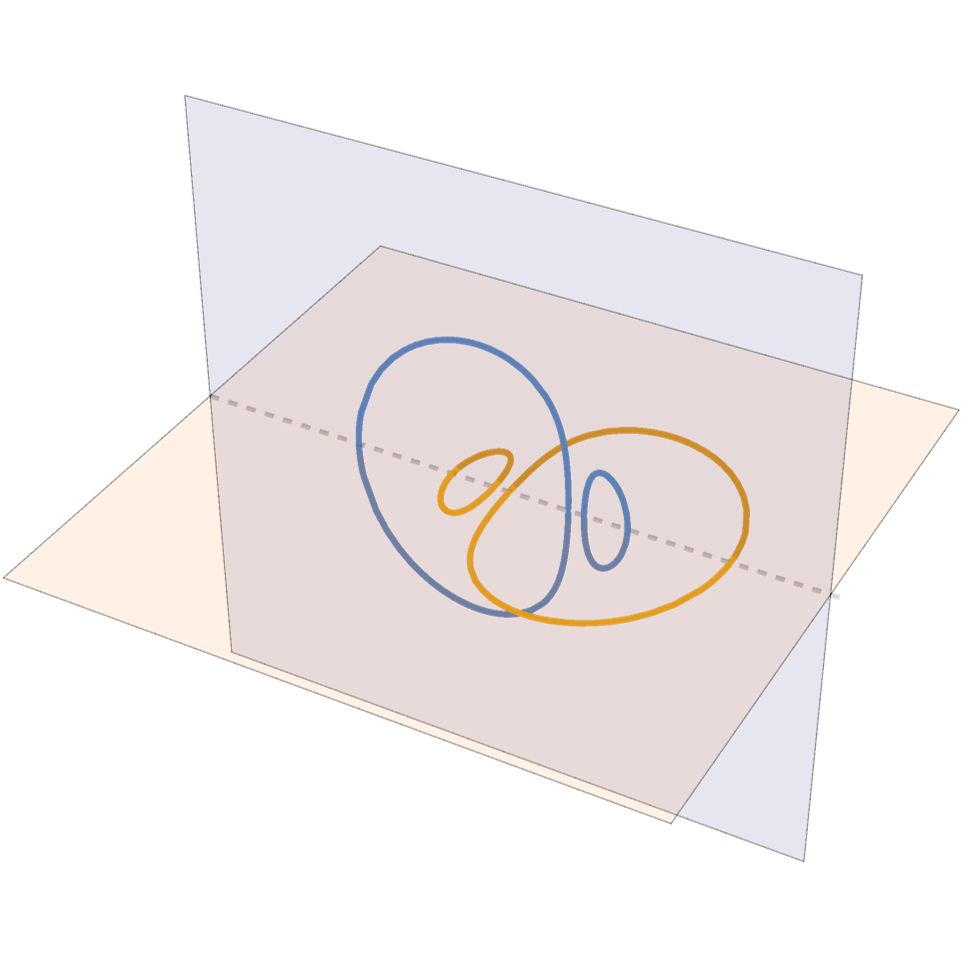}
    \caption{Matching Log-Circles.}\label{fi:MatchingLogCircles}
\end{figure}

The first log-circle is contained in the $xy$-plane and the second is contained in the $xz$-plane.
That is, these are one-dimensional sets that are contained in perpendicular planes.
We say that two log-circles \emph{match} if they can be obtained by rotating, translating, and scaling the above configuration, for some $A,B,D,D'\in \R$.
See Figure \ref{fi:MatchingLogCircles}. This is a non-algebraic generalization of aligned circles. 

\begin{theorem}\label{th:LogCirclesFewDistances}
Let $C_1$ and $C_2$ be matching log-circles. 
Then $(C_1,C_2)$ spans few distances.
\end{theorem}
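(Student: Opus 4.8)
The plan is to reduce to the normal form and then exploit a product-form identity for distances. Since any matching log-circles are an isometric, uniformly scaled copy of a pair as in \eqref{eq:MatchingLogCirclesDef}, and since isometries and uniform scalings preserve the number of distinct distances in every point configuration, it suffices to prove the statement for $C_1,C_2$ exactly as in \eqref{eq:MatchingLogCirclesDef}, with common parameters $A,B$. I would first carry out the following computation: writing a point of $C_1$ as $(x_1,y_1,0)$ with $y_1^2=D+A\ln|x_1|-(x_1-B)^2$ and a point of $C_2$ as $(x_2,0,z_2)$ with $z_2^2=D'+A\ln|x_2-B|-x_2^2$, expand $(x_1-x_2)^2+y_1^2+z_2^2$. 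The quadratic terms all cancel — this is exactly where it matters that the \emph{same} constant $A$ appears in both defining equations — leaving
\[
\bigl|(x_1,y_1,0)-(x_2,0,z_2)\bigr|^2 \;=\; -2\,x_1(x_2-B) + A\ln\bigl|x_1(x_2-B)\bigr| + \bigl(D+D'-B^2\bigr).
\]
So, setting $u=x_1$ for points of $C_1$ and $v=x_2-B$ for points of $C_2$, the squared distance equals $g(uv)$, where $g(w)=-2w+A\ln|w|+(D+D'-B^2)$ is a fixed one-variable function. I regard this identity as the crux of the argument: once it is in hand the problem is purely multiplicative, and it remains only to exhibit point sets whose $u$- and $v$-values have few pairwise products.

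For the second step I would argue as follows. Since $C_1$ is a genuine (infinite) curve, the continuous function $h_1(x)=D+A\ln|x|-(x-B)^2$ cannot be nonpositive throughout its domain $\R\setminus\{0\}$: it is real-analytic and non-constant on each of $(-\infty,0)$ and $(0,\infty)$, so its zero set is discrete, and were $h_1$ never positive, $C_1$ would be finite. Hence $\{h_1>0\}$ is a nonempty open set, so it contains an open interval $I_1$; likewise $h_2(v)=D'+A\ln|v|-(v+B)^2$ is positive on some open interval $I_2$. Given $m$ and $n$, fix compact subintervals of $I_1,I_2$ avoiding $0$, say of the form $\sigma_1[a_1,b_1]$ and $\sigma_2[a_2,b_2]$ with $0<a_i<b_i$ and $\sigma_i\in\{\pm1\}$. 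Put $\delta=\tfrac12\min\{m^{-1}\ln(b_1/a_1),\ n^{-1}\ln(b_2/a_2)\}$ and take $u_i=\sigma_1 a_1e^{i\delta}$ for $i=1,\dots,m$ and $v_j=\sigma_2 a_2e^{j\delta}$ for $j=1,\dots,n$. These lie in the chosen subintervals, so $u_i$ is the first coordinate of a point $(u_i,\sqrt{h_1(u_i)},0)\in C_1$ and $v_j$ gives a point $(v_j+B,0,\sqrt{h_2(v_j)})\in C_2$; this produces $m$ distinct points $\P_1\subset C_1$ and $n$ distinct points $\P_2\subset C_2$. Finally $u_iv_j=\sigma_1\sigma_2 a_1a_2\,e^{(i+j)\delta}$, and since $i+j$ runs over $\{2,\dots,m+n\}$ this takes at most $m+n-1$ distinct values.

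Putting the pieces together: by the identity, every squared distance between a point of $\P_1$ and a point of $\P_2$ is $g(u_iv_j)$, a function of the product $u_iv_j$, hence takes at most $m+n-1$ values, so $D(\P_1,\P_2)\le m+n-1=O(m+n)$; the reduction in the first paragraph then extends the bound to all matching log-circles. I expect the only point requiring real (if modest) care to be the existence of the intervals $I_1,I_2$ — equivalently, that a bounded geometric progression of admissible first coordinates can be placed on each curve — which is where one uses that $C_1,C_2$ are honest curves rather than degenerate finite sets. Beyond spotting the displayed identity, I anticipate no serious obstacle.
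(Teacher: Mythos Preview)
Your proof is correct and follows essentially the same route as the paper: both reduce to the normal form, derive the same identity $\rho=-2uv+A\ln|uv|+(D+D'-B^2)$ with $u=x_1$ and $v=x_2-B$, and then exploit that the squared distance depends only on the product $uv$. The only cosmetic difference is that the paper recognizes this as the distance function being ``of a special form'' and invokes Lemma~\ref{le:SpecialFormFewDistances}, whereas you unroll that lemma by directly placing geometric progressions of $u$- and $v$-values; your added care in verifying that the relevant intervals $I_1,I_2$ exist is a nice touch the paper leaves implicit.
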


\parag{Our technique.}
Our analysis begins with the proof of Mathialagan and Sheffer \cite{MS20}, but significantly generalizes this proof. We now briefly describe one of the main tools of our analysis.

We say that a set $S\subset \R^3$ is an \emph{arc} if there exists a smooth bijective map $\gamma:(0,1)\to S$. 
We also say that $\gamma$ is a \emph{parameterization} of $S$.
For more information, such as the definition of smoothness, see Section \ref{sec:prelim}.
We extend the properties of spanning many distances and spanning few distances to pairs of arcs in the straightforward way. 
That is, instead of considering point sets of two curves, we consider point sets on two arcs.

Consider arcs $A_1,A_2\subset \R^3$ with respetive parameterizations $\gamma_1,\gamma_2$.
For $s\in (0,1)$, we denote the three coordinates of $\gamma_i(s)$ as $\gamma_{i,x}(s)$, $\gamma_{i,y}(s)$, and $\gamma_{i,z}(s)$. 
The \emph{distance function} of $\gamma_1$ and $\gamma_2$ is 
\begin{equation} \label{eq:DistanceFunc}
\rho(s,t)=(\gamma_{1,x}(s)-\gamma_{2,x}(t))^2
+(\gamma_{1,y}(s)-\gamma_{2,y}(t))^2
+(\gamma_{1,z}(s)-\gamma_{2,z}(t))^2.
\end{equation}
The distance function  receives a point from each arc, using the parameterizations $\gamma_1$ and $\gamma_2$. 
The function returns the square of the distance between those two points.

A function $f:\R^2\to \R$ \emph{is of a special form} if there exist open intervals $I_1,I_2,I_3\subset \R$ and smooth functions $\phi_1:I_1\to \R, \phi_2:I_2\to \R, \phi_3:I_3\to \R$ that satisfy the following: For every $s\in I_1$ and $t\in I_2$, we have that $\phi_1(s)+\phi_2(t)\in I_3$ and that
\begin{equation} \label{eq:SpecialForm}
f(s,t)=\phi_3(\phi_1(s)+\phi_2(t)).
\end{equation}

The following lemma is a generalization of a lemma of Mathialagan and Sheffer \cite{MS20}, which in turn adapts a result of Raz \cite{Raz20}. 

\begin{lemma}\label{le:SpecialFormOrExpands}
Let $A_1,A_2\subset \R^3$ be arcs with respective parametrizations $\gamma_1,\gamma_2$. 
Let $\rho(s,t)$ be the distance function of $\gamma_1$ and $\gamma_2$. 
Then at least one of the following holds.\\[2mm]
(a) The configuration $(A_1,A_2)$ spans many distances. \\[2mm]
(b) The function $\rho(s,t)$ is of a special form. 
\end{lemma}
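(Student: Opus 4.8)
## Proof Proposal

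The plan is to follow the strategy behind the Pach--de~Zeeuw argument (Theorem~\ref{th:PachDeZeeuw}) combined with the Elekes--Rónyai / Raz--Sharir--Solymosi machinery, as adapted by Mathialagan and Sheffer~\cite{MS20}. Given arcs $A_1,A_2$ with parameterizations $\gamma_1,\gamma_2$ and distance function $\rho(s,t)$, we want to show that either $\rho$ is of a special form, or $(A_1,A_2)$ spans many distances. The key observation is that $\rho$ is a real-analytic (indeed algebraic, in the conic case; smooth in general) function of two variables, and that the number of distinct distances between $\pts_1\subset A_1$ and $\pts_2\subset A_2$ is exactly the size of the image $\rho(\pts_1'\times\pts_2')$, where $\pts_i'$ is the preimage of $\pts_i$ under $\gamma_i$. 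So this reduces to a bipartite Elekes--Rónyai-type problem: for a smooth function $\rho(s,t)$, either $|\rho(X\times Y)|=\Omega(\min\{|X|^{2/3}|Y|^{2/3},|X|^2,|Y|^2\})$ for all finite $X,Y$, or $\rho$ has a decomposable structure.

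First I would invoke the relevant dichotomy for bivariate functions. The cleanest route is to appeal to the Raz--Sharir--de~Zeeuw type theorem (or the version in Raz~\cite{Raz20}, which Mathialagan--Sheffer already adapt): if $F(s,t)$ is a sufficiently smooth function on a neighborhood such that $F$ is \emph{not} locally of the form $h(g_1(s)+g_2(t))$ or $h(g_1(s)\cdot g_2(t))$, then $|F(X\times Y)|$ is superlinear with the stated exponent $2/3$. In our situation $\rho$ is a sum of squares of coordinate differences, so it is automatically symmetric in a mild sense and — crucially — one can rule out the multiplicative form $h(g_1(s)g_2(t))$: a squared-distance function $\rho(s,t)$ always satisfies $\rho(s,t)\ge 0$ and vanishes exactly when $\gamma_1(s)=\gamma_2(t)$, and it is not hard to argue that the multiplicative form is incompatible with the analytic shape of $\rho$ (or, alternatively, any multiplicative decomposition can be converted to an additive one by taking logarithms on the relevant subdomain, which is precisely why the ``special form'' in \eqref{eq:SpecialForm} is stated only additively, with $\phi_1,\phi_2,\phi_3$ allowed to be arbitrary smooth functions). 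Thus the trichotomy collapses to the desired dichotomy: either $\rho$ is of a special form, or $(A_1,A_2)$ spans many distances.

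The main technical obstacle is the passage from a \emph{global} statement about curves/arcs to the \emph{local} hypotheses required by the Elekes--Rónyai machinery, and conversely lifting a local special-form conclusion back to a genuine special form on open intervals. Concretely: the Raz-type theorems are stated for functions smooth on a neighborhood of a point, and give either a local expansion bound or a local functional decomposition on some small box. One must (i) restrict to a subarc where $\rho$, $\gamma_1$, $\gamma_2$ are all smooth and where the relevant partial derivatives $\partial_s\partial_t \log|\partial_s\partial_t \rho|$-type quantities are well-defined and nonvanishing or identically zero — this is where case analysis on the zero set of certain derivatives of $\rho$ enters; (ii) observe that a superlinear distinct-distance bound on any subarc immediately yields one on the whole arc (passing to a positive fraction of the points), so case (a) is inherited; and (iii) in the remaining case, where $\rho$ has the special form \eqref{eq:SpecialForm} on some subinterval, note that $\rho$ being real-analytic (or the curves being algebraic) lets us propagate the identity $f(s,t)=\phi_3(\phi_1(s)+\phi_2(t))$ to the full open intervals of definition via analytic continuation / the identity theorem. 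The bookkeeping that $\phi_1(s)+\phi_2(t)$ stays inside the domain $I_3$ of $\phi_3$ is then a matter of shrinking $I_1,I_2$ slightly. I expect step (i), the derivative case analysis needed to feed the Raz criterion cleanly, to be the part requiring the most care, since one must verify that the degenerate sub-cases (where some Wronskian-like determinant vanishes identically) are exactly the ones producing the special form.

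Finally, I would assemble the pieces: restrict to smooth subarcs; if on \emph{every} such subarc the function $\rho$ fails the Elekes--Rónyai non-degeneracy test it must be of special form there, and by analyticity globally, giving (b); otherwise on some subarc the expansion bound holds, which propagates to $(A_1,A_2)$, giving (a). This matches the structure of the Mathialagan--Sheffer proof for circles, with the only new input being that we allow the outer function $\phi_3$ and the inner functions $\phi_1,\phi_2$ to be arbitrary smooth functions rather than the specific algebraic ones arising for circles — which is exactly what later allows the log-circle construction of Theorem~\ref{th:LogCirclesFewDistances} to be captured by Lemma~\ref{le:SpecialFormOrExpands}.
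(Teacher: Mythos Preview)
Your proposal is in the right spirit—reduce to an Elekes--R\'onyai/Raz--Sharir--de~Zeeuw dichotomy—and this is indeed the paper's route. But the execution differs from what you sketch in two substantive ways.

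First, the paper does not apply a smooth-function dichotomy directly to $\rho(s,t)$. Instead it assumes the arcs lie on algebraic curves (this extra hypothesis is made explicit in the paper's Lemma~\ref{le:SpecialFormOrManyDist}, which is the actual statement proved), rotates so that neither curve sits in a plane parallel to the $yz$-plane, and then encodes the distance relation as the zero set of a polynomial $F(p_x,q_x,\Delta)$, where $p_x,q_x$ are the $x$-coordinates of points on $C_1,C_2$ and $\Delta$ is the squared distance. Concretely, $F$ defines the Zariski closure of the projection of a two-dimensional variety $U\subset\R^7$ to the three coordinates $(p_x,q_x,\Delta)$. The polynomial Raz--Sharir--de~Zeeuw theorem (Theorem~\ref{th:CartesianProduct}) is then applied to $F$, not to $\rho$. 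Case~(i) gives a bound on quadruples, converted to the distinct-distances bound via Cauchy--Schwarz; case~(ii) gives an additive decomposition of $F$, which pulls back to a special form for $\rho$ after composing with $s\mapsto\gamma_{1,x}(s)$ and $t\mapsto\gamma_{2,x}(t)$. The bulk of the work is verifying that $F$ genuinely depends on all three variables, which amounts to ruling out the degenerate configuration of a circle and its axis.

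Second, two of the obstacles you flag are not present. The multiplicative form $h(g_1(s)g_2(t))$ never arises: case~(ii) of Theorem~\ref{th:CartesianProduct} already yields an additive decomposition, and in any event (as you note yourself) a multiplicative form is absorbed into the additive one by the freedom in $\phi_1,\phi_2,\phi_3$. More importantly, there is no local-to-global step: the definition of ``special form'' in~\eqref{eq:SpecialForm} only requires the identity on \emph{some} open box $I_1\times I_2$, so the analytic-continuation argument in your item~(iii) is unnecessary. Likewise your item~(i)—the derivative case analysis—does not enter here; that is the content of the separate Lemma~\ref{le:DerivativeTest}, used later to \emph{rule out} a special form, not to establish the dichotomy.
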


After proving Lemma \ref{le:SpecialFormOrExpands}, it remains to study when the distance function $\rho(s,t)$ is of a special form. 
To do that, we rely on the following \emph{derivative test}.
Here $\rho_s$ and $\rho_t$ are the first partial derivative of $\rho$ according to $s$ and $t$, respectively.

\begin{lemma}[Derivative test]\label{le:DerivativeTest}
Let $\rho:(0,1)^2\to \R$ be a smooth function of a special form, such that $\rho_s\not\equiv 0$ and $\rho_t\not\equiv 0$ in every open neighborhood in $\R^2$. Then there exists an open neighborhood in $\R^2$ in which 
\[
\frac{\partial^2 (\ln |\rho_s/\rho_t|)}{\partial s \partial t}\equiv 0.
\]
\end{lemma}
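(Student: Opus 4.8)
The plan is to exploit the special form of $\rho$ directly, reducing the statement to a one-line chain-rule computation plus some bookkeeping about nonvanishing. By hypothesis there are open intervals $I_1,I_2,I_3$ and smooth functions $\phi_1,\phi_2,\phi_3$ with $\rho(s,t)=\phi_3(\phi_1(s)+\phi_2(t))$ and $\phi_1(s)+\phi_2(t)\in I_3$ for all $(s,t)\in I_1\times I_2$; we may assume $I_1\times I_2\subseteq(0,1)^2$. Writing $u(s,t)=\phi_1(s)+\phi_2(t)$, the chain rule gives $\rho_s(s,t)=\phi_3'(u(s,t))\,\phi_1'(s)$ and $\rho_t(s,t)=\phi_3'(u(s,t))\,\phi_2'(t)$ on $I_1\times I_2$. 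The key point is that on any open set where the three factors $\phi_3'(u(s,t))$, $\phi_1'(s)$, and $\phi_2'(t)$ are all nonzero, the common factor $\phi_3'(u)$ cancels in the ratio, so $\rho_s/\rho_t=\phi_1'(s)/\phi_2'(t)$ and hence $\ln|\rho_s/\rho_t|=\ln|\phi_1'(s)|-\ln|\phi_2'(t)|$. This is a sum of a smooth function of $s$ alone and a smooth function of $t$ alone (indeed $\partial_s\ln|\rho_s/\rho_t|=\phi_1''(s)/\phi_1'(s)$, which does not depend on $t$), so $\partial^2(\ln|\rho_s/\rho_t|)/\partial s\,\partial t\equiv 0$ on that open set.

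It remains to produce such an open set, and this is exactly where the hypotheses $\rho_s\not\equiv 0$ and $\rho_t\not\equiv 0$ on every open neighborhood are used. Applying the first hypothesis to the open set $I_1\times I_2$, pick $(s_0,t_0)$ with $\rho_s(s_0,t_0)\ne 0$; then $\phi_3'(u(s_0,t_0))\ne 0$ and $\phi_1'(s_0)\ne 0$, so by continuity of $\phi_3'\circ u$ and of $\phi_1'$ there is a neighborhood $U\subseteq I_1\times I_2$ of $(s_0,t_0)$ on which both $\phi_3'(u(s,t))\ne 0$ and $\phi_1'(s)\ne 0$. Now apply the second hypothesis to the open set $U$: there is $(s_1,t_1)\in U$ with $\rho_t(s_1,t_1)\ne 0$, which, since $\phi_3'(u(s_1,t_1))\ne 0$ already, forces $\phi_2'(t_1)\ne 0$; by continuity there is a neighborhood $U'\subseteq U$ of $(s_1,t_1)$ on which also $\phi_2'(t)\ne 0$. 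On $U'$ all three factors are nonzero, so $\rho_s$ and $\rho_t$ are smooth and nowhere zero there, $\ln|\rho_s/\rho_t|$ is smooth, and the computation of the previous paragraph yields the claimed identity on $U'$.

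I do not expect a serious obstacle: the substance is the cancellation of $\phi_3'(u)$ in $\rho_s/\rho_t$, and the rest is routine localization to a neighborhood where nothing vanishes. The only mild point to watch is that the special form controls $\rho$ only on $I_1\times I_2$, not on all of $(0,1)^2$, so the final neighborhood $U'$ must be taken inside $I_1\times I_2$ — harmless, since the lemma only asserts the existence of \emph{some} open neighborhood with the stated property. (One may also note that none of $\phi_1',\phi_2',\phi_3'$ can vanish identically on its interval, else one of $\rho_s,\rho_t$ would vanish on the open set $I_1\times I_2$, contradicting the hypotheses; but this is already subsumed by the neighborhood argument.)
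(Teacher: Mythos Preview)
Your proof is correct and follows essentially the same approach as the paper: compute $\rho_s$ and $\rho_t$ via the chain rule, cancel the common factor $\phi_3'(u)$ in the ratio, and observe that $\ln|\phi_1'(s)|-\ln|\phi_2'(t)|$ has vanishing mixed partial. Your localization argument to produce a neighborhood where all three factors are nonzero is more carefully spelled out than in the paper (which simply asserts the existence of such a $U$ from the hypotheses), but the substance is identical.
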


In our proofs, we study the expression from Lemma \ref{le:DerivativeTest}, when $\rho(s,t)$ is the distance fuction from \eqref{eq:DistanceFunc}. By using Wolfram Mathematica \cite{Wolfram}, we show that this expression is not identically zero in any open neighborhood.
Then the contrapositive of Lemma \ref{le:DerivativeTest} implies that $\rho(s,t)$ is not of a special form. We must thus be in case (a) of Lemma \ref{le:SpecialFormOrExpands}.

We also derive the following useful result. 

\begin{theorem}\label{th:FewOrMany}
Every pair of curves in $\R^3$ spans either few or many distances.
\end{theorem}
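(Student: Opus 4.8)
The plan is to deduce Theorem~\ref{th:FewOrMany} as a \emph{dichotomy} consequence of Lemma~\ref{le:SpecialFormOrExpands} together with a case analysis of what ``special form'' means for a distance function. The key observation is that Lemma~\ref{le:SpecialFormOrExpands} already gives a clean fork for arcs: either $(A_1,A_2)$ spans many distances, or the distance function $\rho(s,t)$ is of a special form. So the entire content of the theorem is to show that whenever $\rho(s,t)$ is of a special form, the configuration $(C_1,C_2)$ actually spans \emph{few} distances --- i.e., that the ``special form'' condition is not merely an obstruction to the lower bound but is genuinely enough to build linear-size point sets with $O(m+n)$ distances.

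First I would reduce from curves to arcs. A constant-complexity irreducible one-dimensional variety $C_i$ contains a relatively open subset that is a smooth arc (after removing the finitely many singular points and finitely many points where the Zariski-tangent structure degenerates); and if some sub-arc $A_i\subset C_i$ spans few distances with a sub-arc of the other curve, the point sets witnessing this lie on $C_1,C_2$ themselves, so $(C_1,C_2)$ spans few distances. Conversely, spanning many distances passes from the whole curve to the fact that \emph{every} pair of sub-arcs spans many distances, since the $\Omega(\min\{m^{2/3}n^{2/3},m^2,n^2\})$ bound is required for \emph{all} finite point sets on the curves. Thus it suffices to prove: for arcs $A_1,A_2$ with parameterizations $\gamma_1,\gamma_2$, if $\rho$ is of a special form then $(A_1,A_2)$ spans few distances.

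Next I would extract the construction from the special form $\rho(s,t)=\phi_3(\phi_1(s)+\phi_2(t))$. Choose a value $r_0$ in the range of $\phi_1+\phi_2$ where $\phi_3'\neq 0$ (this exists unless $\phi_3$ is locally constant, which would force $\rho$ locally constant and the conclusion is trivial there). Pick a geometric-progression-like set $U=\{u_1,\dots,u_m\}\subset I_1$ and $V=\{v_1,\dots,v_n\}\subset I_2$ such that $\{\phi_1(u_i)\}$ and $\{\phi_2(v_j)\}$ each form arithmetic progressions with a common difference $\delta$ (possible because $\phi_1,\phi_2$ are smooth and, on the relevant sub-interval, can be taken monotone with a well-defined inverse, so $u_i:=\phi_1^{-1}(c+i\delta)$ and $v_j:=\phi_2^{-1}(c'+j\delta)$ work for small $\delta$). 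Then $\phi_1(u_i)+\phi_2(v_j)$ depends only on $i+j$, taking at most $m+n-1$ values; applying $\phi_3$, the squared distances $\rho(u_i,v_j)$ take at most $m+n-1$ values, hence so do the distances. Setting $\pts_1=\gamma_1(U)$ and $\pts_2=\gamma_2(V)$ gives $|\pts_1|=m$, $|\pts_2|=n$, and $D(\pts_1,\pts_2)=O(m+n)$; combined with the trivial lower bound $D(\pts_1,\pts_2)=\Omega(\max\{m,n\})$ (for non-degenerate arcs, distances can't all be equal past a bounded number of points, or even if they could the $\Omega(m+n)$ lower bound is immediate since the two point sets are on genuine curves) this is $\Theta(m+n)$, so $(A_1,A_2)$ spans few distances.

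The main obstacle I anticipate is the technical care in the reduction between curves and arcs, and in particular handling the degenerate possibilities along the way: $\phi_1,\phi_2,\phi_3$ need to be locally invertible on the sub-intervals we use, which requires choosing sub-arcs avoiding critical points; and one must make sure that after choosing $m$ and $n$ arbitrarily large, the points $u_i,v_j$ still fit inside the (fixed) open intervals $I_1,I_2$ --- this is why the arithmetic progressions must be taken with a small common difference $\delta=\delta(m,n)$ shrinking as $m,n$ grow, which is harmless since the bound $O(m+n)$ does not depend on $\delta$. A second, more cosmetic point is ruling out the trivially degenerate case where one arc is a single point or the distance function is globally constant; but a one-dimensional variety is infinite, and two distinct points already span a positive distance, so a short argument dispatches this. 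Once these reductions are in place, the theorem follows immediately: every configuration is in case (a) or case (b) of Lemma~\ref{le:SpecialFormOrExpands}, case (a) is ``many'' by definition, and case (b) is ``few'' by the construction above.
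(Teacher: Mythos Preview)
Your proposal is correct and follows essentially the same approach as the paper: the paper packages the ``special form implies few distances'' step as a separate lemma (Lemma~\ref{le:SpecialFormFewDistances}), whose proof is exactly the arithmetic-progression construction you describe, and then combines it with Lemma~\ref{le:SpecialFormOrExpands} and the curve-to-arc decomposition (Lemma~\ref{le:SmoothParametrization}) via pigeonhole, just as you outline. One cosmetic remark: your worry about needing $\phi_3'\neq 0$ is unnecessary, since once $\phi_1(u_i)+\phi_2(v_j)$ depends only on $i+j$, applying \emph{any} $\phi_3$ still yields at most $m+n-1$ values.
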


\parag{Open problems.} 
One main remaining open problem is to characterize all pairs of curves in $\R^3$ that span few distances.
With the above in mind, we propose the following question:
\begin{quote}
\emph{True or false? If two planar curves in $\R^3$ span few distances, then the planes that contain the curves are either parallel or perpendicular.}
\end{quote}
If the above statement is true, then Theorem \ref{th:PerpendicularPlanesFewDistances} and Lemma \ref{le:ParallelPlanes} characterize all pairs of \emph{planar} curves in $\R^3$ that span few distances.

Recently, Solymosi and Zahl \cite{SolyZhal22} improved the bound for the number of distinct distances between two lines from $\Omega(\min\{m^{2/3}n^{2/3},m^2,n^2\})$ to $\Omega(\min\{m^{3/4}n^{3/4},m^2,n^2\})$.
It seems plausible that the bounds of Theorem \ref{th:PachDeZeeuw}, Theorem \ref{th:ShefferMathialagan}, and the current work could also be improved in a similar way. 
Such a potential improvement would not replace the proofs of the current work. 
Instead, it would be added on top of our proofs, to amplify the bounds that are obtained. 

We also suggest studying  distinct distances in $\R^3$ between points on two curves that are not necessarily algebraic. 
Theorem \ref{th:PerpendicularPlanesSpecialForm} states that, when the two curves are contained in perpendicular planes, the only cases with few distances and a curve that is not algebraic are \begin{itemize}[noitemsep,topsep=0pt]
    \item one of the curves is a line and the other is contained in a parallel or perpendicular plane. 
    \item matching log-circles. 
    \item periodic functions that involve a sine or a cosine.
\end{itemize}
The third bullet point is not covered by Theorem \ref{th:PerpendicularPlanesSpecialForm}, since a periodic curve may not be the union of finitely many arcs and points.
In other words, problem is
\begin{quote}
\emph{Are there pairs of curves in $\R^3$ that span few distances, are not contained in perpendicular plane, not periodic, not lines, and at least one is not algebraic?.}
\end{quote}

\parag{Paper structure.}
Section \ref{sec:prelim} is a brief survey of basic real algebraic geometry that is used throughout the paper. 
Section \ref{sec:FirstBounds} contains two first distinct distances results which are easy to prove. 
We rely on these results in the following sections.
In Section \ref{sec:SpecialForm}, we study functions of special form, which are also used in the following sections.
In Section \ref{sec:Configurations}, we study configurations with few distinct distances. 
In Section \ref{sec:PerpendicularPlanes}, we prove Theorem \ref{th:PerpendicularPlanesFewDistances}.
In Section \ref{sec:FewDistancesBetweenConics}, we prove Theorem \ref{th:FewDistancesBetweenConics}.
Finally, in appendices \ref{app:Wolfram} and \ref{app:Wolfram2}, we go over Mathematica computations that are required for the proof of Theorem \ref{th:FewDistancesBetweenConics}. 

\parag{Acknowledgements.} We are grateful to Surya Mathialagan for the helpful conversations and support. 
We also wish to thank the mentors and staff of the Polymath Jr program, for making this research project possible. 

\section{Real algebraic geometry preliminaries} \label{sec:prelim}

In this section, we briefly survey notation and results from real algebraic geometry. 
The reader might wish to skim this section and return to it as necessary. 

For references and more information, see for example \cite{BCR98,Harris92}.
For polynomials $f_1,\ldots,f_k\in \R[x_1,\ldots,x_d]$, the \emph{variety} defined by $f_1,\ldots,f_k$ is
\[ \mathbf{V}(f_1,\ldots,f_k) = \left\{p\in \R^d\ :\ f_1(p)=f_2(p) = \cdots = f_k(p)=0 \right\}. \]
We say that a set $U \subset \R^d$ is a variety if there exist $f_1,\ldots,f_k\in \R[x_1,\ldots,x_d]$ such that $U = \vb(f_1,\ldots,f_k)$.
While not true over some other fields, in $\R^d$ every variety can be defined using a single polynomial. 
A variety $U$ is \emph{irreducible} if there do not exist two varieties $U_1,U_2\subsetneq U$ such that $U=U_1 \cup U_2$.
The \emph{dimension} of an irreducible variety $U$, denoted $\dim U$, is the largest integer $d_U$ for which there exist non-empty irreducible varieties $U_0,U_1,\ldots,U_{d_U}$ such that
\[
U_0 \subsetneq U_1 \subsetneq \ldots \subsetneq U_{d_U}=U. 
\]

We now mention two classic results about intersections of varieties in $\R^d$.

\begin{theorem}[Bezout's theorem]\label{th:BezoutTheorem}
Let $f$ and $g$ be polynomials in $\R[x,y]$ of degrees $k_f$ and $k_g$, respectively. If $f$ and $g$ do not have common factors then $\mathbf{V}(f)\cap \mathbf{V}(g)$ consists of at most $k_f \cdot k_g$ points.
\end{theorem}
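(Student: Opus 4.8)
The plan is to prove this affine real form of Bézout's theorem through the theory of resultants, treating $f$ and $g$ as polynomials in the single variable $y$ with coefficients in $\R[x]$. The resultant $R(x)=\mathrm{Res}_y(f,g)\in\R[x]$ encodes exactly the condition that the two univariate polynomials $f(a,\cdot)$ and $g(a,\cdot)$ have a common root. My aim is to establish three facts: (i) $R$ is not the zero polynomial; (ii) the $x$-coordinate of every point of $\mathbf{V}(f)\cap\mathbf{V}(g)$ is a root of $R$; and (iii) $\deg R\le k_f k_g$. Once these are in place, after arranging that distinct intersection points have distinct $x$-coordinates, the number of intersection points is at most the number of roots of $R$, hence at most $\deg R\le k_f k_g$. (One could alternatively pass to $\C$ and the projective closure and invoke the classical projective Bézout count, but the resultant route is self-contained and yields the affine real bound directly.)

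Before the counting step is legitimate I first need $\mathbf{V}(f)\cap\mathbf{V}(g)$ to be finite. I would obtain this cheaply from the two resultants $\mathrm{Res}_y(f,g)\in\R[x]$ and $\mathrm{Res}_x(f,g)\in\R[y]$: every intersection point $(a,b)$ makes $a$ a root of the first and $b$ a root of the second, so the intersection lies in the finite grid (roots in $x$)$\times$(roots in $y$). With finiteness in hand, I would apply a single generic linear change of coordinates accomplishing two things at once: distinct intersection points acquire distinct $x$-coordinates (this only rules out the finitely many directions joining pairs of intersection points), and the coefficient of $y^{k_f}$ in $f$ and of $y^{k_g}$ in $g$ each becomes a nonzero constant (a generic rotation sends the top-total-degree form to an expression with a nonzero pure-$y^{k}$ term). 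This normalization is precisely what makes the resultant behave cleanly, so I would carry it out up front.

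With the leading $y$-coefficients now nonzero constants, the three facts follow in order. For (i): any factor of $f$ depending on $x$ alone would divide the constant leading coefficient, so $f$ and $g$ are primitive as polynomials in $y$; by Gauss's lemma, having no common factor in $\R[x,y]$ forces coprimality in $\R(x)[y]$, and the resultant of coprime polynomials over the field $\R(x)$ is nonzero, so $R\not\equiv 0$. For (ii): because the leading coefficients never vanish, the resultant specializes cleanly, $R(a)=\mathrm{Res}_y\!\big(f(a,\cdot),g(a,\cdot)\big)$, and an intersection point supplies $f(a,\cdot),g(a,\cdot)$ with a common root $b$, whence $R(a)=0$. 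For (iii): writing $f=\sum_i a_i(x)\,y^{\,k_f-i}$ and $g=\sum_j b_j(x)\,y^{\,k_g-j}$, the total-degree hypothesis gives $\deg a_i\le i$ and $\deg b_j\le j$; assigning $a_i$ the weight $i$ and $b_j$ the weight $j$ makes the Sylvester determinant isobaric of weight $k_f k_g$, so every monomial of $R$ has $x$-degree at most its weight, yielding $\deg R\le k_f k_g$.

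I expect the degree bound (iii) to be the main obstacle, since it is the step responsible for the sharp constant $k_f k_g$ rather than the weaker $(k_f k_g)^2$ bound that the crude double-resultant finiteness argument would give. The weighted-homogeneity (isobaric) property of the Sylvester determinant is the crux, and it is exactly what couples with the total-degree constraints $\deg a_i\le i$ to deliver the bound. The secondary delicate point is the coordinate normalization: I must verify that one generic linear change simultaneously separates the finitely many intersection $x$-coordinates and makes both leading coefficients constant, and that these genericity conditions exclude only a finite set of directions, so that an admissible choice genuinely exists.
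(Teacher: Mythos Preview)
The paper states Bézout's theorem as a classical background result in Section~\ref{sec:prelim} and does not supply a proof; it is simply quoted alongside Milnor--Thom as a standard tool. So there is no ``paper's own proof'' to compare against.

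Your resultant-based argument is a standard and correct route to the affine real bound. The three ingredients you isolate --- nonvanishing of $R=\mathrm{Res}_y(f,g)$ via Gauss's lemma, specialization of the resultant at each intersection abscissa, and the isobaric weight computation giving $\deg R\le k_f k_g$ --- are exactly the classical steps. The preliminary generic linear change to make the leading $y$-coefficients nonzero constants and to separate the $x$-coordinates of the (finitely many) intersection points is also standard, and your justification that only finitely many directions are excluded is correct. One small remark: you should note that a linear change of coordinates preserves both the total degrees $k_f,k_g$ and the no-common-factor hypothesis, so nothing is lost in the normalization; this is implicit in your write-up but worth stating. Otherwise the proposal is sound.
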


\begin{theorem}[Milnor--Thom \cite{Milnor64,Thom65}]\label{th:MilnorThom}
Let $f_1,...,f_m\in \R[x_1,...,x_d]$ be of degree at most $k$. Then the number of connected components of $\mathbf{V}(f_1,...,f_m)$ is at most
\[
k(2k-1)^d.
\]
\end{theorem}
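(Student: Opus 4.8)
The plan is to follow the classical route of Milnor and Thom: convert the counting of connected components into a counting of critical points of a generic linear function on the variety, and then bound those critical points with a Bezout-type estimate.

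First I would reduce the intersection of several hypersurfaces to a single one. As recorded in Section~\ref{sec:prelim}, over $\R$ the set $\mathbf{V}(f_1,\ldots,f_m)$ equals the zero set $\mathbf{V}(g)$ of the single polynomial $g=f_1^2+\cdots+f_m^2$, which has degree at most $2k$. It therefore suffices to bound the number of connected components of a real hypersurface $W=\mathbf{V}(g)\subset\R^d$ in terms of $\deg g$, and then to track the degrees so as to recover the stated bound $k(2k-1)^d$.

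Next I would set up the Morse-theoretic count. After a generic rotation, take the linear height function $h(x)=x_1$. On a \emph{smooth} and \emph{compact} hypersurface, every connected component contains a point at which $h$ attains its maximum, and such a point is a critical point of $h|_W$; hence the number of components is at most the number of critical points. On the smooth locus these critical points are exactly the common solutions of
\[
g=0,\qquad \frac{\partial g}{\partial x_2}=0,\ \ldots,\ \frac{\partial g}{\partial x_d}=0,
\]
a system of one equation of degree $\deg g$ and $d-1$ equations of degree $\deg g-1$. A $d$-dimensional version of Bezout's theorem (the natural extension of Theorem~\ref{th:BezoutTheorem} to $d$ variables) bounds the number of isolated solutions by the product of the degrees, which is at most $(2k)(2k-1)^{d-1}$; routine bookkeeping of the degree doubling, together with the trivial linear case $k=1$, then yields the claimed bound $k(2k-1)^d$.

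The main obstacle is that $W$ is in general neither smooth nor compact, so Morse theory does not apply to it directly, and this is where essentially all the real work lies. To handle singularities I would replace $W$ by the smooth level set $\{g=\delta\}$ for a small regular value $\delta>0$, which exists by Sard's theorem, and argue that passing to this smooth approximant does not decrease the number of components of the genuine variety. To handle non-compactness I would either intersect with a large ball or add a confining term so that the sublevel sets of $h$ become compact; one also needs the direction defining $h$ to be generic enough that $h|_W$ is a Morse function with distinct critical values. The delicate point is to make the perturbation-and-limit argument precise, guaranteeing that the smooth, compact model one actually counts on faithfully captures every connected component of the original (possibly singular, possibly unbounded) variety; once this is in place, the Bezout estimate above finishes the proof.
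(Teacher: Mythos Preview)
The paper does not supply a proof of Theorem~\ref{th:MilnorThom}; it is stated in Section~\ref{sec:prelim} as a classical result, with citations to \cite{Milnor64,Thom65}, and is then used as a black box (for instance in Corollary~\ref{co:FiniteOrContained} and in the proof of Lemma~\ref{le:SpecialFormOrManyDist}). There is thus no argument in the paper to compare your proposal against.

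Your sketch is a faithful outline of the original Milnor argument: pass to the single polynomial $g=\sum f_i^2$, perturb to a smooth regular level set $\{g=\delta\}$, compactify, count critical points of a generic linear height by a $d$-variable Bezout bound, and handle $k=1$ separately. The places you flag as delicate are exactly where the real content lies. One refinement worth noting: the link between components of $\{g=0\}$ and critical points on $\{g=\delta\}$ goes through components of the \emph{region} $\{g\le\delta\}$, using that on a compact smooth boundary each component carries at least two critical points of $h$ (a maximum and a minimum); this is how the extra factor of two is recovered in the sharpest versions of the bound, though the cruder inequality $(2k)(2k-1)^{d-1}\le k(2k-1)^{d}$ for $k\ge 2$ that you invoke already suffices for the form stated here. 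You should also make explicit that the higher-dimensional Bezout count requires the critical-point system $g=\delta,\ \partial g/\partial x_2=\cdots=\partial g/\partial x_d=0$ to be zero-dimensional; this is precisely what the genericity of $\delta$ and of the height direction must guarantee, and it is not automatic.
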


We observe the following simple corollary of Theorem \ref{th:MilnorThom}.

\begin{corollary}\label{co:FiniteOrContained}
Let $C\subset \R^3$ be a curve and let $U\subset \R^3$ be a variety. If $C\cap X$ is infinite then $C\subset X$.
\end{corollary}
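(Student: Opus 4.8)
The plan is to prove the contrapositive: assuming $C\not\subset U$, I would show that $C\cap U$ is finite. First, since over $\R$ every variety is cut out by a single polynomial, write $U=\mathbf{V}(g)$. If $C=\mathbf{V}(f_1,\ldots,f_j)$, then $C\cap U=\mathbf{V}(f_1,\ldots,f_j,g)$ is again a variety, and it is contained in $C$. Because $C\not\subset U$ by assumption, $g$ does not vanish identically on $C$, so $W:=C\cap U$ is a \emph{proper} subvariety of $C$.

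Next I would invoke irreducibility together with the chain definition of dimension. Decompose $W$ into its (finitely many) irreducible components $W_1,\ldots,W_r$; each $W_i$ is a nonempty irreducible variety with $W_i\subsetneq C$. If some $W_i$ had dimension at least $1$, there would be a nonempty irreducible variety $W_i'\subsetneq W_i$, and then $W_i'\subsetneq W_i\subsetneq C$ would be a chain of irreducible varieties of length two strictly below $C$, contradicting $\dim C=1$. Hence every $W_i$ has dimension $0$, i.e.\ is a minimal nonempty irreducible variety; such a variety is a single point, since otherwise it would properly contain the one-point variety $\{p\}$ for some $p\in W_i$. Therefore $W=W_1\cup\cdots\cup W_r$ is finite, and taking the contrapositive gives the claim: if $C\cap U$ is infinite then $C\subset U$.

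The only delicate point is the passage ``proper subvariety of an irreducible curve $\Rightarrow$ finite.'' The finiteness of the number of irreducible components can be treated as standard (Noetherianity of $\R[x_1,x_2,x_3]$), or bypassed entirely by applying Theorem \ref{th:MilnorThom} directly to $W$: since $W$ is defined by finitely many polynomials of bounded degree, it has finitely many connected components, and—having been shown to contain no irreducible subvariety of positive dimension—each connected component consists of a single point. I expect this dimension bookkeeping to be the only nontrivial part of the argument; everything else follows immediately from the definitions in Section \ref{sec:prelim}.
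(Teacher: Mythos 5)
Your proof is correct, but it takes a different route from the paper. You argue the contrapositive: if $C\not\subset U$, then $W=C\cap U$ is a proper subvariety of $C$, which you decompose into finitely many irreducible components (via Noetherianity; the paper's Theorem \ref{th:components} would serve the same purpose) and then use the chain definition of dimension twice — once to show no component can have dimension $\ge 1$ without producing a chain of length two inside the one-dimensional irreducible $C$, and once to show a zero-dimensional irreducible variety is a single point. The paper instead argues directly: setting $W=C\cap U$ and assuming $W$ infinite, it notes that $\dim W=0$ would force infinitely many connected components, contradicting the Milnor--Thom bound (Theorem \ref{th:MilnorThom}); hence $\dim W\ge 1$, and irreducibility of the one-dimensional $C$ gives $C=W\subset U$. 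Your version is more elementary and self-contained — it uses only the definitions from Section \ref{sec:prelim} and makes explicit the fact (that the paper's last sentence leaves implicit) that a proper subvariety of an irreducible one-dimensional variety must be finite — at the cost of invoking the irreducible decomposition; the paper's version is shorter because Milnor--Thom is already on hand. Your suggested shortcut of applying Milnor--Thom directly to $W$ is essentially the paper's argument, though the step ``each connected component is a single point'' still needs the small observation that a connected component with more than one point has a positive-dimensional Zariski closure inside $W$, which your chain argument then rules out; as written, your primary route is complete.
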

\begin{proof}
We set $W = C\cap U$ and assume that $W$ is infinite. By definition, $W$ is a variety. 
If $\dim W = 0$, then this variety has infinitely many connected components. 
Since this contradicts Theorem \ref{th:MilnorThom}, we have that $\dim W \ge 1$. 
Since $C$ is irreducible and one-dimensional, we conclude that $C = W \subset X$.
\end{proof}

Unlike in $\C^d$, there are several non-equivalent definitions for the degree of a variety in $\R^d$. 
To avoid this issue, we say that the \emph{complexity} of a variety $U\subset \R^d$ is the minimum integer $D$ that satisfies the following: 
There exist $k\le D$ polynomials $f_1,\ldots,f_k\in \R[x_1,\ldots,x_d]$, each of degree at most $D$, such that $U=\mathbf{V}(f_1,\ldots,f_k)$. In the past decade, the use of complexity is becoming more common. For example, see \cite{BGT11,SolyTao12}.

For the following result, see for example \cite[Chapter 4]{Sheffer22}. 
In the $O_{k,d}(\cdot)$-notation, the hidden constant may depend on $d$ and $k$.

\begin{theorem} \label{th:components}
Let $U\subset \R^d$ be a variety of complexity $k$.
Then $U$ is the union of $O_{k,d}(1)$ irreducible varieties. 
\end{theorem}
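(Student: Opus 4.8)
The plan is to reduce this real statement to a degree bound over $\C$, where intersection theory is clean, and then descend back to $\R$. The reason for passing to $\C$ is that the most natural induction over $\R$ — cut $U$ by a generic hyperplane $H$, so that $U\cap H$ is a complexity-$k$ variety in $\R^{d-1}$ and count its components by induction on $d$ — breaks down over the reals: a generic \emph{affine} hyperplane can miss an entire bounded irreducible component (think of a line far from a circle), so there is no single $H$ meeting every positive-dimensional component with a genuine drop in dimension. Over $\C$ this obstruction disappears, which is exactly why I would complexify first.

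So I would set $\tilde U = \mathbf{V}_{\C}(f_1,\dots,f_k)\subseteq \C^d$, using the same $\le k$ polynomials of degree $\le k$ that define $U$, so that $U=\tilde U\cap \R^d$. Over $\C$ the generic-hyperplane induction really does work: a generic hyperplane section meets every positive-dimensional component, drops its dimension by exactly one, and preserves both the complexity bound $k$ and the total degree. Running this induction on $d$ (equivalently, invoking the affine Bezout inequality) bounds the sum of the degrees of the complex-irreducible components of $\tilde U$ by $O_{k,d}(1)$; since each component has degree at least one, $\tilde U$ has only $N=O_{k,d}(1)$ complex-irreducible components $W_1,\dots,W_N$.

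Next I would descend to $\R$. Each real-irreducible component $C$ of $U$ satisfies $C\subseteq \tilde U\cap \R^d=\bigcup_{j}(W_j\cap \R^d)$, a finite union of real-Zariski-closed sets; since $C$ is real-irreducible, it is contained in a single $W_j\cap\R^d$. It therefore suffices to bound, for a fixed complex-irreducible $W$ of bounded degree, the number of real-irreducible components of the real variety $W\cap\R^d$. Here I would use the standard dichotomy: either the real points of $W$ are Zariski-dense in $W$, in which case $W\cap\R^d$ is itself real-irreducible and contributes a single component; or they are not, in which case $W\cap\R^d$ lies in a proper subvariety of $W$ of strictly smaller dimension, and I recurse on dimension. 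The base case is dimension $0$, where a bounded-complexity $0$-dimensional real variety is a finite set of size $O_{k,d}(1)$ by the Milnor--Thom bound of Theorem~\ref{th:MilnorThom} (each point is a connected component). The same bound also disposes of the $0$-dimensional (isolated) components of $U$ directly, since an isolated point of $U$ in the Zariski topology is a Euclidean-connected component of $U$.

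Summing the $O_{k,d}(1)$ real-irreducible components arising from each of the $N=O_{k,d}(1)$ complex components then yields the claimed bound. The complex degree bound in the second step is the quantitative engine, but it is standard; the main obstacle is the descent in the third step. Concretely, the hard part is making the Zariski-density dichotomy precise and, above all, controlling the complexity of the proper subvariety that carries the real points in the degenerate case, so that the induction on dimension closes with a constant depending only on $k$ and $d$. All of the genuinely real subtleties are concentrated in verifying that a single complex-irreducible component can spawn only boundedly many real-irreducible components.
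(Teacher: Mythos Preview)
The paper does not prove Theorem~\ref{th:components}; it is quoted as a known result with a reference to \cite[Chapter~4]{Sheffer22}. So there is no in-paper proof to compare against, and any correct argument you supply would be additional content rather than a reconstruction.

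Your plan---complexify, bound the number of complex irreducible components of $\tilde U$ by an affine B\'ezout inequality, then descend---is the standard route and is sound in outline. The step you yourself flag as the crux is indeed the only nontrivial one: for a complex irreducible $W$ of bounded degree, bounding the number of real irreducible components of $W\cap\R^d$. Your dichotomy (real points Zariski-dense in $W$ versus not) is correct, and in the dense case $W\cap\R^d$ really is real-irreducible. In the non-dense case, however, the complex Zariski closure of $W\cap\R^d$ is a proper subvariety of $W$ of strictly smaller dimension, but there is no a priori reason its \emph{degree} is controlled by $\deg W$ alone; a low-degree irreducible variety can contain proper subvarieties of arbitrarily high degree. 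One clean way to close this is to avoid recursing on $W$ altogether and instead observe that distinct real irreducible components of $U$ complexify to complex varieties whose top-dimensional parts are supported on distinct (unordered conjugate pairs of) complex irreducible components of $\tilde U$; together with the Milnor--Thom bound on $0$-dimensional pieces, this gives the $O_{k,d}(1)$ count directly. Either way, your sketch identifies the right skeleton and the right difficulty; what remains is a bookkeeping argument of the sort carried out in the cited reference.
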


We define a \emph{curve} to be an irreducible constant-complexity variety of dimension one. 
In Theorem \ref{th:FewDistancesBetweenConics} and Theorem \ref{th:PerpendicularPlanesFewDistances}, we implicitly consider sets of $m$ and $n$ points that determine $\Omega(\min\{m^{2/3}n^{2/3},m^2,n^2\})$ distinct distances.
In this context, the constant-complexity of the curves means that $m$ and $n$ may be assumed to be arbitrarily large with respect to the complexities of the curves. The hidden constant in the $\Omega(\cdot)$-notation may depend on the complexities of the curves.

Let $S\subset \R^d$. 
The \emph{Zariski closure} of $S$, denoted $\overline{S}$, is the smallest variety that contains $S$.
Specifically, every variety that contains $S$ also contains $\overline{S}$.
A set $S\subset \R^d$ is  \emph{semi-algebraic} if there exists a boolean function $\Phi(y_1,\ldots,y_t)$ and polynomials $f_1,\ldots,f_t\in \R[x_1,\ldots,x_d]$ such that
\[ p\in S \qquad \text{if and only if } \qquad \Phi(f_1(p)\ge 0,\ldots,f_t(p)\ge 0)=1. \]
The dimension of $S$ is $\dim \overline{S}$.

A \emph{standard projection} is a linear map $\pi:\R^n\to \R^m$ that keeps the first $m$ out of $n$ coordinates.
While the projection of a variety might not be a variety, it has other nice properties. 

\begin{theorem}\label{th:TarskiSeidenberg}
Let $U\subset \R^n$ be a variety of dimension $d$ and let $\pi:\R^n\to \R^m$ be a standard projection.   Then \\[2mm] 
(a) The projection $\pi(U)$ is a semi-algebraic set of dimension at most $d$. \\[2mm]
(b)  Let $d' = \dim \pi(U)$. Then there exists a variety $W \subset\R^m$ of dimension smaller than $d'$ that satisfies the following: 
For every $p \in \pi(U) \setminus W$, the set $\pi^{-1}(p)\cap U$ is  semi-algebraic of dimension at most $d - d'$.
\end{theorem}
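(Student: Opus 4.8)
The plan is to derive both parts from two standard pillars of real algebraic geometry: the Tarski--Seidenberg quantifier-elimination theorem, and the existence of a cylindrical algebraic decomposition (CAD) adapted to a given variety and to the coordinate projection $\pi$. Throughout I write $\R^n = \R^m \times \R^{n-m}$, with $\pi$ the projection onto the first factor, and I use that (as recalled in the preliminaries) $U = \mathbf{V}(f)$ for a single polynomial $f \in \R[x_1,\ldots,x_n]$.

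For part (a), I would first obtain semi-algebraicity. We have
\[ \pi(U) = \left\{ y \in \R^m \ :\ \exists\, z \in \R^{n-m},\ f(y,z) = 0 \right\}, \]
which is defined by a first-order formula over the reals in which only the variables $z$ are quantified. By the Tarski--Seidenberg theorem this formula is equivalent to a quantifier-free one, so $\pi(U)$ is semi-algebraic; this is precisely the assertion that the image of a semi-algebraic set under a projection is semi-algebraic. For the dimension bound, I would partition $U$ into finitely many semi-algebraic cells $C_1,\ldots,C_r$, each semi-algebraically homeomorphic to an open cube $(0,1)^{d_i}$, so that $\dim U = \max_i d_i = d$. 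A coordinate projection is a semi-algebraic map, and semi-algebraic maps do not increase dimension; hence $\dim \pi(C_i) \le \dim C_i$. Since $\pi(U) = \bigcup_i \pi(C_i)$ and the dimension of a finite union of semi-algebraic sets is the maximum of their dimensions, we get $\dim \pi(U) \le \max_i \dim C_i = d$.

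For part (b), I would refine the decomposition so that it is cylindrical with respect to $\pi$: choose a CAD of $\R^n$ that is compatible with $U$ and adapted to the projection dropping the last $n-m$ coordinates. Then $U$ is a finite disjoint union of cells $C_1,\ldots,C_r$ such that each $C_i$ projects onto a cell $\pi(C_i)$ of the induced decomposition of $\R^m$, and over every $p \in \pi(C_i)$ the fiber $\pi^{-1}(p) \cap C_i$ is a cell of one fixed dimension $\dim C_i - \dim \pi(C_i)$, constant along the base. Taking the maximum over the cells meeting a given fiber, we obtain, for each $p \in \pi(U)$,
\[ \dim\bigl(\pi^{-1}(p) \cap U\bigr) = \max_{\,i\,:\, p \in \pi(C_i)} \bigl( \dim C_i - \dim \pi(C_i) \bigr). \]
Let $G = \{ p \in \pi(U) : \dim(\pi^{-1}(p) \cap U) > d - d' \}$ be the locus where the fiber dimension exceeds the generic value. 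If $p \in G$, then some cell $C_i$ with $p \in \pi(C_i)$ satisfies $\dim C_i - \dim \pi(C_i) > d - d'$, which forces $\dim \pi(C_i) < \dim C_i - (d-d') \le d - (d-d') = d'$. Hence $G$ is contained in the union of those $\pi(C_i)$ with $\dim \pi(C_i) < d'$, so $\dim G < d'$. Setting $W = \overline{G}$, the Zariski closure, we have $\dim W = \dim G < d'$, and for every $p \in \pi(U) \setminus W$ the fiber $\pi^{-1}(p) \cap U$ is semi-algebraic (it is the intersection of $U$ with the variety $\pi^{-1}(p)$) and has dimension at most $d - d'$, as required.

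I expect the main obstacle to be the cylindrical decomposition adapted to $\pi$ used in part (b): producing cells over which the fiber dimension is genuinely constant --- rather than merely generically controlled --- is where the real content of the argument lives, and it is exactly what lets the dimension-counting for $W$ go through. Everything else, including the semi-algebraicity in part (a) and the dimension arithmetic, is then a routine consequence of the cell structure together with the facts that dimension is monotone under inclusion, additive in the cylindrical ``base $+$ fiber'' sense, and non-increasing under coordinate projection. I would also note that the bound in part (b) is only an upper bound: when $U$ is reducible, a top-dimensional component may map to a low-dimensional image, so the generic fiber dimension can be strictly smaller than $d - d'$; the argument above is insensitive to this, since it only ever uses $\dim C_i \le d$.
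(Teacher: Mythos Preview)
The paper does not give its own proof of this theorem: it simply cites external references (Basu--Pollack--Coste-Roy for part (a) and Bardwell-Evans--Sheffer for part (b)) and treats the result as background. So there is no ``paper's proof'' to compare against. Your sketch via Tarski--Seidenberg and a cylindrical algebraic decomposition adapted to the projection is exactly the standard route taken in those references, and the dimension-counting you do---bounding the bad fiber locus $G$ by the union of base cells $\pi(C_i)$ with $\dim\pi(C_i)<d'$, then taking $W=\overline{G}$---is correct. The one place to be slightly careful is in the last step: you want $W$ to be a variety of dimension $<d'$, and you pass from $\dim G<d'$ to $\dim\overline{G}<d'$. This is fine precisely because the paper \emph{defines} the dimension of a semi-algebraic set to be the dimension of its Zariski closure, so the two agree automatically; under other conventions one would need the (still standard) fact that the real Zariski closure of a semi-algebraic set has the same dimension.
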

For part (a), see for example  \cite[Section 11.3]{BPCR07}.
For part (b), see for example \cite[Section 7.1]{BES19}.

\ignore{We prove the contrapositive of part (c). 
That is, we assume that $\overline{\pi(U)}$ is reducible and show that $U$ is also reducible. 
By the assumption, there exist varieties $A,B\subsetneq \overline{\pi(U)}$ such that $A\cup B=\overline{\pi(U)}.$ 
We have that 
\[
U\subset\pi^{-1}(\overline{\pi(U)})=\pi^{-1}(A)\cup \pi^{-1}(B).
\]

By the definition of Zariski closure, we have that $\pi(U) \not\subset A$ and that $\pi(U) \not\subset B$.
This in turn implies that $U \not\subset \pi^{-1}(A)$ and that $U \not\subset \pi^{-1}(B)$. 
We set $U_1=\pi^{-1}(A)$ and $U_2=\pi^{-1}(B)$.
We note that $U_1\subsetneq U$, that $U_2\subsetneq U$, and that $U_1 \cup U_2 = U$.
Thus, $U$ is reducible.}

We say that a function $f:\R^d \to \R$ is \emph{smooth} if all its partial derivative of all orders exist.
We think of a function $g:\R^d \to \R^e$ as $e$ separate functions from $\R^d$ to $\R$ and say that $g$ is \emph{smooth} if each of these $e$ separate components is smooth. 

We recall that $S\subset \R^3$ is an \emph{arc} if there exists a smooth map $\gamma : (0,1) \to S$. We say that
$\gamma$ is a \emph{parameterization} of $S$. 
For the following lemma, see for example \cite{Raz20}.\footnote{We do not fully describe the simple proof of this lemma, since it would require many additional definitions. Briefly: $C$ has a finite number of singular points. After removing these points, each connected component is diffeomorphic either to $\R$ or to  $S^1$. Similarly, an arc is more commonly known as a connected one-dimensional smooth manifold.}

\begin{lemma}\label{le:SmoothParametrization}
Let $C\subset \R^n$ be a curve. Then $C$ is the union of finitely many arcs and points. 
\end{lemma}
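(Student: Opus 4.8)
The plan is to split $C$ into its finitely many singular points and its smooth part, break the smooth part into connected components, classify each component topologically, and then cut each component into arcs. The whole argument rests on the dimension theory of the preliminaries together with the classification of one-dimensional manifolds.

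First I would consider the singular locus $C_{\mathrm{sing}} \subset C$, the set of points at which $C$ fails to be a smooth embedded submanifold of $\R^n$ (equivalently, where the Jacobian of a defining system drops rank). This is itself a variety, and since $C$ is irreducible and one-dimensional, $C_{\mathrm{sing}}$ is a \emph{proper} subvariety, hence $\dim C_{\mathrm{sing}} \le 0$. If $C_{\mathrm{sing}}$ were infinite it would have infinitely many connected components, contradicting Theorem \ref{th:MilnorThom}; therefore $C_{\mathrm{sing}}$ is a finite set of points. Setting $C^{*} = C \setminus C_{\mathrm{sing}}$, at each point $C$ is locally a smooth one-dimensional submanifold of $\R^n$, so $C^{*}$ is a smooth one-dimensional manifold (without boundary). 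Because $C$ has constant complexity, Theorem \ref{th:MilnorThom} bounds its number of connected components, and deleting the finitely many points of $C_{\mathrm{sing}}$ increases this count only by a finite amount; hence $C^{*}$ has finitely many connected components $M_1, \dots, M_k$.

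Next I would invoke the classification of connected smooth one-dimensional manifolds: each $M_i$ is diffeomorphic either to the open interval $(0,1)$ (equivalently to $\R$) or to the circle $S^1$. If $M_i \cong (0,1)$, the diffeomorphism is a smooth bijective map from $(0,1)$ onto $M_i$, so $M_i$ is itself an arc. If $M_i \cong S^1$, then choosing any point $p_i \in M_i$ and deleting it yields a set diffeomorphic to $(0,1)$, i.e.\ an arc, so that $M_i$ is the union of one arc and the single point $p_i$. Collecting the pieces, $C$ is the union of the finite set $C_{\mathrm{sing}}$, the arcs coming from the interval-type components, and, for each circle-type component, one arc together with one extra point. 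Since $k$ is finite and $C_{\mathrm{sing}}$ is finite, this exhibits $C$ as a union of finitely many arcs and points, as required.

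I expect the main obstacle to be the topological classification step: justifying that the smooth locus $C^{*}$ is genuinely a one-dimensional manifold at every point (so that the classification of one-manifolds applies) and carefully tracking that the number of components stays finite after removing $C_{\mathrm{sing}}$, which is where Theorem \ref{th:MilnorThom} is essential. The algebro-geometric input that the singular locus of an irreducible curve is a finite point set is the other delicate point, but it follows cleanly from $\dim C_{\mathrm{sing}} \le 0$ combined with the component bound of Theorem \ref{th:MilnorThom} (in the same spirit as Corollary \ref{co:FiniteOrContained}).
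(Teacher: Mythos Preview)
Your proposal is correct and follows essentially the same route as the paper, which defers the full proof to \cite{Raz20} and only sketches it in a footnote: remove the finitely many singular points, note that each remaining connected component is diffeomorphic to $\R$ or to $S^1$, and use the classification of one-dimensional manifolds. Your write-up simply fleshes out that sketch, invoking Theorem~\ref{th:MilnorThom} explicitly to bound the number of singular points and components.
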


\section{First distinct distances bounds}\label{sec:FirstBounds}

In this section, we study two first distinct distances results. 
These have shorter proofs and do not rely on involved tools such as Lemma \ref{le:SpecialFormOrExpands}.

\begin{lemma}\label{le:ParallelPlanes}
Let $C_1,C_2\subset \R^3$ be planar curves that are contained in parallel planes. \\[2mm]
(a) If $C_1$ and $C_2$ are parallel lines, orthogonal lines, or aligned circles, then $(C_1,C_2)$ spans few distances. \\[2mm] 
(b) If $(C_1,C_2)$ is not one of the configurations from part (a), then $(C_1,C_2)$ spans many distances.
\end{lemma}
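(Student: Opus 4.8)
\textbf{Proof plan for Lemma \ref{le:ParallelPlanes}.}

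The plan is to leverage Theorem \ref{th:PachDeZeeuw} by translating the problem about curves in parallel planes in $\R^3$ into a problem about curves in a single plane $\R^2$. First I would set up coordinates so that $C_1$ lies in the plane $z = 0$ and $C_2$ lies in the plane $z = h$ for some constant $h \ge 0$. For a point $p = (p_x,p_y,0)$ on $C_1$ and $q = (q_x,q_y,h)$ on $C_2$, the squared distance is $(p_x-q_x)^2 + (p_y-q_y)^2 + h^2$. Thus, if we let $\pi$ be the vertical projection onto the $xy$-plane, then $|pq|^2 = |\pi(p)\pi(q)|^2 + h^2$, so $D(\pts_1,\pts_2)$ equals the number of distinct \emph{planar} distances spanned by $\pi(\pts_1) \times \pi(\pts_2)$. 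Since $\pi$ restricted to each $C_i$ is injective (each $C_i$ is contained in a plane transverse to the $z$-direction — if a $C_i$ were contained in a plane parallel to the $z$-axis it would be a vertical line, and I would handle that degenerate possibility separately, noting a line orthogonal to the other plane reduces to the concentric-circles / orthogonal-lines analysis), the curves $C_1' = \pi(C_1)$ and $C_2' = \pi(C_2)$ in $\R^2$ carry point sets of the same sizes $m$ and $n$, and $(C_1,C_2)$ spans few/many distances in $\R^3$ if and only if $(C_1',C_2')$ spans few/many distances in $\R^2$.

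With this reduction in hand, part (b) follows from Theorem \ref{th:PachDeZeeuw}: if $(C_1',C_2')$ is not a pair of parallel lines, orthogonal lines, or concentric circles, then $D(\pts_1',\pts_2') = \Omega(\min\{m^{2/3}n^{2/3},m^2,n^2\})$, hence so does $D(\pts_1,\pts_2)$, so $(C_1,C_2)$ spans many distances. It therefore remains to check that the three exceptional planar configurations pull back exactly to the three configurations in part (a). A curve $C_i \subset \R^3$ contained in a plane transverse to $z$ is a line (resp.\ an ellipse, parabola, hyperbola, circle) precisely when its projection $C_i'$ is; moreover projection preserves parallelism and orthogonality of lines. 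A pair of projected lines is parallel (resp.\ orthogonal) iff the original lines are parallel (resp.\ orthogonal) as spatial lines — here "orthogonal lines" means the direction vectors are orthogonal, not that the lines meet. The one point requiring care is the circle case: I would show that $\pi(C_1)$ and $\pi(C_2)$ are concentric circles iff $C_1$ and $C_2$ are aligned circles, using that the projection of a planar circle is an ellipse whose semi-axes are $r$ and $r\cos\theta$ where $\theta$ is the tilt of its plane, so the projection is a circle iff $\theta = 0$, i.e.\ the circle's plane is parallel to $z=0$; then two such circles project to concentric circles iff their centers lie on a common vertical line, which is exactly the definition of aligned circles.

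For part (a), I would exhibit the point configurations directly, or equivalently cite that the planar exceptional configurations admit such sets and pull them back via $\pi^{-1}$ on each curve. Concretely: for parallel lines, place $m$ equally spaced points on one and $n$ equally spaced (same spacing) points on the other — then all pairwise distances lie in an arithmetic-progression-like set of size $O(m+n)$; for orthogonal lines, take $\{(i,0,0) : i \in [\sqrt{m}]^2\text{-style indexing}\}$... more simply, on $\ell_1 = \{(t,0,0)\}$ take $t \in \{1,\dots\}$ and on $\ell_2=\{(0,u,h)\}$ take $u$ so that $t^2$ and $u^2$ each range over $O(m+n)$ values while $t^2+u^2+h^2$ takes $O(m+n)$ values — use $t \in \{\sqrt{1},\sqrt 2,\dots\}$ is not algebraic, so instead use the standard trick of integer coordinates giving $O(m^2)$... actually the clean statement is that on perpendicular lines one may take points with squared coordinates forming arithmetic progressions so sums take linearly many values; and for aligned circles, use concentric-circle radii forming a suitable set as in the classical $\R^2$ construction of Mathialagan–Sheffer. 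The main obstacle I anticipate is not the lower-bound direction (that is essentially immediate from Theorem \ref{th:PachDeZeeuw}) but rather the bookkeeping in part (b): verifying that \emph{every} pair of planar curves in parallel planes whose projections form concentric circles must itself be aligned circles, and in particular ruling out that a non-circular conic could project onto a circle — this is where the semi-axis computation above is needed and must be done carefully, including the degenerate sub-case where one curve lies in a plane parallel to the $z$-axis.
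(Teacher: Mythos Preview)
Your approach is exactly the paper's: rotate so the parallel planes become $z=0$ and $z=h$, note that $|pq|^2=|\pi(p)\pi(q)|^2+h^2$ so $D(\pts_1,\pts_2)=D(\pi(\pts_1),\pi(\pts_2))$, and invoke Theorem~\ref{th:PachDeZeeuw}. Your anticipated obstacles are illusory, though: once the containing planes are $z=0$ and $z=h$, the projection $\pi$ restricted to each plane merely drops a constant coordinate and is an \emph{isometry}, so $\pi(C_i)$ is a circle (or line) if and only if $C_i$ is---there is no tilt, no semi-axis computation, and no ``degenerate vertical'' sub-case to handle.
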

\begin{proof}
(a) To see that parallel lines, orthogonal lines, and aligned circles span few distances, see for example \cite{MS20}.

(b) Let $H_1$ and $H_2$ be the planes that contain $C_1$ and $C_2$, respectively.
We rotate and translate $\R^3$ so that $H_1$ becomes the plane $\mathbf{V}(z)$.
Then there exists $d\in \R$ such that $H_2 = \mathbf{V}(z-d)$.
Note that rotations and translations do not affect distances. 

We consider the points 
$a=(a_x,a_y,0)\in C_1$ and $b=(b_x,b_y,d)\in C_2$. 
We also consider the standard projection $\pi(x,y,z)=(x,y)$.
The square of the distance between $a$ and $b$ equals to the square of the distance between $\pi(a)$ and $\pi(b)$ plus $d^2$. 
Since the squares of all the distances change by the same amount, we get that $D(\pts_1,\pts_2) = D(\pi(\pts_1),\pi(\pts_2))$.

By Theorem \ref{th:PachDeZeeuw}, $(\pi(C_1),\pi(C_2))$ spans few distances if they are parallel lines, orthogonal lines, or concentric circles. 
Otherwise, $(\pi(C_1),\pi(C_2))$ spans many distances.
If $\pi(C_1)$ and $\pi(C_2)$ are parallel or orthogonal lines, so are $C_1$ and $C_2$.
If $\pi(C_1)$ and $\pi(C_2)$ are concentric circles then $C_1$ and $C_2$ are aligned circles.
\end{proof}

We next prove Lemma \ref{le:LineAlgebraic}. We first recall the statement of this lemma. 
\vspace{2mm}

\noindent {\bf Lemma \ref{le:LineAlgebraic}.}
\emph{Let $\ell \subset \R^3$ be a line and let $C\subset \R^3$ be a curve. \\[2mm]
(a) If $C$ is contained in a plane orthogonal to $\ell$ or in a cylinder centered around $\ell$, then $(C,\ell)$ spans few distances. \\[2mm]
(b) If $(C,\ell)$ is not one of the configurations from part (a), then $(C,\ell)$ spans many distances.}
\begin{proof}
For part (a), see Lemmas \ref{le:LineCylinderFewDistances} and \ref{le:LineOrthogonalPlaneFewDistances} below. 
For part (b), we assume that $C$ is not contained in a plane orthogonal to $\ell$ or in a cylinder centered around $\ell$.
By rotating $\R^3$, we may assume that $\ell$ is the $z$-axis, without changing any distances.
We consider a polynomial $f\in\R[x,y,z]$ such that $\mathbf{V}(f) = C$. 

We consider a point $p\in \R^3$ and note that there is one circle $C_p$ that contains $p$ and has axis $\ell$. Moving $p$ along $C_p$ does not change the distances between $p$ and the points of $\ell$.
In particular, we may replace $p=(p_x,p_y,p_z)$ with the point $\left(\sqrt{p_x^2+p_y^2},0,p_z\right)$.
Thus, the distances between $\ell$ and $C$ are the same as the distances between $\ell$ and the set
\[ C' = \{(\sqrt{x^2+y^2},0,z)\ :\ f(x,y,z)=0 \}. \]
Since $\ell$ and $C'$ are contained in the plane $\mathbf{V}(y)$, we wish to apply Theorem \ref{th:PachDeZeeuw}. 
However, $C'$ may not be a curve. 

We denote the coordinates of $\R^4$ as $X,Y,Z,W$ and consider the variety
\[ U=\mathbf{V}(W^2-X^2-Y^2,f(X,Y,Z)) \subset \R^4. \]
Since $f$ defines a curve in $\R^3$ and $X,Y$ determine $W$ up to a sign, we get that $\dim U =1$.

We define the standard projection $\pi_2(X,Y,Z,W)=(Z,W)$.
We note that a point $(x,0,z)$ is in $C'$ if and only if the points $(z,\pm x)$ are in 
 $\pi_2(U)$. Since $C$ is not contained in a plane orthogonal to $\ell$, Corollary \ref{co:FiniteOrContained} implies that $C$ intersects such a plane in $O(1)$ points.
Thus, every point of $\pi_2(U)$ has $O(1)$ points of $U$ projected to it. 
Theorem \ref{th:TarskiSeidenberg} implies that $\overline{C'}$ is a one-dimensional variety.
Since $\overline{C'}$ may be reducible, it may not be a curve.

Since $C$ contains $O(1)$ points from every plane orthogonal to $\ell$, we get that $\overline{C'}$ does not contain lines that are orthogonal to $\ell$. 
Since $C$ is not contained in cylinders that are centered at $\ell$, Corollary \ref{co:FiniteOrContained} implies that $C$ intersects such a cylinder in $O(1)$ points.
This in turn implies that $\overline{C'}$ does not contain lines that are parallel to $\ell$.

Consider a set $\pts$ of $n$ points on $C$. 
Let $\pts'$ be the set of points of $\pts$ after taking them to $C'$ as described above. 
Since $O(1)$ points of $C$ are taken to the same point of $C'$, we have that $|\pts'|=\Theta(n)$.
Theorem \ref{th:components} implies that $C'$ consists of finitely many components. 
Thus, there exists an irreducible component of $C'$ that contains $\Theta(n)$ points of $\pts'$.
We complete the proof by separately applying Theorem \ref{th:PachDeZeeuw} with $\ell$ and each of the one-dimensional irreducible components of $C'$.

\end{proof}

\section{Functions of a special form}\label{sec:SpecialForm}

In this section, we study functions of a special form, as defined in \eqref{eq:SpecialForm}.
We begin by proving Lemma \ref{le:DerivativeTest}, which provides a test for checking whether a function is not of a special form.
We first recall the statement of this lemma, where $\rho_s$ is the first partial derivative of $\rho$ with respect to $s$. 
\vspace{2mm}

\noindent {\bf Lemma \ref{le:DerivativeTest}.}
\emph{Let $\rho:(0,1)^2\to \R$ be a smooth function of a special form, such that $\rho_s\not\equiv 0$ and $\rho_t\not\equiv 0$ in every open neighborhood in $\R^2$. Then there exists an open neighborhood in $\R^2$ in which }
\[
\frac{\partial^2 (\ln |\rho_s/\rho_t|)}{\partial s \partial t}\equiv 0.
\]
\begin{proof}
By definition, there exist open intervals $I_1,I_2,I_3$ and smooth $\phi_1:I_1\to \R,\phi_2:I_2\to \R$, $\phi_3:I_3\to \R$, such that every $(s,t)\in I_1\times I_2$ satisfies 
\[
\rho(s,t)=\phi_3(\phi_1(s)+\phi_2(t)).
\]

By the assumptions on $\rho_s$ and $\rho_t$, there exists an open set $U \subset I_1\times I_2$ in which $\rho_s$ and $\rho_t$ are never zero. 
In $U$, we have that
\begin{align*}
\rho_s
&=\frac{\partial [\phi_3(\phi_1(s)+\phi_2(t))]}{\partial s}
=\phi_3'(\phi_1(s)+\phi_2(t))\cdot \phi_1'(s), \\[2mm]
\rho_t
&=\frac{\partial [\phi_3(\phi_1(s)+\phi_2(t))]}{\partial t}
=\phi_3'(\phi_1(s)+\phi_2(t))\cdot \phi_2'(t).
\end{align*}
With these equations, we get that 
\begin{align*}
  \frac{\partial^2 (\ln |\rho_s/\rho_t|)}{\partial s \partial t} 
  =\frac{\partial^2 (\ln |\phi_1'(s)/\phi_2'(t)|)}{\partial s \partial t} 
  =\frac{\partial^2 (\ln |\phi_1'(s)|-\ln|\phi_2'(t)|)}{\partial s \partial t} 
  =0,
\end{align*}
for all $(s,t)\in U.$
\end{proof}

\ignore{We also require the following corollary of Lemma \ref{le:DerivativeTest}. 

\begin{corollary}\label{co:DerivativeTestClosedUnderComposition}
Consider intervals $I_1$ and $I_2$ and a smooth  $\rho:I_1\times I_2 \to \R$ that satisfies
\[
\frac{\partial^2 (\ln |\rho_s/\rho_t|)}{\partial s \partial t}\equiv 0.
\]
Consider smooth $f,g:\R \to \R$ and set $\widetilde{\rho}(s,t)=\rho(f(s),g(t))$. Then
\[
\frac{\partial^2 (\ln |\widetilde{\rho}_s/\widetilde{\rho}_t|)}{\partial s \partial t}\equiv 0.
\]
\end{corollary}
\begin{proof}
Let $\hat{\rho}(s,t)=\rho(f(s),t)$. 
Lemma \ref{le:DerivativeTest} implies that
\begin{align*}
\frac{\partial^2 (\ln \left|\hat{\rho}_s(s,t)/\hat{\rho}_t(s,t)\right|)}{\partial s \partial t}
&=\frac{\partial^2 (\ln |\rho_s(f(s),t) f'(s)/\rho_t(f(s),t)|)}{\partial s \partial t}\\[2mm]
&=\frac{\partial^2 \ln|f'(s)|}{\partial s \partial t}+\frac{\partial^2 (\ln |\rho_s(f(s),t)/\rho_t(f(s),t)|)}{\partial s \partial t} =0.
\end{align*}

To complete the proof, we repeat the above argument with $\widetilde{\rho}(s,t)=\hat{\rho}(s,g(t))$.
\end{proof} } 

We next use special forms to study configurations that span few distances.
Recall that the distance function of two arc parameterizations is defined in  \eqref{eq:DistanceFunc}.

\begin{lemma}[Special form implies few distances]\label{le:SpecialFormFewDistances}
Let $A_1,A_2\subset \R^3$ be arcs and let $\gamma_1,\gamma_2$ be parameterizations of those arcs, respectively. 
If the distance function of $\gamma_1$ and $\gamma_2$ is of a special form then $(A_1,A_2)$ spans few distances.
\end{lemma}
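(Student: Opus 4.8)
The plan is to exploit the special-form structure of the distance function to explicitly build point sets that span only $O(m+n)$ distinct distances. Suppose the distance function $\rho(s,t)$ of $\gamma_1,\gamma_2$ is of a special form, so there are open intervals $I_1,I_2,I_3$ and smooth functions $\phi_1,\phi_2,\phi_3$ with $\rho(s,t)=\phi_3(\phi_1(s)+\phi_2(t))$ for all $(s,t)\in I_1\times I_2$. First I would observe that, since $\rho$ records squared distances, $D(\pts_1,\pts_2)$ for $\pts_1=\gamma_1(S)$ and $\pts_2=\gamma_2(T)$ with $S\subseteq I_1$, $T\subseteq I_2$ is exactly the number of distinct values taken by $\phi_3$ on the set $\phi_1(S)+\phi_2(T)$. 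In particular it is at most $|\phi_1(S)+\phi_2(T)|$, the size of the sumset. So it suffices to choose $S$ and $T$ so that $\phi_1(S)$ and $\phi_2(T)$ are arithmetic progressions with the same common difference: then $\phi_1(S)+\phi_2(T)$ is itself an arithmetic progression of size $O(m+n)$, and we are done.

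The key steps, in order, are: (1) pick a point $s_0\in I_1$ where $\phi_1'(s_0)\neq 0$, if one exists — then $\phi_1$ is a local diffeomorphism near $s_0$, so for any prescribed arithmetic progression $\{c + j\delta : 0\le j < m\}$ lying in a small enough neighborhood of $\phi_1(s_0)$ we can take $S=\phi_1^{-1}$ of that progression, a set of $m$ points in $I_1$ with $\phi_1(S)$ an AP of step $\delta$; (2) likewise pick $t_0\in I_2$ with $\phi_2'(t_0)\neq 0$ and build $T$ of size $n$ with $\phi_2(T)$ an AP of the same step $\delta$; by shrinking the neighborhoods and $\delta$ we keep everything inside $I_1,I_2$ and keep $\phi_1(S)+\phi_2(T)$ inside $I_3$; (3) conclude $\phi_1(S)+\phi_2(T)$ has at most $m+n-1$ elements, hence $D(\pts_1,\pts_2)\le m+n-1 = O(m+n)$. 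Finally, since $A_1,A_2$ are arcs (hence infinite), and we have produced, for every $m,n$, sets of exactly $m$ and $n$ points with a linear number of distances, the configuration $(A_1,A_2)$ spans few distances by definition.

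The one case this misses is when $\phi_1'\equiv 0$ or $\phi_2'\equiv 0$ on the whole interval. If $\phi_1'\equiv 0$ then $\phi_1$ is constant, so $\rho(s,t)=\phi_3(\mathrm{const}+\phi_2(t))$ depends only on $t$; then for \emph{any} choice of $m$ points on $A_1$ and $n$ points on $A_2$, each point of $A_1$ sees the same $\le n$ distances, so $D(\pts_1,\pts_2)\le n = O(m+n)$, and again few distances follows. The symmetric argument handles $\phi_2'\equiv 0$. So the degenerate case is actually easier than the generic one.

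I expect the main (though still modest) obstacle to be the bookkeeping in step (2)--(3): one must choose the neighborhoods of $s_0$ and $t_0$, the common difference $\delta$, and the starting points of the two progressions simultaneously so that (i) $S\subseteq I_1$ and $T\subseteq I_2$, (ii) $\phi_1(S)+\phi_2(T)\subseteq I_3$ so that the special-form identity applies to every pair, and (iii) both progressions genuinely have $m$ and $n$ distinct terms. All of this follows from continuity and the inverse function theorem by taking $\delta$ small enough, but it is the only place where care is required. Everything else — the sumset bound $|X+Y|\le |X|+|Y|-1$ for APs with equal step, and the reduction from distances to squared distances — is immediate.
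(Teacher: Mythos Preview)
Your proof is correct and follows essentially the same approach as the paper: construct arithmetic progressions with a common step inside the images $\phi_1(I_1)$ and $\phi_2(I_2)$, pull them back to parameter values, and use the sumset bound $|X+Y|\le m+n-1$; the degenerate constant-$\phi_i$ case is handled separately in both. The only cosmetic difference is that the paper finds preimages by noting that $\phi_i(I_i)$ is an interval (so any nearby value is hit), whereas you invoke the inverse function theorem at a point with nonvanishing derivative---both work equally well here.
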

\begin{proof}
Let $\rho(s,t)$ be the distance function of $\gamma_1$ and $\gamma_2$. Let $I_1,I_2,I_3\subseteq (0,1)$ and $\phi_1,\phi_2,\phi_3$ be as in the definition of $\rho(s,t)$ having a special form.
By definition, $\phi_1(I_1)$ and $\phi(I_2)$ are nonempty intervals in $\R$. 
We first consider the case where $\phi_1(I_1)$ is a single point. 
In this case, the choice of point from $\gamma_1(I_1)$ does not affect the distance with any point of $\gamma_2(I_2)$. 
Thus, the number of distinct distances between $m$ points from $\gamma_1(I_1)$ and $n$ points from $\gamma_2(I_2)$ is at most $n$. 
A symmetric argument shows that, when $\phi_2(I_2)$ is a single point, there are at most $m$ distances. 

It remains to consider the case where both  $\phi_1(I_1)$ and $\phi_2(I_2)$ are intervals with infinitely many points. 
Let $p\in \phi_1(I_1)$ and $q\in \phi_2(I_2)$ be interior points of those intervals. 
Let $\eps>0$ be sufficiently small, as described below.
For $1\le i \le m$, we set $x_i=p+i\eps$.
For $1\le i \le n$, we set $x'_i=q+i\eps$.
When $\eps$ is sufficiently small, we have that $x_1,\ldots,x_m\in \phi_1(I_1)$ and $x'_1,\ldots,x'_n\in \phi_2(I_2)$. 
Let $s_i\in I_1$ satisfy $\phi_1(s_i)=x_i$ and let $t_j\in I_2$ satisfy $\phi_2(t_j)=x'_j$.
Then
\begin{align*}
\rho(s_i,t_j) =\phi_3(\phi_1(s_i)_+\phi_2(t_j)) =\phi_3(x_i+x'_j)
=\phi_3((x+x')+(i+j)\eps).
\end{align*}
We note that the expression $i+j$ has $m+n-1$ distinct values.
This implies that $\rho(s_i,t_j)$ has at most $m+n-1$ distinct values.
We conclude that the sets $\pts_1 = \{x_1,\ldots,x_m\} \subset A_1$ and $\pts_2 = \{x_1',\ldots,x_n'\}\subset A_2$ span $O(m+n)$ distinct distances.
\end{proof}

The converse of Lemma \ref{le:SpecialFormFewDistances} holds when the two arcs are contained in curves. 

\begin{lemma} \label{le:SpecialFormOrManyDist}
Let $A_1,A_2\subset \R^3$ be arcs and let $\gamma_1,\gamma_2$ be parameterizations of those arcs, respectively. 
Let $C_1$ and $C_2$ be curves that contain $A_1$ and $A_2$, respectively. 
If the distance function of $\gamma_1$ and $\gamma_2$ is not of a special form then $(A_1,A_2)$ spans many distances.
\end{lemma}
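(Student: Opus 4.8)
The plan is to prove the contrapositive: if $(A_1,A_2)$ spans few distances, then the distance function $\rho$ of $\gamma_1$ and $\gamma_2$ is of a special form. So suppose that for every $m,n$ there are sets $\pts_1\subset A_1$ and $\pts_2\subset A_2$ with $|\pts_1|=m$, $|\pts_2|=n$, and $D(\pts_1,\pts_2)=O(m+n)$. First I would feed this hypothesis into Lemma \ref{le:SpecialFormOrExpands}: since $(A_1,A_2)$ does \emph{not} span many distances (the linear bound $O(m+n)$ contradicts $\Omega(\min\{m^{2/3}n^{2/3},m^2,n^2\})$ once $m,n$ are large), we cannot be in case (a) of that lemma, so we must be in case (b), i.e.\ $\rho(s,t)$ is of a special form. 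That already does it — provided Lemma \ref{le:SpecialFormOrExpands} is stated for arcs, which it is.

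Wait: there is a subtlety. Lemma \ref{le:SpecialFormOrExpands} is phrased as a dichotomy about arcs $A_1, A_2$ directly, so it applies as stated and no passage to curves is needed for that step. The role of the hypothesis that $A_1\subset C_1$ and $A_2\subset C_2$ with $C_1,C_2$ curves must therefore lie elsewhere — presumably it is needed so that the ``few distances'' terminology for arcs is compatible with the way Lemma \ref{le:SpecialFormOrExpands} was proved (which may invoke the Pach--de~Zeeuw machinery, and that requires the relevant sets to lie on constant-complexity curves). So the clean way to organize the proof is: assume $(A_1,A_2)$ spans few distances; apply Lemma \ref{le:SpecialFormOrExpands} to $A_1,A_2$; case (a) is impossible because spanning many distances and spanning few distances are mutually exclusive for $m,n$ large (one gives $\Omega(\min\{m^{2/3}n^{2/3},m^2,n^2\})$, the other $O(m+n)$, and $\min\{m^{2/3}n^{2/3},m^2,n^2\}$ grows faster than $m+n$); hence case (b) holds and $\rho$ is of a special form. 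This contradicts the assumption that $\rho$ is not of a special form, completing the contrapositive.

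The only genuinely non-routine point is verifying that ``spans many'' and ``spans few'' are incompatible, i.e.\ that $\min\{m^{2/3}n^{2/3},m^2,n^2\}=\omega(m+n)$. This is an elementary check: by symmetry assume $m\le n$; if $n\le m^2$ then $\min\{m^{2/3}n^{2/3},m^2,n^2\}=m^{2/3}n^{2/3}$, and since $n\le m^2$ we have $n+m\le 2n \le 2m^{2/3}n^{2/3}\cdot (m/n)^{1/3}\cdot$(something) — more simply, $m^{2/3}n^{2/3}\ge m^{2/3}n^{2/3}$ and, using $m\le n$, $m^{2/3}n^{2/3}\ge m^{4/3}$, which dominates $m+n$ only once we also use $n\le m^2$ to get $n^{2/3}\le m^{4/3}$, hence $m^{2/3}n^{2/3}\le m^2$; the relevant inequality is that $m^{2/3}n^{2/3}/(m+n)\to\infty$ as $m,n\to\infty$ with $n\le m^2$, which follows since $m^{2/3}n^{2/3}\ge (mn)^{2/3}\ge \big(\tfrac{m+n}{2}\big)^{?}$ — in any case one just notes $(mn)^{2/3}\ge \big(\max\{m,n\}\big)^{2/3}\big(\min\{m,n\}\big)^{2/3}$ and $\min\{m,n\}\ge \sqrt{\max\{m,n\}}$ in the regime $n\le m^2$, giving $(mn)^{2/3}\ge \max\{m,n\}^{1}\cdot\max\{m,n\}^{0}\cdot\ldots$. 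I would not belabor this in the final text; a one-line remark that $\min\{m^{2/3}n^{2/3},m^2,n^2\}$ is superlinear in $m+n$ for $m,n$ large suffices, and the main obstacle — if any — is simply making sure the quantifiers in the definitions of ``spans few/many distances'' are matched correctly (``for every $m,n$ there exist sets'' versus ``for every sets''), so that the contradiction is genuine. Since ``spans few'' is an existential statement over point sets while ``spans many'' is universal, a single pair $m,n$ with a witnessing linear-size configuration already contradicts spanning many distances, which is exactly what Lemma \ref{le:SpecialFormOrExpands}(a) would assert.
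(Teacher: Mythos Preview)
Your argument is circular. In this paper, Lemma~\ref{le:SpecialFormOrExpands} is announced in the introduction as a key tool, but it is not proved there; Lemma~\ref{le:SpecialFormOrManyDist} \emph{is} the place where that dichotomy is actually established (under the additional hypothesis that the arcs lie on algebraic curves). So invoking Lemma~\ref{le:SpecialFormOrExpands} to deduce Lemma~\ref{le:SpecialFormOrManyDist} assumes exactly what you are asked to prove. Your own instinct was correct when you wrote that the curve hypothesis ``must therefore lie elsewhere'': it lies in the proof itself, which needs the algebraicity of $C_1$ and $C_2$ to invoke the Elekes--Szab\'o machinery.

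The paper's proof is substantial and does not reduce to a one-line appeal. One rotates so that neither curve lies in a plane parallel to the $yz$-plane, then encodes pairs $(p,q,\Delta)\in C_1\times C_2\times\R$ with $|pq|^2=\Delta$ as a two-dimensional variety $U\subset\R^7$. Projecting to the coordinates $(p_x,q_x,\Delta)$ gives a two-dimensional semi-algebraic set whose Zariski closure is $\mathbf{V}(F)$ for some polynomial $F$; one checks (using that $C_1,C_2$ are not a circle and its axis, which follows since $\rho$ is not of a special form) that $F$ genuinely depends on all three variables. Then Theorem~\ref{th:CartesianProduct} (the Raz--Sharir--de~Zeeuw structure theorem) applies. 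In case~(i), a Cauchy--Schwarz energy argument converts the quadruple bound into the desired $\Omega(\min\{m^{2/3}n^{2/3},m^2,n^2\})$ lower bound on $D(\pts_1,\pts_2)$. In case~(ii), the local product structure $\varphi_1(p_x)+\varphi_2(q_x)+\varphi_3(\Delta)=0$ is pulled back through $\gamma_{1,x},\gamma_{2,x}$ to exhibit $\rho$ as $\phi_3^{-1}(\phi_1'(\gamma_{1,x}(s))+\phi_2'(\gamma_{2,x}(t)))$, contradicting the assumption that $\rho$ is not of a special form. The curve hypothesis is used throughout: to define $F$, to bound preimage sizes via Milnor--Thom, and to make Theorem~\ref{th:CartesianProduct} applicable.
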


To prove Lemma \ref{le:SpecialFormOrManyDist}, we rely on the following result of Raz, Sharir, de Zeeuw \cite[Sections 2.1 and 2.3]{RSdZ16}.
See also Raz \cite[Lemma 2.4]{Raz20}.
\begin{theorem} \label{th:CartesianProduct}
Let $F \in \R[x, y, z]$ be a constant-degree irreducible polynomial, such that all first partial derivatives of $F$ are not identically zero.
Then at least one of the following two cases holds.\\[2mm]
(i) For all $A, B \subset \R$ with $|A| = m$ and $|B| = n$, we have that
\begin{multline*}
   \left|\left\{(a,a',b,b')\in A^2 \times B^2 \mid \exists c\in \R :  F(a,b,c)=F(a',b',c)=0 \right\}\right| = O\left(m^{4/3}n^{4/3}+m^2+n^2\right). 
\end{multline*}
(ii) There exists a one-dimensional variety $Z^* \subset \mathbf{V}(F)$ of degree $O(1)$ that satisfies the following.
    For every $p \in  \mathbf{V}(F) \setminus Z^*$, there exist open intervals $I_1, I_2, I_3 \subset \R$ and real-analytic functions $\varphi_1,\varphi_2,\varphi_3$ with analytic inverses such that $p \in I_1\times I_2\times I_3$ and for all $(x, y, z) \in I_1\times I_2\times I_3$ we have
    \[ (x, y, z) \in \mathbf{V}(F) \quad \text{ if and only if } \quad \varphi_1(x) + \varphi_2(y) + \varphi_3(z) = 0. \]
\end{theorem}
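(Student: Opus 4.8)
The plan is to recast the quadruple count as a point--curve incidence problem and then invoke a Szemer\'edi--Trotter-type bound, with the structural alternative (ii) arising precisely as the obstruction to that bound. Fix a pair $(b,b')\in B^2$ and eliminate the shared variable $c$: away from a bounded exceptional set, the existence of $c$ with $F(a,b,c)=F(a',b',c)=0$ is captured by the vanishing of the resultant $R_{b,b'}(x,x')=\mathrm{Res}_c\bigl(F(x,b,c),F(x',b',c)\bigr)$. Thus the quantity to bound equals, up to $O(1)$ factors, the incidence count $I(P,\Gamma)$ between the point set $P=A\times A$ (of size $m^2$) and the family of planar curves $\Gamma=\{\,\mathbf{V}(R_{b,b'})\,\}_{(b,b')\in B^2}$ (of size $n^2$). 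The hypothesis that all first partial derivatives of $F$ are not identically zero guarantees that the projections from $\mathbf{V}(F)$ are dominant, so these resultants are genuinely one-dimensional of degree $O(1)$, keeping the reformulation honest.

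First I would verify that $\Gamma$ is a well-behaved family in the sense required by the Pach--Sharir incidence bound: each curve has degree $O(1)$, any two distinct curves meet in $O(1)$ points (Bezout, Theorem \ref{th:BezoutTheorem}, once one rules out a common component), and at most $O(1)$ curves pass through any fixed pair of points. Granting these ``two degrees of freedom'' properties, the incidence bound for curves gives
\[
I(P,\Gamma)=O\!\left(|P|^{2/3}|\Gamma|^{2/3}+|P|+|\Gamma|\right)=O\!\left(m^{4/3}n^{4/3}+m^2+n^2\right),
\]
which is exactly case (i). Hence the whole content of the theorem is to understand when these degrees-of-freedom hypotheses \emph{fail}, and to show that each failure forces the additive structure of case (ii).

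The heart of the proof --- and the main obstacle --- is this structural converse. The incidence bound can only fail if $\Gamma$ is degenerate: a one-parameter subfamily of the curves $\mathbf{V}(R_{b,b'})$ must either share common one-dimensional components or pass through a common pair of points. Pulling this back to $\mathbf{V}(F)$, the degeneracy says that the level curves $\{F(\cdot,\cdot,c)=0\}_c$, viewed as a one-parameter family of correspondences between the $x$- and $y$-axes, admit an unexpected one-parameter symmetry. Following the Elekes--R\'onyai/Elekes--Szab\'o mechanism, I would parameterize $\mathbf{V}(F)$ locally away from its singular and critical points and show that this symmetry is equivalent to the vanishing of a second-order differential expression built from $\ln(F_x/F_y)$, directly analogous to the quantity in Lemma \ref{le:DerivativeTest}; integrating that identity produces local real-analytic $\varphi_1,\varphi_2,\varphi_3$ with analytic inverses satisfying $\varphi_1(x)+\varphi_2(y)+\varphi_3(z)=0$ on $\mathbf{V}(F)$.

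Finally I would collect the exceptional locus. The points at which the local parameterization breaks down --- singular points of $\mathbf{V}(F)$, zeros of the relevant partial derivatives, and ramification points of the projections --- are cut out by $F$ together with finitely many Jacobian minors, hence lie on a single one-dimensional subvariety $Z^*\subset\mathbf{V}(F)$ of degree $O(1)$. Outside $Z^*$ the additive representation holds on a neighborhood of every point, yielding case (ii) as stated. The genuinely hard step is the structural converse of the previous paragraph: showing that the purely combinatorial degeneracy is captured by a single differential identity, that this identity integrates (not merely formally) to honest analytic functions with analytic inverses, and that the monodromy can be controlled so that the three one-variable functions are globally well defined on their intervals.
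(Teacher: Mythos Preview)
The paper does not prove this theorem: it is quoted verbatim as a result of Raz, Sharir, and de Zeeuw \cite[Sections 2.1 and 2.3]{RSdZ16} (see also Raz \cite[Lemma 2.4]{Raz20}) and used as a black box in the proof of Lemma~\ref{le:SpecialFormOrManyDist}. So there is no ``paper's own proof'' to compare against.

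That said, your sketch is a faithful outline of the strategy in \cite{RSdZ16}. The reformulation as incidences between $A\times A$ and a family of resultant curves indexed by $B\times B$, the appeal to a Pach--Sharir-type bound for curves with two degrees of freedom, and the identification of the structural alternative (ii) as the failure of that bound are all correct in spirit. Two points deserve more care if you intend to turn this into a proof. First, the ``two degrees of freedom'' verification is not automatic: two curves $\mathbf{V}(R_{b_1,b_1'})$ and $\mathbf{V}(R_{b_2,b_2'})$ can share a component without $F$ itself having the additive form, and in \cite{RSdZ16} substantial work goes into showing that such sharing, when it happens for too many pairs, already forces the local decomposition of case (ii). Second, the passage from the differential identity to actual real-analytic $\varphi_i$ with analytic inverses is more delicate than integrating $\partial_s\partial_t\ln|F_x/F_y|=0$: one must work locally on the smooth part of $\mathbf{V}(F)$, solve a system of ODEs, and verify that the resulting functions are genuinely analytic with nonvanishing derivative on the claimed intervals. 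Your final paragraph acknowledges this, but ``integrating that identity'' understates the analytic bookkeeping required.
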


\begin{proof}[Proof of Lemma \ref{le:SpecialFormOrManyDist}.]
Assume that the distance function of $\gamma_1$ and $\gamma_2$ is not of a special form.
We rotate $\R^3$ so that $C_1$ and $C_2$ are not contained in planes that are parallel to the $yz$-plane.
A rotation does not change distances in $\R^3$.
This in turn implies that a rotation does not change whether a distance function has a special form or not.

Since every variety in $\R^d$ is the zero set of a single polynomial, there exist $f_1,f_2\in \R[x,y,z]$ such that $C_1=\mathbf{V}(f_1)$ and $C_2=\mathbf{V}(f_2)$. We consider points $p=(p_x,p_y,p_z)\in C_1$ and $q=(q_x,q_y,q_z)\in C_2$ at a distance of $\delta\in \R$ from each other. Setting $\Delta=\delta^2$ leads to
\begin{align*}
f_1(p_x,p_y,p_z) &= 0, \\
f_2(q_x,q_y,q_z) &=0, \\
(p_x-q_x)^2+(p_y-q_y)^2&+(p_z-q_z)^2 = \Delta.
\end{align*}

When considering $p_x,p_y,p_z,q_x,q_y,q_z,\Delta$ as the coordinates of a seven-dimensional space $\R^7$, the above system defines a variety $U\subset\R^7$. 
Since $f_1$ defines a curve, the first equation of the system defines a one-dimensional set of values for $p_x,p_y,p_z$.
Similarly, the second equation of the system defines a one-dimensional set of values for $q_x,q_y,q_z$. 
When the values of $p_x,p_y,p_z,q_x,q_y,q_z$ are fixed, the third equation uniquely defines $\Delta$. 
This implies that $\dim(U)=2$.

Let $\pi:\R^7 \to \R^3$ be the projection defined as
\[
\pi(p_x,p_y,p_z,q_x,q_y,q_z,\Delta) = (p_x,q_x,\Delta). 
\]
We claim that every point of $U_3$ has $O(1)$ points of $U$ projected to it. 
Indeed, recall that $C_1$ is not contained in planes parallel to the $yz$-plane. 
Theorem \ref{th:MilnorThom} implies that $C_1$ intersects such a plane in $O(1)$ points. 
That is, the number of points on $C_1$ that have the same $x$-coordinate is $O(1)$.
A symmetric argument holds for $C_2$.
Thus, every triple $(p_x,q_x,\Delta)\in U_3$ corresponds to $O(1)$ points of $U$.
Since $\dim U =2$, Theorem \ref{th:TarskiSeidenberg} implies that $U_3$ is a semi-algebraic set of dimension two. 
Let $F\in \R[p_x,q_x,\Delta]$ satisfy $\mathbf{V}(F) = \overline{U_3}$.

\parag{Studying $F$.} We now prove that $F$ involves all three variables $p_x,q_x,\Delta$. 
Recall that a fixed $p_x$ corresponds to $O(1)$ points $p\in C_1$ and a fixed $q_x$ corresponds to $O(1)$ points $q\in C_2$.
This implies that, for fixed $p_x,q_x$, there are $O(1)$ values for $\Delta$ that satisfy $(p_x,q_x,\Delta)\in U_3$. 
In other words, every line that is parallel to the $\Delta$-axis intersects $U_3$ in $O(1)$ points. 
Thus, $\Delta$ appears in the definition of $F$. 

If $C_1$ is a circle and $C_2$ is the axis of $C_1$ then $\rho(s,t)$ does not depend on $s$. This implies that $\rho(s,t)$ is of a special form. The same happens when switching the roles of $C_1$ and $C_2$. 
Since the distance function of $\gamma_1$ and $\gamma_2$ is not of a special form, $C_1$ and $C_2$ are not a circle and its axis. 

Consider fixed values for $p_x$ and $\Delta$, such that infinitely many values of $q_x$ satisfy $(p_x,q_x,\Delta)\in U_3$.
Since $p_x$ corresponds to $O(1)$ points $p\in C_1$, there exists such a point $p$ that is at distance $\sqrt{\Delta}$ from infinitely many points $q\in C_2$.
Let $S$ be the sphere of radius $\sqrt{\Delta}$ centered at $p$. 
The above implies that $C_2$ has an infinite intersection with $S$.
Corollary \ref{co:FiniteOrContained} implies that $C_2\subset S$. 
Assume that there exist $(p_x', \Delta') \neq (p_x,\Delta)$ such that infinitely many values of $q_x$ satisfy $(p_x',q_x,\Delta')\in U_3$.
Then there exists $p'$ with $x$-coordinate $p_x'$ such that the sphere $S'$ of radius $\sqrt{\Delta'}$ centered at $p'$ contains $C_2$. 
We have that $p_x\neq p_x'$, since otherwise $S\cap S' = \emptyset$. 
Thus, $C_2$ is the circle $S\cap S'$. 

Continuing the preceding paragraph, we next assume that there exist infinitely many pairs $(p_x,\Delta)$, each having infinitely values of $q_x$ that satisfy $(p_x,q_x,\Delta)\in U_3$.
As before, that means that $C_2$ is a circle and that every pair has $(p_x,\Delta)$ has a distinct $p_x$.
For a corresponding point $p\in C_1$ to be equidistant from infinitely many points of $C_2$, the point $p$ must be on the axis of $C_2$. 
That is, infinitely many points of $C_1$ are on the axis of $C_2$. 
Corollary \ref{co:FiniteOrContained} implies that $C_1$ is the axis of $C_2$. 
This is contradicts the above statement that $C_1$ and $C_2$ are not a circle and its axis. 
We conclude that the number of pairs $(p_x,\Delta)$ that satisfy the above is finite.
In other words, $U_3$ contains a $O(1)$ lines that are parallel to the $q_x$-axis. Thus, $q_x$ appears in the definition of $F$. 
A symmetric arguments holds for $p_x$.

Since all three coordinates participate in the definition of $F$, no first partial derivative of $F$ is identically zero. We may thus apply Theorem \ref{th:CartesianProduct} with $F$. 
We partition the remainder of the proof according to the case of the theorem that holds.

\parag{\textbf{Case (i).}} We first assume that case (i) of Theorem \ref{th:CartesianProduct} holds.
Let $\P_1\subset A_1$ and $\P_2\subset A_2$ satisfy $|\pts_1| = m$ and $|\pts_2|=n$. 
Let $\P_{1,x}$ and $\P_{2,x}$ be sets of the $x$-coordinates of the points of $\P_1$ and $\P_2$, respectively.
We set 
\begin{align*}
Q &= \left\{(a,a',b,b')\in \pts_1^2 \times \pts_2^2 \mid \exists \Delta\in \R : F(a_x,b_x,\Delta)=F(a_x',b_x',\Delta)=0 \right\},\\[2mm]
Q_x&=\left\{(a_x,a_x',b_x,b_x')\in \P_{1,x}^2 \times \P_{2,x}^2 \mid \exists \Delta\in \R :  F(a_x,b_x,\Delta)=F(a_x',b_x',\Delta)=0 \right\}.
\end{align*}
Case (i) of Theorem \ref{th:CartesianProduct} implies that
\[
|Q_x|=O\left(m^{4/3}n^{4/3}+m^2+n^2\right).
\]

Since every $x$-coordinate appears $O(1)$ times in $C_1$ and $C_2$, a quadruple $(a_x,a_x',b_x,b_x')\in Q_x$ corresponds to $O(1)$ quadruples of $Q$. 
This in turn implies that
\begin{equation} \label{eq:UpperQuadruples}
|Q|= O(|Q_x|) = O\left(m^{4/3}n^{4/3}+m^2+n^2\right).
\end{equation}

For $\delta\in \R$, let $m_{\delta}$ be the number of pairs in $\P_1\times \P_2$ that span the distance $\delta$.
The number of quadruples $(a,a',b,b')\in Q$ that satisfy $|ab|=|a'b'|=\delta$ is $m_{\delta}^2$. Thus $|Q|=\sum_{\delta} m_\delta^2.$ 
We also note that every pair of $\P_1\times\P_2$ contributes to exactly one $m_\delta$.
This implies that $\sum_\delta m_\delta = mn$.
Combining these observations with the Cauchy--Schwarz inequality leads to 
\[
|Q| =\sum_{\delta\in \R}m_\delta^2 
\ge \frac{(\sum_\delta m_\delta)^2}{D(\pts_1,\pts_2)} = \frac{m^2n^2}{D(\pts_1,\pts_2)}.
\]
Combining this with \eqref{eq:UpperQuadruples} gives
\[ 
\frac{m^2n^2}{D(\pts_1,\pts_2)} = O\left(m^{4/3}n^{4/3}+m^2+n^2\right), \quad \text{ or } \quad D(\pts_1,\pts_2)=\Omega\left(\min\left\{m^{2/3}n^{2/3},m^2,n^2\right\}\right).
\]
In other words, $A_1$ and $A_2$ span many distances.

\parag{\textbf{Case (ii).}} We now assume that case (ii) of Theorem \ref{th:CartesianProduct} holds.
We set
\[ 
Z = \left\{(\gamma_{1,x}(s),\gamma_{2,x}(t),\rho(s,t))\ : \ s,t\in (0,1) \right\}.
\]
By definition, $Z$ is contained in $U_3$.
It is a continuous two-dimensional semi-algebraic set. 
Intuitively, we can imagine $Z$ a patch of the surface $U_3$.

Let $Z^*$ be as described in case (ii) of Theorem \ref{th:CartesianProduct}. We consider $v\in Z\setminus Z^*$. Such a $v$ exists, since $Z$ is two-dimensional while $Z^*$ is one-dimensional. 
By Theorem \ref{th:CartesianProduct}, there exist open intervals $I_1,I_2,I_3\subset \R$ and real-analytic $\phi_1:I_1\to \R,$ $\phi_2:I_2\to \R, \phi_3: I_3 \to \R$ with analytic inverses such that $v\in I_1\times I_2\times I_3$ and every $(p_x,q_x,\Delta)\in I_1\times I_2\times I_3$ satisfies
\begin{equation*} 
(p_x, q_x, \Delta) \in \Vb(F) \quad \text{ if and only if } \quad \phi_1(p_x) + \phi_2(q_x) + \phi_3(\Delta) = 0.
\end{equation*}
Set $\phi_1'(p_x) = -\phi_1(p_x)$ and $\phi_2'(q_x) = -\phi_2(q_x)$.
Then, we can rearrange 
$\phi_1(p_x) + \phi_2(q_x) + \phi_3(\Delta) = 0$ as 
$\Delta=\phi_3^{-1}(\phi_1'(p_x)+\phi_2'(q_x))$.
Combining this with the above leads to 
\begin{equation}\label{eq:FirstIff}
(p_x, q_x, \Delta) \in \Vb(F) \quad \text{ if and only if } \quad  \Delta=\phi_3^{-1}(\phi_1'(p_x)+\phi_2'(q_x)).
\end{equation}

We write $v=(p_{x,0},q_{x,0},\Delta_0)$. 
Since $v\in Z$, there exist $s_0,t_0\in (0,1)$ that satisfy
\[
(\gamma_{1,x}(s_0),\gamma_{2,x}(t_0),\rho(s_0,t_0))=(p_{x,0},q_{x,0},\Delta_0).
\]
There exist open intervals $S\subset (0,1)$ and $T\subset (0,1)$ such that $s_0\in S$, $t_0\in T$,  $\gamma_{1,x}(S)\subset I_1$, $\gamma_{2,x}(T)\subset I_2$, and $\rho(S,T)\subset I_3.$ 
Consider $s\in S$ and $t\in T$ such that $\rho(s,t)=\Delta$. 
Then $(\gamma_{1,x}(s),\gamma_{2,x}(t),\Delta)\in \mathbf{V}(F)$.
Combining this with \eqref{eq:FirstIff} implies that 
\begin{equation*}
\rho(s,t)=\phi_3^{-1}\big(\phi_1'(\gamma_{1,x}(s)) + \phi_2'(\gamma_{2,x}(t))\big). 
\end{equation*}
Since this holds for every $s\in S$ and $t\in T$, 
we conclude that $\rho(s,t)$ is of a special form. 
This contradiction implies that Case (ii) cannot occur.
\end{proof}

By combining Lemma \ref{le:SpecialFormFewDistances} and Lemma \ref{le:SpecialFormOrManyDist}, we obtain Theorem  \ref{th:FewOrMany}.
We first recall the statement of this theorem.
\vspace{2mm}

\noindent {\bf Theorem \ref{th:FewOrMany}.}
\emph{Every pair of curves in $\R^3$ spans either few or many distances.}
\begin{proof}
Consider two curves $C_1,C_2\subset \R^3$. By Lemma \ref{le:SmoothParametrization}, each curve is the union of $O(1)$ arcs and points. We first assume that there exist two arcs, one from each curve, with parameterizations whose distance function is of a special form. 
In this case, Lemma \ref{le:SpecialFormFewDistances} states that the two arcs span few distances. This in turn implies that $(C_1,C_2)$ spans few distances.

We next assume that no two arcs of $C_1$ and $C_2$ have a distance function of a special form. Let $\pts_1 \subset C_1$ be a set of $m$ points and let $\pts_2\subset C_2$ be a set of $n$ points. 
Since $C_1$ is the union of $O(1)$ arcs and points, there exists an arc of $A_1\subset C_1$ that contains $\Theta(m)$ points of $\pts_1$. Similarly, there exists an arc of $A_2\subset C_2$ that contains $\Theta(n)$ points of $\pts_2$. 
Lemma \ref{le:SpecialFormOrExpands} states that %
\[ D(\pts_1,\pts_2)\ge D(\pts_1\cap A_1,\pts_2\cap A_2)=\Omega\left(\min\left\{m^{2/3}n^{2/3},m^2,n^2\right\}\right). \]
\end{proof}

\section{Configurations that span few distances}\label{sec:Configurations}

In this section, we study configurations that span few distances. To show that a configuration spans few distances, we show that the distance function is of a special form and then apply Lemma \ref{le:SpecialFormFewDistances}.
Recall that, in this work, we only consider right circular cylinders.

\begin{lemma}\label{le:LineCylinderFewDistances}
Let $\ell \subset \R^3$ be a line in $\R^3$ and let $C$
be a curve contained in a cylinder centered around $\ell$. Then $(\ell,C)$ spans few distances.
\end{lemma}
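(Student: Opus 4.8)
The plan is to set up explicit coordinates and compute the distance function directly, then show it is of a special form, so that Lemma~\ref{le:SpecialFormFewDistances} applies. After a rotation and translation (which preserve distances and hence preserve whether a distance function is of a special form), we may assume $\ell$ is the $z$-axis and the cylinder $S$ is $\mathbf{V}(x^2+y^2-r^2)$ for some $r>0$. The curve $C$ lies on $S$, so by Lemma~\ref{le:SmoothParametrization} any arc $A\subset C$ has a parameterization of the form $\gamma_1(s)=(r\cos\theta(s),\,r\sin\theta(s),\,h(s))$ for smooth functions $\theta,h$. The line $\ell$ is parameterized by $\gamma_2(t)=(0,0,t)$.

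First I would plug these into the definition~\eqref{eq:DistanceFunc} of the distance function. Since the first two coordinates of $\gamma_2$ vanish, the cross terms drop out and we get
\[
\rho(s,t)=r^2\cos^2\theta(s)+r^2\sin^2\theta(s)+\big(h(s)-t\big)^2=r^2+\big(h(s)-t\big)^2.
\]
Next I would exhibit this in the form~\eqref{eq:SpecialForm}. Set $\phi_1(s)=h(s)$, $\phi_2(t)=-t$, and $\phi_3(u)=r^2+u^2$. These are smooth, and $\rho(s,t)=\phi_3(\phi_1(s)+\phi_2(t))$, so $\rho$ is of a special form (one must just note that $\phi_1(I_1)+\phi_2(I_2)$ lands in some open interval $I_3$ on which $\phi_3$ is defined, which is automatic since all three functions are defined on all of $\R$). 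Then Lemma~\ref{le:SpecialFormFewDistances}, applied to any arc $A_1\subset C$ and the arc $A_2=\ell$ (taking, say, the parameterization $t\mapsto(0,0,t)$ restricted to a bounded interval, or simply noting $\ell$ is itself an arc), shows $(A_1,A_2)$ spans few distances. Since $C$ contains an arc and $\ell$ is an arc, this gives $D(\pts_1,\pts_2)=O(m+n)$ by choosing the points on such arcs; strictly, since $C$ is a union of finitely many arcs and points, we place all $m$ points of $\pts_1$ on a single arc $A_1$ and invoke the lemma there, which still yields $O(m+n)$ for $C$ and $\ell$.

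There is essentially no hard part here: the geometry collapses because every point of the cylinder is equidistant from $\ell$ in the two coordinates orthogonal to $\ell$, leaving only the height difference, which separates additively. The only points requiring a word of care are the minor bookkeeping about arcs versus curves (handled by Lemma~\ref{le:SmoothParametrization}, exactly as in the proof of Theorem~\ref{th:FewOrMany}) and the trivial check that the relevant images lie in suitable open intervals so that the formal definition of ``special form'' is met. I expect the whole argument to be just a few lines.
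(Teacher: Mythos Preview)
Your proposal is correct and follows essentially the same approach as the paper: reduce to $\ell$ being the $z$-axis, compute $\rho(s,t)=r^2+(h(s)-t)^2$, observe this is of a special form, and invoke Lemma~\ref{le:SpecialFormFewDistances}. The only cosmetic difference is that the paper scales to $r=1$ and reparameterizes the arc of $C$ by its $z$-coordinate (forcing a separate one-line treatment of the case where $C$ is a circle in a plane orthogonal to $\ell$), whereas your general parameterization $(r\cos\theta(s),r\sin\theta(s),h(s))$ absorbs that case automatically.
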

\begin{proof}
We translate and rotate the space so that $\ell$ becomes the $z$-axis. We then perform a uniform scaling of $\R^3$ so that $C$ is contained in a cylinder of radius one around $\ell$. These transformation do not change the number of distances between point sets. 
We first assume that all points of $C$ have the same $z$-coordinate. 
In this case, $C$ it must be a circle contained in a plane orthogonal to $\ell$. A fixed point of $\ell$ spans the same distance with every point of $C$. 
Thus, the number of distinct distances between $m$ points on $\ell$ and $n$ points on $C$ is at most $m$.
 
Next, we assume that $C$ is not a circle in a plane orthogonal to $\ell$. 
In this case, the set of $z$-coordinates of points of $C$ is an infinite closed interval $[z_1,z_2] \subset \R$. 
As a parameterization of $\ell$, we set $\gamma_1(s)=(0,0,s)$. 
For $t\in (z_1,z_2)$, the distance function between the point $(0,0,s)$ and a point of $C$ with $z$-coordinate $t$ is
\[
\rho(s,t)=1+(s-t)^2.
\]
Let $A$ be an arc of $C$ that is parameterized by its $z$-coordinate. 
Since $\rho(s,t)$ is of a special form, Lemma \ref{le:SpecialFormFewDistances} implies that $A$ and $\ell$ span few distances.
\end{proof}

In Lemma \ref{le:LineCylinderFewDistances}, it is not difficult to replace $C$ with any set that contains an arc. 
That is, Lemma \ref{le:LineCylinderFewDistances} can be extended to non-algebraic sets.

\begin{lemma}
\label{le:LineOrthogonalPlaneFewDistances}
Let $L\subset \R^3$ be a line and let $C$ be a curve contained in a plane orthogonal to $L$. Then $(L,C)$ spans few distances.
\end{lemma}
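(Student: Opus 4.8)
The plan is to reduce to the two-dimensional situation by projecting orthogonally onto the plane containing $C$. First I would translate and rotate $\R^3$ so that the plane orthogonal to $L$ which contains $C$ becomes $\mathbf{V}(z)$, and hence $L$ becomes a line parallel to the $z$-axis; since $L$ is orthogonal to this plane, $L$ meets $\mathbf{V}(z)$ in a single point, which after a further translation we may take to be the origin, so $L$ is the $z$-axis. These are rigid motions, so they do not change any distances. Now every point of $C$ has the form $(x,y,0)$ and every point of $L$ has the form $(0,0,s)$.

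Next I would compute the distance function. For a parameterization $\gamma_1(s)=(0,0,s)$ of $L$ and any point $q=(q_x,q_y,0)$ of $C$, the square of the distance between them is $q_x^2+q_y^2+s^2$. The key observation is that this depends on $q$ only through the single quantity $q_x^2+q_y^2$, i.e. the square of the distance from $q$ to the origin within the plane. So if $A\subset C$ is an arc with parameterization $\gamma_2$, and we set $\phi_2(t)=\gamma_{2,x}(t)^2+\gamma_{2,y}(t)^2$, $\phi_1(s)=s^2$, and $\phi_3(u)=u$, then the distance function is $\rho(s,t)=\phi_3(\phi_1(s)+\phi_2(t))$, which is of a special form (the three functions are smooth, being polynomial compositions of smooth functions, and one checks $\phi_1(s)+\phi_2(t)$ lands in the codomain interval of $\phi_3$, which we may take to be all of $\R$ restricted to a suitable open interval). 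By Lemma \ref{le:SmoothParametrization}, $C$ is a union of finitely many arcs and points, so pick any one arc $A$; Lemma \ref{le:SpecialFormFewDistances} then gives that $(L,A)$ spans few distances, and since $A\subset C$ this shows $(L,C)$ spans few distances.

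I do not anticipate a serious obstacle here; this lemma is genuinely easy, paralleling the "circle in a plane orthogonal to the axis" sub-case already handled inside the proof of Lemma \ref{le:LineCylinderFewDistances}. The only point requiring a small amount of care is the bookkeeping in the definition of "special form": one must exhibit honest open intervals $I_1,I_2,I_3$ and verify the containment $\phi_1(s)+\phi_2(t)\in I_3$, which is immediate by shrinking $I_1,I_2$ to small intervals around chosen interior parameter values and taking $I_3$ to be an open interval around the corresponding value of $\phi_1+\phi_2$. As in the remark following Lemma \ref{le:LineCylinderFewDistances}, nothing uses that $C$ is algebraic beyond extracting one arc, so the statement extends verbatim to any set $C$ contained in a plane orthogonal to $L$ that contains an arc.
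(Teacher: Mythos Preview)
Your proposal is correct and follows essentially the same approach as the paper: normalize so that $L$ is the $z$-axis and $C$ lies in the $xy$-plane, observe that the squared distance is $s^2+(\text{radial part of }q)$, recognize this as a special form, and invoke Lemma~\ref{le:SpecialFormFewDistances}. The only cosmetic difference is that the paper parameterizes the arc of $C$ by its radial coordinate $t=\sqrt{x^2+y^2}$ (which forces a small case split when $C$ is a circle centered on $L$), whereas you take an arbitrary smooth parameterization $\gamma_2$ and set $\phi_2(t)=\gamma_{2,x}(t)^2+\gamma_{2,y}(t)^2$, which handles all cases uniformly.
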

\begin{proof}
We rotate and translate the space so that $\ell$ becomes the $z$-axis and $C$ is contained in the $xy$-plane. 
These transformations do not change distances between points. 
We define the \emph{radial coordinate} of a point $(x,y,z)\in\R^3$ as $\sqrt{x^2+y^2}$.
If all the points of $C$ have the same radial coordinate then $C$ is a circle with axis $\ell$.  This case is covered by Lemma \ref{le:LineCylinderFewDistances}. 

Next, we assume that the points of $C$ have more than one radial coordinate. 
In this case, the set of radial coordinates is a closed infinite interval $[r_1,r_2]$.
As a parameterization of $\ell$, we set $\gamma_1(s)=(0,0,s)$. 
For $t\in (r_1,r_2)$, the distance function between the point $(0,0,s)$ and a point of $C$ with radial coordinate $t$ is
\[ \rho(s,t)=s^2+t^2. \]
Let $A$ be an arc of $C$ that is parameterized by its radial coordinate. 
Since $\rho(s,t)$ is of a special form, Lemma \ref{le:SpecialFormFewDistances} implies that $A$ and $\ell$ span few distances.
\end{proof}

As with Lemma \ref{le:LineCylinderFewDistances}, it is not difficult to extend Lemma \ref{le:LineOrthogonalPlaneFewDistances} to non-algebraic sets.

\begin{lemma}\label{le:AlignedCirclesFewDistances}
Let $C_1$ and $C_2$ be aligned circles. Then $(C_1,C_2)$ spans few distances.
\end{lemma}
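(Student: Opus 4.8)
The plan is to follow the same template used for Lemmas~\ref{le:LineCylinderFewDistances} and~\ref{le:LineOrthogonalPlaneFewDistances}: normalize the configuration with distance-preserving maps, write down an explicit parameterization of each circle, compute the distance function $\rho(s,t)$, exhibit it in the special form $\phi_3(\phi_1(s)+\phi_2(t))$, and then invoke Lemma~\ref{le:SpecialFormFewDistances}. First I would rotate and translate $\R^3$ so that the common axis of $C_1$ and $C_2$ becomes the $z$-axis; a uniform scaling is not needed here, but after the rigid motion we may write $C_1=\{(r_1\cos\theta, r_1\sin\theta, h_1)\}$ and $C_2=\{(r_2\cos\psi, r_2\sin\psi, h_2)\}$ for some radii $r_1,r_2>0$ and heights $h_1,h_2\in\R$. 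None of these transformations affects distances or the property of having a special form.

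Next I would compute the squared distance between a point of $C_1$ parameterized by $s$ (say $\theta=s$) and a point of $C_2$ parameterized by $t$ (say $\psi=t$):
\begin{align*}
\rho(s,t) &= (r_1\cos s - r_2\cos t)^2 + (r_1\sin s - r_2\sin t)^2 + (h_1-h_2)^2 \\
&= r_1^2 + r_2^2 + (h_1-h_2)^2 - 2r_1 r_2(\cos s\cos t + \sin s\sin t) \\
&= r_1^2 + r_2^2 + (h_1-h_2)^2 - 2r_1 r_2\cos(s-t).
\end{align*}
So with $\phi_1(s)=s$, $\phi_2(t)=-t$, and $\phi_3(u) = r_1^2+r_2^2+(h_1-h_2)^2 - 2r_1 r_2\cos u$, we have $\rho(s,t)=\phi_3(\phi_1(s)+\phi_2(t))$ on suitable open intervals. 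These $\phi_i$ are smooth, so $\rho$ is of a special form. I would pick an arc $A_1\subset C_1$ and an arc $A_2\subset C_2$ on which these angular parameterizations are valid smooth bijections onto their images (each circle minus a point is an arc), note that the restriction of $\rho$ to the corresponding parameter intervals is still of a special form, and then Lemma~\ref{le:SpecialFormFewDistances} gives that $(A_1,A_2)$ spans few distances, hence so does $(C_1,C_2)$.

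There is essentially no hard step here; the only things to be slightly careful about are (i) making sure the angular parameterization is a legitimate arc parameterization on an open subinterval — one simply deletes a single point of each circle, which is harmless since the few-distances property only requires producing point sets of every size $m,n$ inside the arcs — and (ii) checking that the collapse case is handled, but here $\phi_1(I_1)$ and $\phi_2(I_2)$ are always nondegenerate intervals, so we land in the generic case of Lemma~\ref{le:SpecialFormFewDistances}. If one prefers to avoid arcs altogether, one can directly mimic the classical concentric-circles construction: choose equally spaced angles $s_i = i\eps$ on $C_1$ and $t_j = j\eps$ on $C_2$, so that $\cos(s_i - t_j)=\cos((i-j)\eps)$ takes at most $m+n-1$ values, giving $O(m+n)$ distinct distances. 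Either route completes the proof.
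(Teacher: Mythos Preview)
Your proof is correct and essentially identical to the paper's: both normalize so the common axis is the $z$-axis, parameterize the circles by angle, reduce $\rho(s,t)$ to a function of $s-t$ via the cosine addition formula, and invoke Lemma~\ref{le:SpecialFormFewDistances}. The only cosmetic differences are that the paper additionally scales so the heights become $0$ and $1$ and uses $2\pi s,2\pi t$ to keep the domain $(0,1)$, while you keep general heights $h_1,h_2$ and raw angles.
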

\begin{proof}
We translate, rotate, and scale $\R^3$ so that the axis of both circles is the $z$-axis, $C_1$ is contained in the $xy$-plane, and $C_2$ is contained in the plane $\mathbf{V}(z-1)$.
Such transformation do not change the number of distances between two point sets. Then, $C_1$ can be parameterized as $\gamma_1(s)=(r_1\cos 2\pi s, r_1 \sin 2\pi s, 0)$ and $C_2$ as $\gamma_2(t)=(r_2 \cos 2\pi t, r_2 \sin 2\pi t, 1)$, where $s,t\in (0,1)$. The distance function of these parametrizations is
\begin{align*}
   \rho(s,t)
    &=(r_1 \cos 2\pi s-r_2 \cos 2\pi t)^2+(r_1 \sin 2\pi s - r_2 \sin 2\pi  t)^2+1\\
    &=r_1^2+r_2^2-2r_1r_2(\cos 2\pi s \cdot \cos 2\pi t + \sin 2\pi s\cdot \sin 2\pi t)+1\\
    &=(1+r_1^2+r_2^2)-2 r_1 r_2 \cos 2\pi (s-t).
\end{align*}
Since $\rho(s,t)$ is of a special form, Lemma \ref{le:SpecialFormFewDistances} implies that $C_1$ and $C_2$ span few distances.
\end{proof}

The case of perpendicular circles is subsumed by the case of log-circles below.
For explicit constructions of point sets on aligned and perpendicular circles, see Mathialagan and Sheffer \cite{MS20}.

\begin{lemma}\label{le:PerpendicularParabolasFewDistances}
Let $C_1$ and $C_2$ be CPO parabolas. Then $(C_1,C_2)$ spans few distances.
\end{lemma}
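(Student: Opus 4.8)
The plan is to follow the same template used throughout this section: write down an explicit parameterization of the two CPO parabolas, compute their distance function $\rho(s,t)$, show it is of a special form in the sense of \eqref{eq:SpecialForm}, and then invoke Lemma \ref{le:SpecialFormFewDistances}. First I would normalize the configuration. Since translations, rotations, and uniform scalings do not change the number of distinct distances (and scaling does not affect whether a function is of a special form), I would place the common axis of $C_1$ and $C_2$ along the $z$-axis. Because the two parabolas are congruent, share this axis, lie in perpendicular planes, and have vertex-to-focus vectors pointing in opposite directions, I can arrange that $C_1$ lies in the $xz$-plane opening in the $+z$ direction and $C_2$ lies in the $yz$-plane opening in the $-z$ direction, with the two vertices placed symmetrically about the origin along the $z$-axis at heights $+h$ and $-h$ for some $h$. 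Concretely I would take, after scaling, the parameterizations
\[
\gamma_1(s) = (s,\, 0,\, s^2 + h) \qquad\text{and}\qquad \gamma_2(t) = (0,\, t,\, -t^2 - h),
\]
where $s,t$ range over small open intervals (restricting to an arc of each parabola, as permitted since curves are finite unions of arcs and points).

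Next I would compute the distance function. We get
\[
\rho(s,t) = s^2 + t^2 + \big((s^2+h) - (-t^2-h)\big)^2 = s^2 + t^2 + (s^2 + t^2 + 2h)^2.
\]
The crucial observation is that every occurrence of $s$ and $t$ is through the combination $s^2 + t^2$. So setting $\phi_1(s) = s^2$, $\phi_2(t) = t^2$, and $\phi_3(u) = u + (u+2h)^2$, we have $\rho(s,t) = \phi_3(\phi_1(s) + \phi_2(t))$, which is exactly \eqref{eq:SpecialForm} with $\phi_1,\phi_2,\phi_3$ smooth and the interval conditions easily arranged on the chosen arcs. Then Lemma \ref{le:SpecialFormFewDistances} immediately gives that the two arcs span few distances, hence so do $C_1$ and $C_2$.

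The only real work is verifying that the normalization actually produces this particular coordinate form — i.e., that the four defining conditions of ``CPO parabolas'' force the configuration, up to rigid motion and scaling, into the shape above. I expect this bookkeeping to be the main (though routine) obstacle: one must check that ``same axis'' pins both vertices onto the $z$-axis, that congruence makes the two quadratic coefficients equal (so scaling normalizes both to $1$), that ``perpendicular planes containing the parabolas'' with a shared axis forces the planes to be two coordinate planes through that axis, and that the ``opposite vertex-to-focus vectors'' condition is what makes one parabola open upward and the other downward — which is precisely what creates the $(s^2+t^2+2h)^2$ term rather than an $(s^2-t^2)^2$ term. It is worth remarking that the opposite-orientation hypothesis is essential: if the parabolas opened the same way, the $z$-difference would be $s^2 - t^2$, the square would be $(s^2-t^2)^2$, and $\rho$ would no longer be expressible through $s^2+t^2$ alone; one would then run the derivative test of Lemma \ref{le:DerivativeTest} to confirm it is \emph{not} of a special form. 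But for the CPO case the special-form structure is transparent, and no computer computation is needed.
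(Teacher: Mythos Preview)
Your proposal is correct and follows essentially the same approach as the paper: normalize the CPO pair by a rigid motion and scaling, parameterize each parabola by its transverse coordinate, observe that the distance function depends only on $s^2+t^2$, and apply Lemma \ref{le:SpecialFormFewDistances}. The paper places the common axis along the $y$-axis with $\gamma_1(s)=(s,s^2,0)$ and $\gamma_2(t)=(0,q-t^2,t)$, obtaining $\rho(s,t)=(s^2+t^2-q)^2+(s^2+t^2)$, which is the same computation as yours up to relabeling coordinates and a translation along the axis.
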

\begin{proof}
We translate, rotate, and perform a uniform scaling of $\R^3$ so that $C_1$ becomes the parabola $\Vb(y-x^2,z)$.
Such transformations do not affect the number of distinct distances between two sets. 
Since these are CPO parabolas, there exists $q\in \R$ such that $C_2 = \Vb(y-q+z^2,x)$.
We parameterize $C_1$ as
$\gamma_1(s) = (s,s^2,0)$ and $C_2$ as $\gamma_2(t) = (0,q-t^2,t)$.
Then, the distance function is 
\begin{align*}
\rho(s,t)=s^2+(s^2+t^2-q)^2+t^2=((s^2+t^2)-q)^2+(s^2+t^2).
\end{align*}
Since $\rho(s,t)$ is of a special form, Lemma \ref{le:SpecialFormFewDistances} implies that $C_1$ and $C_2$ span few distances.
\end{proof}

The expression \emph{matching curves} refers to two ellipses or to an ellipse and a hyperbola, as defined in \eqref{FirstConic} and \eqref{SecondConic}. In particular, matching log-circles are not curves by our definition.

\begin{lemma}\label{le:MatchingEllipseHyperbolaFewDistances}
Let $C_1$ and $C_2$ be matching curves. 
Then $(C_1,C_2)$ spans few distances.
\end{lemma}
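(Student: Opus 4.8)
The plan is to follow the same recipe used in the previous lemmas of this section: reduce to a normal form by translating, rotating, and uniformly scaling $\R^3$ (which preserves the number of distinct distances), write down an explicit parameterization of each curve, compute the distance function $\rho(s,t)$, observe that it is of a special form, and invoke Lemma \ref{le:SpecialFormFewDistances}. The only new ingredient is choosing parameterizations that make the special-form structure visible.

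Concretely, after normalization I would take $C_1$ to be the ellipse $\mathbf{V}(mx^2+y^2-a,\,z)$ from \eqref{FirstConic} and $C_2$ to be the curve $\mathbf{V}\!\big(\tfrac{m}{m-1}x^2+z^2-b,\,y\big)$ from \eqref{SecondConic}, where $C_2$ is an ellipse if $m>1$ and a hyperbola if $0<m<1$. Parameterize $C_1$ by its $x$-coordinate on a suitable arc, $\gamma_1(s)=(s,\pm\sqrt{a-ms^2},0)$, and $C_2$ by its $x$-coordinate, $\gamma_2(t)=(t,0,\pm\sqrt{b-\tfrac{m}{m-1}t^2})$ (with the appropriate branch and domain so that the expressions under the radicals are positive). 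Then
\begin{align*}
\rho(s,t)&=(s-t)^2+\big(a-ms^2\big)+\big(b-\tfrac{m}{m-1}t^2\big)\\
&=s^2-2st+t^2+a+b-ms^2-\tfrac{m}{m-1}t^2\\
&=(1-m)s^2-\tfrac{1}{m-1}t^2-2st+(a+b).
\end{align*}
The key algebraic point is that $(1-m)s^2-\tfrac{1}{m-1}t^2-2st=-(m-1)\big(s+\tfrac{1}{m-1}t\big)^2$, since expanding $-(m-1)\big(s+\tfrac{1}{m-1}t\big)^2=-(m-1)s^2-2st-\tfrac{1}{m-1}t^2$. Hence $\rho(s,t)=(a+b)-(m-1)\big(s+\tfrac{1}{m-1}t\big)^2$, which is exactly of the special form \eqref{eq:SpecialForm} with $\phi_1(s)=s$, $\phi_2(t)=\tfrac{1}{m-1}t$, and $\phi_3(u)=(a+b)-(m-1)u^2$; this is where the precise coefficient $\tfrac{m}{m-1}$ in \eqref{SecondConic} was engineered to work. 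Since $m\neq 1$ all of these are well defined, and each is smooth on an open interval, so on an arc of each curve the distance function is of a special form.

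To finish, I would note that $C_1$ contains an arc parameterized by its $x$-coordinate (pick a branch of the ellipse avoiding the endpoints), and likewise $C_2$ contains such an arc; on these two arcs the computation above shows $\rho$ is of a special form, so Lemma \ref{le:SpecialFormFewDistances} gives that this pair of arcs spans few distances, and therefore $(C_1,C_2)$ spans few distances. The routine bookkeeping — checking the radicands stay positive on the chosen subintervals, handling the $b<0$, $0<m<1$ hyperbola case versus the $m>1$ ellipse case, and confirming the normalizing transformations are legitimate similarities — is straightforward; the one genuinely substantive step is spotting the perfect-square identity, which is immediate once the coefficient $\tfrac{m}{m-1}$ is in place. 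I do not anticipate a real obstacle here, since the definition of matching curves was reverse-engineered precisely so that $\rho$ collapses to a function of $s+\tfrac{1}{m-1}t$.
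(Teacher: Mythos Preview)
Your proposal is correct and follows essentially the same approach as the paper: parameterize each curve by its $x$-coordinate, expand $\rho(s,t)$, factor the quadratic part as a perfect square, and apply Lemma~\ref{le:SpecialFormFewDistances}. The only cosmetic difference is that the paper splits into the cases $0<m<1$ and $m>1$ so that it can write the perfect square with real square-root coefficients, whereas your choice $\phi_1(s)=s$, $\phi_2(t)=t/(m-1)$, $\phi_3(u)=(a+b)-(m-1)u^2$ handles both signs of $m-1$ at once.
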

\begin{proof}
By \eqref{FirstConic} and \eqref{SecondConic}, we can parameterize $C_1$ with $\gamma_1(s)=(s,\sqrt{a-ms^2},0)$ and $C_2$ with  $\gamma_2(t)=(t,0,\sqrt{b-mt^2/(m-1)})$.
When $0<m<1$, the distance function is
\begin{align*}
    \rho(s,t)
    &=(s-t)^2+(a-ms^2)+(b-mt^2/(m-1))\\
    &=(a+b)+(1-m)s^2+\frac{1}{1-m}t^2-2st\\
    &=(a+b)+\left(\sqrt{1-m}\cdot s-\sqrt{\frac{1}{1-m}}\cdot t\right)^2.
\end{align*}

When $m>1$, the last line of the above calculation becomes
\[ \rho(s,t) = (a+b)-\left(\sqrt{m-1}s+\sqrt{\frac{1}{m-1}}t\right)^2. \]

In either case, $\rho(s,t)$ is of a special form. 
Thus, Lemma \ref{le:SpecialFormFewDistances} implies that $C_1$ and $C_2$ span few distances.
\end{proof}

\parag{Log-circles.} 
Recall that log-circles are defined in \eqref{eq:LogCircle}. 
We define the \emph{positive arc} of a log-circle to be the subset of the log-circle that is defined by
\[ x^2+y^2=D+\ln(x-B). \]
The \emph{negative arc} is the subset defined by
\[ x^2+y^2=\ln(B-x). \]
Below we see that these are indeed arcs, since they have smooth parameterizations.
In Figure \ref{fig:LogCirclesCases}(a), the positive arc is the connected component on the right and the negative arc is the component on the left. 
It is possible for a log-circle to consist of only one arc. For example, the log-circle that is defined by $x^2+y^2=2+5\cdot \log |x+1|$ has an empty negative arc (Figure \ref{fig:LogCirclesCases}(b)).

 \begin{figure}[ht]
    \centering
    \begin{subfigure}[b]{0.27\textwidth}
    \centering
        \includegraphics[width=0.97\textwidth]{FullLogCircle.png}
        \caption{$x^2+y^2=2+5\cdot \log |x+1|$.}
    \end{subfigure}
    \hspace{1cm}
    \begin{subfigure}[b]{0.264\textwidth}
        \centering
        \includegraphics[width=\textwidth]{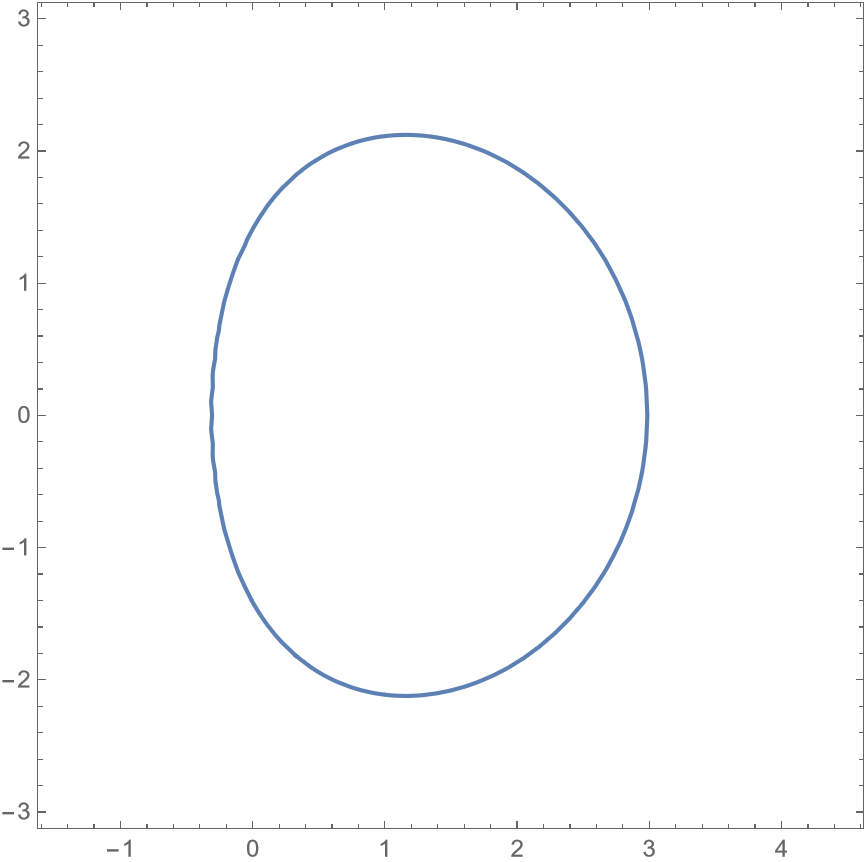}
        \caption{$x^2+y^2=20+\log |x+1|$.}
    \end{subfigure}
    \caption{Different Types of Log Circles.}
    \label{fig:LogCirclesCases}
\end{figure}

We now prove Theorem \ref{th:LogCirclesFewDistances}. We first recall the statement of this result.\vspace{2mm}

\noindent {\bf Theorem \ref{th:LogCirclesFewDistances}.}
\emph{Let $C_1$ and $C_2$ be matching log-circles. 
Then $(C_1,C_2)$ spans few distances.}

\begin{proof}
By definition, we may rotate, translate, and scale $\R^3$ so that $C_1$ and $C_2$ are defined as in \eqref{eq:MatchingLogCirclesDef}. 
Such transformations do not change the number of distances between two point sets. 
We first study the positive arcs of $C_1$ and $C_2$. 
These arcs are defined by
\begin{align*}
(x-B)^2+y^2&=D+A\cdot \ln(x) \quad \text{ and } \quad z=0, \\[2mm]
x^2+z^2&=D'+A\cdot \ln(x-B) \quad \text{ and } \quad y=0.
\end{align*}

We parameterize the two arcs as
\begin{align*}
\gamma_1(s)&=\left(s,\sqrt{D+A\cdot \ln (s)-(s-B)^2},0\right), \\[2mm]
\gamma_2(t)&=\left(t+B,0,\sqrt{D'+A\cdot \ln (t)-(t+B)^2}\right).
\end{align*}
The distance function of these parameterizations is 
\begin{align*}
\rho(s,t)
&=(s-t-B)^2+\left(D+A\cdot\ln (s) -(s-B)^2\right)+\left(D'+A\cdot \ln (t) -(t+B)^2\right)\\
&=\left(s^2+t^2+B^2-2st-2sB+2tB\right)+\left(D+A\cdot \ln (s) -s^2+2sB-B^2\right) \\
&\hspace{80mm} +\left(D'+A\cdot \ln (t) -t^2-2tB-B^2\right)\\[2mm]
&=-B^2-2st+D+D'+A\cdot \ln (s) +A\cdot \ln (t) =-B^2+D+D'-2st+A\cdot \ln (st).
\end{align*}

Since $\rho(s,t)$ is of a special form, Lemma \ref{le:SpecialFormFewDistances} implies that the two arcs span few distances.

The other cases are handled similarly. 
When considering the negative arc of $C_1$, we replace $\log(s)$ with $\log(-s)$.
When considering the negative arc of $C_2$, we replace $\log(t)$ with $\log(-t)$.
In all cases, The distance function $\rho(s,t)$ is of a special form. 
\end{proof}

\section{Few distances between curves on perpendicular planes} \label{sec:PerpendicularPlanes}

In this section, we prove Theorem \ref{th:PerpendicularPlanesFewDistances}.
More precisely, we prove a more general result about arcs in perpendicular planes.

\begin{theorem} \label{th:PerpendicularPlanesSpecialForm}
Let $A_1,A_2\subset \R^3$ be arcs that are contained in perpendicular planes, with respective parametrizations $\gamma_1,\gamma_2$. 
Let the distance function $\rho(s,t)$ of $\gamma_1$ and $\gamma_2$ be of a special form. Then there exist nonempty open subsets of $A_1$ and $A_2$ that are contained in one of the following pairs:
\begin{enumerate}[noitemsep,topsep=1pt,label=(\roman*)]
    \item a line $\ell$ and a curve contained in a plane orthogonal to $\ell$, \label{ca:LineOrthPlane}
    \item parallel lines, \label{ca:ParLines}
    \item a line $\ell$ and an ellipse contained in a cylinder centered around $\ell$, \label{ca:LineEllCyl}
    \item CPO parabolas, \label{ca:CongOppPar}
    \item matching curves, \label{ca:Matching}
    \item matching log-circles. \label {ca:LogCircles}
\end{enumerate}
\end{theorem}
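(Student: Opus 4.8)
The plan is to set up convenient coordinates, reduce the ``special form'' hypothesis to an analytic functional equation for the distance function, and then analyze that equation via the derivative test of Lemma~\ref{le:DerivativeTest}. Since $A_1,A_2$ lie in perpendicular planes, after a rotation and translation of $\R^3$ we may assume $A_1$ lies in the plane $\mathbf{V}(z)$ and $A_2$ lies in a plane of the form $\mathbf{V}(y-c)$ for some constant $c$; rotating and translating does not change distances and hence does not change whether $\rho$ has a special form. Shrinking to subarcs, we may further parameterize $A_1$ by its $x$-coordinate whenever the $x$-coordinate is not locally constant, i.e.\ write $\gamma_1(s) = (s, f(s), 0)$ for a smooth $f$; similarly, after possibly swapping which plane is which, $\gamma_2(t) = (g(t), c, h(t))$, and we may parameterize $A_2$ by $x$ as $\gamma_2(t) = (t, c, h(t))$ unless the $x$-coordinate of $A_2$ is locally constant. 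The degenerate sub-cases where a coordinate is locally constant on one of the arcs (so the arc is a vertical segment, a horizontal segment, etc.) should be peeled off first by hand: these directly produce cases \ref{ca:LineOrthPlane}, \ref{ca:ParLines}, or reduce to Lemma~\ref{le:LineCylinderFewDistances}/\ref{le:LineOrthogonalPlaneFewDistances} situations. In the generic sub-case the distance function becomes
\[
\rho(s,t) = (s-t)^2 + (f(s)-c)^2 + h(t)^2,
\]
which already exhibits a partial additive separation: writing $\phi_1(s) = s$ and using $(s-t)^2 = s^2 - 2st + t^2$, the cross term $-2st$ is the only obstruction to $\rho$ being a sum of a function of $s$ and a function of $t$.

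Next I would apply Lemma~\ref{le:DerivativeTest}: since $\rho$ is of a special form and (after discarding the degenerate cases) $\rho_s,\rho_t$ are not identically zero on any neighborhood, there is an open set on which $\partial_s\partial_t \ln|\rho_s/\rho_t| \equiv 0$. Computing $\rho_s = 2(s-t) + 2(f(s)-c)f'(s)$ and $\rho_t = -2(s-t) + 2h(t)h'(t)$, I would substitute into the derivative-test identity and clear denominators to obtain a polynomial-differential identity relating $f, f', f'', h, h', h''$ and the variables $s,t$. Treating $s$ and $t$ as independent and matching ``coefficients'' (e.g.\ viewing the identity as a polynomial in $t$ with coefficients built from $f(s)$ and its derivatives, using that $f$ is not a polynomial conspiracy), this should force $f$ and $h$ into a short list of forms. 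Concretely I expect the identity to collapse to an ODE of the type $u u'' + (u')^2 = \alpha$ or $u'' = \beta$ for the ``radial'' combinations (leading to $f(s)-c = \sqrt{\alpha s^2 + \beta s + \gamma}$, i.e.\ a conic, or $f$ linear), together with a compatibility relation tying the two constants on the $A_1$ side to those on the $A_2$ side. The conic solutions for both arcs, with the compatibility relation, are exactly the ``matching curves'' of \eqref{FirstConic}--\eqref{SecondConic} and the CPO parabola case; the solution branch $u'' - 2 + (\text{log-derivative term}) = 0$, which integrates to $u^2 = D + A\ln(\cdot)$, is what produces the log-circle case \ref{ca:LogCircles}; linear $f$ (or linear $h$) gives the line cases \ref{ca:LineOrthPlane}, \ref{ca:ParLines}, \ref{ca:LineEllCyl}.

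The main obstacle, and the step that will require the most care, is the case analysis of the ODE/functional identity coming out of the derivative test: one must argue rigorously that matching coefficients in $s$ and $t$ is legitimate (this uses that $f,h$ are real-analytic on the relevant interval, which follows from the special-form structure via Theorem~\ref{th:CartesianProduct}-style analyticity, or can be argued directly by shrinking intervals and using that a smooth identity holding on an open set in two variables forces the separated pieces to satisfy ODEs), and that one has genuinely enumerated all solution branches rather than missing a family. In particular I would be careful about (a) sign ambiguities from the square roots in the parameterizations (choosing arcs where the sign is fixed), (b) the possibility that $f'$ or $h'$ vanishes identically on a subinterval, which must be routed back to the degenerate cases, and (c) verifying the ``compatibility constant'' that links the two sides genuinely matches the stated configurations and does not permit, say, two arbitrary unrelated ellipses. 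A secondary bookkeeping task is to confirm that each solution branch, once found as an equation satisfied by $f$ and $h$, actually corresponds after undoing the initial normalization (rotation/translation/scaling) to one of the six named configurations --- e.g.\ that a conic in the $xz$-plane whose defining constants satisfy the compatibility relation is, up to rigid motion, the curve $C_2$ of \eqref{SecondConic}, and that the $\sqrt{D + A\ln(\cdot)}$ branch is a log-circle in the sense of \eqref{eq:LogCircle}. Throughout, the verification that the derivative-test expression is \emph{not} identically zero for configurations outside this list --- which is what forces ``many distances'' in part (b) of Theorem~\ref{th:PerpendicularPlanesFewDistances} --- is deferred to the Mathematica-assisted computation, but here we only need the ``if'' direction, so that burden does not arise.
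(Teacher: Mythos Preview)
Your setup and the application of Lemma~\ref{le:DerivativeTest} match the paper's approach closely (the constant $c$ in $\mathbf{V}(y-c)$ is spurious --- perpendicular planes always intersect, so after the rigid motion you can take $c=0$, as the paper does). The divergence is in how you analyze the identity coming out of the derivative test, and this is where the proposal has a real gap.

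After computing $\rho_s = 2(s-t) + 2f(s)f'(s)$ and $\rho_t = -2(s-t) + 2h(t)h'(t)$, the paper sets $\mu(s)=s+f(s)f'(s)$ and $\nu(t)=t+h(t)h'(t)$, so that $\rho_s=2(\mu(s)-t)$, $\rho_t=2(\nu(t)-s)$, and the derivative-test identity collapses to
\[
\frac{\mu'(s)}{(\mu(s)-t)^2}=\frac{\nu'(t)}{(\nu(t)-s)^2}.
\]
This is the content of Lemma~\ref{le:ImportantLemma}. The paper then \emph{fixes} $t$ to turn this into a separable ODE for $\mu$, integrates once to get $1/(\mu(s)-C_1)=C_4 - C_2/(C_3-s)$, and concludes that $\mu$ is either linear or of the form $B_1/(s+B_2)+B_3$; symmetrically for $\nu$. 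Matching the two forms back in the cross-multiplied identity pins down the constants and yields exactly two families for $f f'$ and $h h'$: a linear one (giving, after one more integration, parallel lines, CPO parabolas, matching conics, and the line--ellipse case) and a rational one $A_1/(s+A_2)+A_3-s$, whose integral introduces $\ln|s+A_2|$ and produces the log-circles.

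Your plan to ``clear denominators and match coefficients as a polynomial in $t$'' does not go through as stated: after clearing denominators you get $\mu'(s)(\nu(t)-s)^2=\nu'(t)(\mu(s)-t)^2$, which is \emph{not} polynomial in $t$ unless you already know $\nu$ is rational, and $f,h$ are only assumed smooth, so no analyticity is available to justify coefficient matching. Relatedly, the ODE forms you anticipate, $uu''+(u')^2=\alpha$ and $u''=\beta$, only integrate to $u^2$ quadratic in $s$ (the conic branch); they cannot produce the $\ln$ term, so your case list would miss~\ref{ca:LogCircles}. The fix is exactly the paper's move: freeze one variable, integrate the resulting ODE for $\mu$ (or $\nu$), and classify its form --- the rational branch of $\mu$ is what generates the log-circles.
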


Before proving Theorem \ref{th:PerpendicularPlanesSpecialForm}, we observe that Theorem \ref{th:PerpendicularPlanesFewDistances} is a corollary of Theorem  \ref{th:PerpendicularPlanesSpecialForm}.
We first recall the statement of that result.
\vspace{2mm}

\noindent {\bf Theorem \ref{th:PerpendicularPlanesFewDistances}.}
\emph{Let $C_1$ and $C_2$ be curves that are contained in perpendicular planes in $\R^3$. \\[2mm]
(a) Assume that $C_1$ and $C_2$ are one of the following: 
\begin{itemize}[noitemsep,topsep=0pt]
    \item a line $\ell$ and a curve contained in a  plane orthogonal to $\ell$, 
    \item parallel lines, 
    \item a line $\ell$ and an ellipse contained in cylinder with axis $\ell$, 
    \item CPO parabolas, 
    \item matching curves, 
    \item perpendicular circles.
\end{itemize}
Then $(C_1,C_2)$ spans few distances. \\[2mm]
(b) If $(C_1,C_2)$ is not one of the configuration from part (a), then $(C_1,C_2)$ spans many distances.}
\begin{proof}
(a) Immediate from Theorem \ref{th:LogCirclesFewDistances} and Lemmas \ref{le:LineCylinderFewDistances}, \ref{le:LineOrthogonalPlaneFewDistances}, \ref{le:PerpendicularParabolasFewDistances}, and \ref{le:MatchingEllipseHyperbolaFewDistances}.

(b) Consider a set of $m$ points $\pts_1\subset C_1$ and a set of $n$ points $\pts_2\subset C_2$. 
By Lemma \ref{le:SmoothParametrization} there exist arcs $A_1\subset C_1$ and $A_2\subset C_2$ such that $|\pts_1\cap A_1|=\Theta(m)$ and $|\pts_2\cap A_2|=\Theta(n)$.
Let $\gamma_1(s)$ and $\gamma_2(t)$ be the respective parameterizations of $A_1$ and $A_2$.
Assume for contradiction that $(A_1,A_2)$ does not span many distances. 
By Lemma \ref{le:SpecialFormOrExpands}, the distance function $\rho(s,t)$ of $\gamma_1(s)$ and $\gamma_2(t)$ is of a special form. 

By Theorem \ref{th:PerpendicularPlanesSpecialForm}, there exist nonempty open subsets $A_1'\subset A_1$ and $A_2'\subset A_2$ that are contained in a specific pair of sets. The six possible pairs of sets are numbered (i)--(vi) in the statement of Theorem \ref{th:PerpendicularPlanesSpecialForm}.
Assume that we are in one of cases (i)--(v), which describe pairs of irreducible curves.
Then each of $C_1$ and $C_2$ has an infinite intersection with the corresponding curve.
Theorem \ref{th:MilnorThom} implies that $C_1$ and $C_2$ share connected components with the respective curves. 
Since these are irreducible curves, we conclude that $C_1$ and $C_2$ are the two curves from Theorem \ref{th:PerpendicularPlanesSpecialForm}.
This is a contradiction, since the current theorem assumes that $C_1$ and $C_2$ are none of these pair of curves. 

It remains to consider case (vi) of Theorem \ref{th:PerpendicularPlanesSpecialForm}. 
That is, the case where $A_1$ and $A_2$ are contained in matching log-circles.
Since log-circles include a logarithm in their defining equations, we cannot use Theorem \ref{th:PerpendicularPlanesSpecialForm} as in the preceding paragraph.
However, a set that is defined as in \eqref{eq:LogCircle} still has a bounded intersection with a curve. 
For example, see Chapter 1 of Khovanski\u{\i} \cite{Khovanskii91}.
The rest of the argument remains as in the preceding paragraph and leads to a contradiction. 
\end{proof}

The rest of this section is dedicated to proving Theorem  \ref{th:PerpendicularPlanesSpecialForm}.
We first prove three lemmas that are required for the proof of that theorem. 
The reader might prefer to skim those lemmas and return to them later, if needed.

With Lemma \ref{le:LineAlgebraic} in mind, the following lemma studies ellipses that are contained in a cylinder.
The notation $\langle u,v \rangle$ denotes the standard dot product of $u,v \in \R^3$. 

\begin{lemma}\label{le:EllipseInCylinder}
For nonzero $\mu,D \in \R$, we consider the line 
\[ \ell = \Vb(x-\mu z, y)\subset \R^3, \]
and the ellipse
\[
C = \Vb\left(\frac{x^2}{\mu^2+1}+y^2-D, z\right) \subset \R^3.
\]
Then $C$ is contained in a cylinder centered around $\ell$.
\end{lemma}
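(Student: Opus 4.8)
The goal is to exhibit a cylinder with axis $\ell$ that contains every point of $C$. A right circular cylinder with axis $\ell$ is precisely the locus of points at a fixed distance $r$ from $\ell$, so the plan is to compute the distance from a general point $(x,y,0)\in C$ to the line $\ell$ and show that this quantity is constant on $C$. First I would pick a convenient description of $\ell$: the line $\Vb(x-\mu z,\,y)$ passes through the origin with direction vector $\vecv=(\mu,0,1)$ (normalized, $\hat{\vecv}=(\mu,0,1)/\sqrt{\mu^2+1}$). For a point $p=(x,y,0)$ on $C$, the squared distance to $\ell$ through the origin is $\|p\|^2-\langle p,\hat\vecv\rangle^2$.

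\textbf{Main computation.} Substituting, $\|p\|^2 = x^2+y^2$ and $\langle p,\hat\vecv\rangle = \mu x/\sqrt{\mu^2+1}$, so
\[
\operatorname{dist}(p,\ell)^2 = x^2+y^2-\frac{\mu^2 x^2}{\mu^2+1} = \frac{x^2}{\mu^2+1}+y^2.
\]
But the defining equation of $C$ says exactly $\frac{x^2}{\mu^2+1}+y^2 = D$ for all $(x,y,0)\in C$. Hence every point of $C$ lies at distance $\sqrt{D}$ from $\ell$, which is the required constant (note $D>0$ is forced for $C$ to be a nonempty real ellipse, so $\sqrt D$ makes sense). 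Therefore $C$ is contained in the right circular cylinder of radius $\sqrt D$ with axis $\ell$, namely the surface
\[
\Vb\!\left(\bigl(x^2+y^2+z^2\bigr) - \frac{(x-\mu z)^2 + y^2(\mu^2+1)}{\mu^2+1}\cdot\frac{1}{1} - D\right),
\]
or more cleanly $\Vb\bigl(\|p\|^2 - \tfrac{\langle p,\vecv\rangle^2}{\|\vecv\|^2} - D\bigr)$ with $\vecv=(\mu,0,1)$.

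\textbf{Expected obstacle.} There is no real obstacle here; the lemma is essentially a one-line algebraic identity once one recognizes that the coefficient $\frac{1}{\mu^2+1}$ in the equation of $C$ was reverse-engineered precisely to match the foreshortening factor $1-\cos^2\theta$ coming from the tilt angle $\theta$ of $\ell$ against the $xy$-plane. The only mild care needed is bookkeeping: confirming that $\ell$ indeed has direction $(\mu,0,1)$ (solve $x=\mu z$, $y=0$), confirming that the $z=0$ restriction of $C$ is consistent with the distance formula (the $z$-coordinate of $p$ is $0$, which is why only $x,y$ appear), and noting that the claim is vacuous unless $D>0$. I would also remark that this is the local model used later in Theorem~\ref{th:PerpendicularPlanesSpecialForm} to identify case~\ref{ca:LineEllCyl}, so the explicit cylinder is worth recording rather than just asserting existence.
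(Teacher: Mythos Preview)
Your proof is correct and follows essentially the same approach as the paper: both compute the distance from a general point of $C$ to the line $\ell$ (using that $\ell$ passes through the origin with direction $(\mu,0,1)$) and verify that this distance is the constant $\sqrt{D}$. Your use of the formula $\operatorname{dist}(p,\ell)^2=\|p\|^2-\langle p,\hat\vecv\rangle^2$ is slightly slicker than the paper's explicit computation of the projection point, but the content is identical. One cosmetic slip: in your displayed cylinder equation you subtracted the squared distance from $\|p\|^2$ rather than setting the squared distance equal to $D$; the correct surface is simply $\Vb\bigl(\|p\|^2-\langle p,\vecv\rangle^2/\|\vecv\|^2-D\bigr)$, as you write immediately after.
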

\begin{proof}
We consider a unit vector in the direction of $\ell$:  \[ v=\left(\frac{\mu}{\sqrt{1+\mu^2}},0,\frac{1}{\sqrt{1+\mu^2}}\right).\] 
We also consider a point 
\[ p=\left(p_x,\sqrt{D-\frac{p_x^2}{\mu^2+1}},0\right)\in C. \]

The orthogonal projection of $p$ on $\ell$ is
\[ \langle p,v\rangle \cdot v = p_x\cdot \frac{\mu}{\sqrt{1+\mu^2}} \cdot v = \left(\frac{\mu^2p_x}{1+\mu^2},0,\frac{\mu p_x}{1+\mu^2}\right). \]
We set $q=\langle p,v\rangle \cdot v$.
The distance between $p$ and $\ell$ is the distance between $p$ and $q$. 
The square of this distance is
\begin{align*} \left(p_x-\frac{\mu^2p_x}{1+\mu^2}\right)^2 &+ D-\frac{p_x^2}{\mu^2+1} +  \left(\frac{\mu p_x}{1+\mu^2}\right)^2  \\[2mm]
&=D+p_x^2\cdot\frac{(1+\mu^2)^2-2\mu^2(1+\mu^2)+\mu^4-(1+\mu^2)+\mu^2}{(1+\mu^2)^2} = D.
\end{align*}
Since this holds for every $p\in C$, we conclude that $C$ is contained in a cylinder of radius $\sqrt{D}$ around $\ell$.
\end{proof}

The following lemma characterizes when a distance function $\rho(s,t)$ does not depend on $s$ or $t$. 
Recall that $\rho_s$ and $\rho_t$ are the first partial derivatives of $\rho(s,t)$ according to $s$ and $t$, respectively.

\begin{lemma}\label{le:DependsOnST}
Let $\gamma_1(s)$ and $\gamma_2(t)$ be parametrizations of arcs $A_1$ and $A_2$, respectively. 
Let $\rho(s,t)$ be the distance function of $\gamma_1(s)$ and $\gamma_2(t)$.
Assume that $\rho_s$ or $\rho_t$ is identically zero.
Then one of the two arcs is contained in a circle $C$ and the other arc is contained in the axis of $C$.
\end{lemma}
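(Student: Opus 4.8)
The plan is to work out what vanishing of $\rho_s$ (say) forces geometrically, and the key observation is that $\rho_s \equiv 0$ means the squared distance from $\gamma_1(s)$ to the fixed point $\gamma_2(t)$ is independent of $s$ for \emph{every} $t$. First I would fix an arbitrary $t_0$ and note that all points of the arc $A_1$ lie on a single sphere centered at $q_0 = \gamma_2(t_0)$; varying $t_0$, the arc $A_1$ lies on a sphere centered at $\gamma_2(t)$ for every $t$ in the parameter interval. So $A_1$ lies in the intersection of the sphere family $\{S_q : q \in A_2\}$. Since $A_2$ is an arc (one–dimensional, with at least two distinct points, indeed a continuum of points), pick two distinct centers $q_1 \ne q_2$ on $A_2$; then $A_1 \subset S_{q_1} \cap S_{q_2}$, which is a circle $C$ lying in the plane that perpendicularly bisects the segment $q_1 q_2$ (the radical plane of the two spheres), with center on the line $q_1 q_2$ and axis equal to that line.

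Next I would pin down $A_2$. Every point $q$ of $A_2$ is equidistant from all points of $A_1$, hence equidistant from the (at least two, in fact infinitely many distinct) points of $C \supseteq A_1$; a point equidistant from all points of a circle $C$ must lie on the axis of $C$. Indeed, if $q$ is equidistant from three non-collinear points of $C$ it lies on the axis: the locus of points equidistant from two fixed points is a plane, and intersecting the perpendicular bisector planes of two independent chords of $C$ gives exactly the axis line. Since $A_1$ contains infinitely many distinct points of $C$ (it is an arc, so a non-degenerate connected subset), we may choose three that are not collinear, and conclude $A_2$ is contained in the axis of $C$. This is precisely the asserted conclusion: one arc is contained in a circle $C$, the other in the axis of $C$. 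The symmetric case $\rho_t \equiv 0$ is handled by swapping the roles of $\gamma_1$ and $\gamma_2$.

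A couple of small points need care rather than being genuine obstacles. One is the degenerate possibility that $A_1$ does not actually determine a full circle — e.g. if $A_1$ is contained in a line. But a line meets a sphere in at most two points, so if $A_1 \subset S_{q_1}$ and $A_1$ is an arc (infinitely many points) then $A_1$ is not contained in a line; similarly $A_1$ cannot be a single point. Thus $A_1 \subset S_{q_1}\cap S_{q_2}$ is genuinely a circle and contains infinitely many of its points, legitimizing the choice of three non-collinear ones. The other point is that $A_1$ might lie in the intersection $S_{q_1}\cap S_{q_2}$ as a \emph{point} if the spheres are tangent, but tangency would again force $A_1$ to be a single point, excluded as above. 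I expect the main (very mild) obstacle is just being careful that "arc" supplies enough points — infinitely many, not collinear — to run these classical equidistance arguments; once that is noted, the proof is a short sequence of elementary facts about spheres, radical planes, and the axis of a circle.
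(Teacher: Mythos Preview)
Your proposal is correct and follows essentially the same approach as the paper: assume $\rho_s\equiv 0$, intersect two spheres centered at distinct points of $A_2$ to trap $A_1$ in a circle $C$, then use that any point equidistant from an arc of $C$ lies on the axis of $C$ to conclude $A_2$ lies on that axis. You are in fact more careful than the paper about ruling out degeneracies (tangency of spheres, collinearity of points on $A_1$), which the paper leaves implicit.
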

\begin{proof}
Without loss of generality, suppose that $\rho_s$ is identically zero. 
We consider two points $p_1,p_2\in A_2$.
By the assumption on $\rho_s$, the points of $A_1$ are equidistant from $p_1$.
That is, $A_1$ is contained in a sphere $S_1$ centered at $p_1$.
Similarly, $A_1$ is contained in a sphere $S_2$ centered at $p_2$.
Since $S_1 \cap S_2$ is a circle, we get that $A_1$ is contained in a circle $C$.
An arc of the circle $C$ is equidistant from a point $p$ if and only if $p$ is on the axis of $C$.
Thus, $A_2$ is contained in the axis of $C$.
\end{proof}

We also require the following highly specialized result. 

\begin{lemma}\label{le:ImportantLemma}
For nonempty open intervals $I_1,I_2\subset \R$, consider smooth functions $f:I_1\to \R$ and $g:I_2\to \R$, such that 
\begin{align*}
\sigma(s,t)&=(s-t)^2+f(s)^2+g(t)^2.
\end{align*}
If 
\begin{equation}\label{eq:TwoSides}
\frac{\partial^2(\ln|\sigma_s/\sigma_t|)}{\partial s \partial t}\equiv 0,
\end{equation}
then there exist $A_1,A_2,A_3\in \R$ that satisfy one of the following cases:
\begin{enumerate}[label=(\roman*)]
    \item $f(s)f'(s)=A_1+A_2s-s$, \quad $A_2\neq 0$, \quad and $\quad g(t)g'(t)=-A_1/A_2+t/A_2-t,$
    \item $f(s)f'(s)=\frac{A_1}{s+A_2}+A_3-s\quad $ and $\quad g(t)g'(t)=\frac{A_1}{t-A_3}-A_2-t.$
\end{enumerate}
\end{lemma}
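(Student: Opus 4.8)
The plan is to unwind the hypothesis \eqref{eq:TwoSides} into an ordinary differential equation that the pair $(f,g)$ must satisfy, and then integrate it. The first step is to compute the partial derivatives of $\sigma$. Writing $F(s) = f(s)f'(s)$ and $G(t) = g(t)g'(t)$, we have
\[
\sigma_s = 2(s-t) + 2f(s)f'(s) = 2\big((s-t) + F(s)\big), \qquad \sigma_t = -2(s-t) + 2g(t)g'(t) = 2\big(-(s-t) + G(t)\big).
\]
So $\ln|\sigma_s/\sigma_t| = \ln|s - t + F(s)| - \ln|{-s} + t + G(t)| + (\text{constant})$. Applying $\partial_s$ gives $\dfrac{1 + F'(s)}{s - t + F(s)} - \dfrac{-1}{-s + t + G(t)}$, and then applying $\partial_t$ and setting the result to zero yields, after clearing denominators, a polynomial identity in the quantities $s-t+F(s)$ and $-s+t+G(t)$. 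Concretely I expect
\[
\frac{\big(1 + F'(s)\big)}{\big(s - t + F(s)\big)^2} \;=\; \frac{\big(G'(t) - 1\big)}{\big(-s + t + G(t)\big)^2},
\]
or equivalently $\big(1+F'(s)\big)\big(G(t) - s + t\big)^2 = \big(G'(t) - 1\big)\big(F(s) + s - t\big)^2$ identically on $I_1 \times I_2$.

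**Next I would** exploit the separation-of-variables structure of this identity. Introduce the shifted functions $u(s) = F(s) + s$ and $w(t) = G(t) - t$, so that $F(s) + s - t = u(s) - t$ and $G(t) - s + t = w(t) - s$, while $1 + F'(s) = u'(s)$ and $G'(t) - 1 = w'(t)$. The identity becomes
\[
u'(s)\,\big(w(t) - s\big)^2 \;=\; w'(t)\,\big(u(s) - t\big)^2 \qquad \text{for all } (s,t)\in I_1\times I_2.
\]
Expanding both sides as polynomials in $t$ (with $s$ fixed) and comparing coefficients of $t^0, t^1, t^2$ — or, symmetrically, expanding in $s$ — converts the functional equation into a small system of ODEs relating $u, u', w, w'$. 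The natural move is to differentiate the identity with respect to $s$ and with respect to $t$ and take ratios to eliminate one function; this should show that $u'(s)$ is a rational function of $s$ of a very restricted shape (degree at most a small constant in numerator and denominator), and similarly for $w'$. Integrating $u'(s)$ then gives $u(s)$, hence $F(s) = f(s)f'(s) = u(s) - s$, in one of the two closed forms listed — case (i) being the "affine $u$" branch (which, tracing back, forces $A_2 \ne 0$ so that the two sides are genuinely nonconstant and consistent) and case (ii) being the "$u$ has a simple pole" branch, with the pole location and residue matched between the $s$-side and $t$-side exactly as in the statement ($A_2$ is the pole offset for $s$, $A_3$ the residue-type constant shared across sides). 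The bookkeeping that the same three constants $A_1, A_2, A_3$ govern both $f$ and $g$ comes precisely from matching the coefficients in the separated identity.

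**The main obstacle** will be the case analysis in integrating the ODE: after comparing coefficients, one must carefully track when denominators vanish, when a putative pole is actually absent (collapsing case (ii) into case (i)), and when the constants of integration on the two sides are forced to agree. In particular I expect the identity, when rearranged, to say that $\dfrac{u(s)-t}{\sqrt{u'(s)}}$ equals (up to sign) $\dfrac{w(t)-s}{\sqrt{w'(t)}}$ — a single expression that is simultaneously a function built from $s$ alone shifted by $-t$ and one built from $t$ alone shifted by $-s$ — and squeezing out all solutions of that requires dividing into the subcase where $u'$ is constant (giving the affine/quadratic solution, case (i)) versus where it is not (forcing the reciprocal-linear form, case (ii)). A secondary, purely technical point is that $f, g$ need not be sign-definite, so one works throughout with $F = ff'$ and $G = gg'$ rather than with $f, g$ directly, which is exactly why the conclusion is phrased in terms of $f(s)f'(s)$ and $g(t)g'(t)$; no integration back to $f$ itself is attempted, and none is needed for the downstream application in the proof of Theorem \ref{th:PerpendicularPlanesSpecialForm}.
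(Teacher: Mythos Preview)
Your approach is essentially the same as the paper's: both introduce $u(s)=s+f(s)f'(s)$ and $w(t)=t+g(t)g'(t)$ (the paper calls them $\mu,\nu$), reduce \eqref{eq:TwoSides} to the key identity $u'(s)\,(w(t)-s)^2=w'(t)\,(u(s)-t)^2$, and then classify $u,w$ as either affine or linear-fractional before matching constants across the two sides. Two small remarks: (a) you have sign slips --- it should be $w(t)=G(t)+t$ (not $G(t)-t$) and the right-hand numerator is $1+G'(t)$ (not $G'(t)-1$), though your final identity comes out correct; (b) your suggestion to ``compare coefficients of $t^0,t^1,t^2$'' does not work directly since $w(t)$ is itself unknown, but your alternative idea does: the paper freezes $t=t_0$, observes that $\dfrac{u'(s)}{(u(s)-t_0)^2}=\dfrac{\text{const}}{(\text{const}-s)^2}$ is a separable ODE, and integrates it in one line to get the affine/linear-fractional dichotomy for $u$ (and symmetrically for $w$). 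The subsequent $2\times 2$ case analysis and coefficient matching are exactly as you anticipate.
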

\begin{proof}
We set $\mu(s)=s+f(s)f'(s)$ and $\nu(t)=t+g(t)g'(t)$.
Then 
 \begin{align}
 \sigma_s&=2(s-t)+2f(s)f'(s)=2(\mu(s)-t), \nonumber\\[2mm]
 \sigma_t&=2(t-s)+2g(t)g'(t)=2(\nu(t)-s). \label{eq:SigmaDerivs}
\end{align}
Let $\sigma_{st}$ be the second derivative of $\sigma$ according to $s$ and $t$ and note that $\sigma_{st}=-2$.

\parag{Studying $\mu(s)$ and $\nu(t)$.} By splitting the quotient inside the log at \eqref{eq:TwoSides}, we get that
 \begin{equation*} 
 \frac{\partial^2 \ln |\sigma_s|}{\partial s \partial t}
 =\frac{\partial^2 \ln |\sigma_t|}{\partial s \partial t}.
 \end{equation*}
 Applying the derivative rule for logarithms leads to 
 \[ \frac{\partial  (\sigma_{st}/\sigma_s)}{\partial s}
 =\frac{\partial (\sigma_{st}/\sigma_t)}{ \partial t}. \]
Combining this with the above values for $\sigma_s, \sigma_t, \sigma_{st}$ leads to
 \[
 \frac{\partial (-2/(2(\mu(s)-t)))}{\partial s}
=\frac{\partial(-2/(2(\nu(t)-s)))}{\partial t},
 \]
 
By computing the above derivatives, we get that 
 \begin{equation}\label{eq:EqualDerivs}
 \frac{\mu'(s)}{(\mu(s)-t)^2}=\frac{\nu'(t)}{(\nu(t)-s)^2}.
 \end{equation}
The assumption \eqref{eq:TwoSides} implies that $\sigma_s$ and $\sigma_t$ are never zero. 
Combining this with \eqref{eq:SigmaDerivs} leads to $\mu(s)\neq t$ and $\nu(t)\neq s$. 
Thus, the denominators in \eqref{eq:EqualDerivs} are never zero.

For a fixed $t\in I_2$, we set $C_1=t$, $C_2=\nu'(t)$, and $C_3=\nu(t)$.
Then,  \eqref{eq:EqualDerivs} becomes
\[
\frac{\mu'(s)}{(\mu(s)-C_1)^2}=\frac{C_2}{(C_3-s)^2}.
\]
Integrating both sides according to $s$ leads to 
\[
\frac{1}{\mu(s)-C_1}=\frac{-C_2}{C_3-s}+C_4 = \frac{C_3C_4-C_4s-C_2}{C_3-s},
\]
for some $C_4\in \R$.
Since the left side of this equation is nonzero, we have that $C_3C_4-C_4s-C_2\neq 0$.
We may thus rearrange this equation as 
\[
\mu(s)= \frac{C_3-s}{C_3C_4-C_4s-C_2}-C_4
\]

When $C_4=0$, we can rewrite the above as $\mu(s)=B_1+B_2s$, for some $B_1,B_2\in \R$. When $C_4\neq 0$, we can rewrite the above as 
\[
\mu(s)=\frac{B_1}{B_2+s}+B_3.
\]
We call the first form of $\mu(s)$ the \emph{linear form} and the second the \emph{rational form}.
A symmetric argument with a fixed $s$ implies that either $\nu(t)=B_1'+B_2't$ or $\nu(t)=\frac{B'_1}{B'_2+t}+B_3'$.

We rewrite \eqref{eq:EqualDerivs} as \begin{equation}\label{eq:CrossMultiplied}
\mu'(s)(\nu(t)-s)^2=\nu'(t)(\mu(s)-t)^2.
\end{equation}

\parag{The case of two linear forms.} We first assume that both $\mu(s)$ and $\nu(t)$ have a linear form. 
Plugging these forms into \eqref{eq:CrossMultiplied} gives
\begin{align*}
B_2(B_1'+B_2't-s)^2&=B_2'(B_1+B_2s-t)^2, \quad \text{ or equivalently} \\[2mm]
B_2(B_1'^2+B_2'^2t^2+s^2+2B_1'B_2't-2B_1's-2B_2'st)
&=B_2'(B_1^2+B_2^2s^2+t^2+2B_1B_2s-2B_1t-2B_2st).
\end{align*}
Cancelling the identical last term on both sides leads to
\begin{equation}\label{eq:Reduced}
B_2(B_1'^2+B_2'^2t^2+s^2+2B_1'B_2't-2B_1's)
=B_2'(B_1^2+B_2^2s^2+t^2+2B_1B_2s-2B_1t)
\end{equation}
By comparing the coefficients of $s^2$ on both sides, we obtain that $B_2=B_2^2B_2'$.
The coefficients of $t^2$ imply that $B_2'=B_2(B_2')^2$.
Combining these two observations implies either $B_2=B_2'=0$ or $B_2B_2' = 1$.
When $B_2=B_2'=0$, both $\mu(s)$ and $\nu(t)$ are constant functions, so we are in case (ii) of the lemma with $A_1=0$.
When $B_2B_2' = 1$, comparing the coefficients of $t$ in \eqref{eq:Reduced} implies that $B_1'=-B_1B_2'$.
We can rephrase this as $B_1'=-B_1/B_2$, which leads to $\nu(t)=-B_1/B_2+t/B_2$.
This is case (i).

\parag{The case of one linear form.}
We next consider the case where $\mu(s)$ has a linear form and $\nu(t)$ has a rational form.
In this case, \eqref{eq:CrossMultiplied} becomes  
\[
B_2\left(\frac{B_1'}{B_2'+t}+B_3'-s\right)^2
=\frac{-B_1'}{(B_2'+t)^2}(B_1+B_2s-t)^2.
\]
Multiplying both sides by $(B_2'+t)^2$ leads to
\[
B_2(B_1'+(B_3'-s)(B_2'+t))^2=-B_1'(B_1+B_2s-t)^2.
\]
Comparing the coefficients of $s^2t^2$ on both sides implies that $B_2=0$. 
This turns the above equation to $0=B_1'(B_1-t)^2$, which in turn implies that $B_1'=0$.
Once again, $\mu(s)$ and $\nu(t)$ are constants, so we are in case (ii) with $A_1=0$. 

The case where $\mu(s)$ has a rational form and $\nu(t)$ has a linear form is handled symmetrically.

\parag{The case of two rational forms.}
We consider the case where both $\mu(s)$ and $\nu(t)$ have a rational form. In this case,  \eqref{eq:CrossMultiplied} becomes
\[
\frac{B_1}{(B_2+s)^2}\left(\frac{B_1'}{B_2'+t}+B_3'-s\right)^2
=\frac{B_1'}{(B_2'+t)^2}\left(\frac{B_1}{B_2+s}+B_3-t\right)^2.
\]
Multiplying by $(B_2+s)^2(B_2'+t)^2$ leads to
\[
B_1(B_1'+(B_3'-s)(B_2'+t))^2=B_1'(B_1+(B_3-t)(B_2+s))^2.
\]
Comparing the coefficients of $s^2t^2$ implies that $B_1=B_1'.$ If $B_1=B_1'=0$ then $\mu(s)$ and $\nu(t)$ are constants and we are in case (ii) with $A_1=0$. Otherwise, the above equation becomes 
\begin{align*}
(B_1+(B_3'-s)(B_2'+t))^2&=(B_1+(B_3-t)(B_2+s))^2, \quad \text{ or equivalently} \\[2mm]
B_1+(B_3'-s)(B_2'+t)&=\pm(B_1+(B_3-t)(B_2+s)).
\end{align*}

The $\pm$ symbol must be replaced with a plus, since otherwise the coefficients of $st$ do not match. 
Comparing the coefficients of $s$ implies that $B_2'=-B_3$.
Comparing the coefficients of $t$ implies that $B_3'=-B_2$.
We conclude that $\nu(t)=\frac{B_1}{-B_3+t}-B_2$, which is case (ii).
\end{proof}

We are finally ready to prove Theorem \ref{th:PerpendicularPlanesSpecialForm}. 

\begin{proof}[Proof of Theorem \ref{th:PerpendicularPlanesSpecialForm}.]
By rotating and translating $\R^3$, we may assume that $A_1$ is contained in the $xy$-plane and $A_2$ is contained in the $xz$-plane. 
That is, both $\gamma_{1,z}(s)$ and $\gamma_{2,y}(t)$ are identically zero. 
If the $x$-coordinate of $\gamma_1$ is constant on an interval in $(0,1)$, then $A_1$ contains a line segment parallel to the $y$-axis.
This implies that we are in case \ref{ca:LineOrthPlane}.
A symmetric argument holds when $\gamma_2$ has constant $x$-coordinate in an interval of $(0,1)$. 
We may thus assume that $\gamma_{1,x}$ and $\gamma_{2,x}$ are not constant in any interval.

We next consider the case where $\rho_s\equiv 0$ or $\rho_t\equiv 0$ in a nonempty neighborhood of $(0,1)^2$. 
By defining $A_1'$ and $A'_2$ as the corresponding subarcs,  Lemma \ref{le:DependsOnST} implies that we are in case \ref{ca:LineOrthPlane}. We may thus assume that $\rho_s\not\equiv 0$ and $\rho_t \not\equiv 0$ in every neighborhood. 
By Lemma \ref{le:DerivativeTest}, there exist nonempty open intervals $I_1,I_2 \subset (0,1)$ such that 
\[
\frac{\partial^2 \ln|\rho_s/\rho_t|}{\partial s \partial t}\equiv 0 \quad \text{in}\ I_1\times I_2.
\]

Since $\gamma_{1,x}$ and $\gamma_{2,x}$ are not constant in any interval, there exist $s_0\in I_1$ and $t_0\in I_2$ such that $\gamma_{1,x}'(s_0)\neq 0$ and $\gamma_{2,x}'(t_0)\neq 0.$ 
By the inverse function theorem, there exist a neighborhood $J_1\subset \R$ of $\gamma_{1,x}(s_0)$ and a neighborhood $J_2\subset \R$ of $\gamma_{2,x}(t_0)$ in which $\gamma_{1,x}^{-1}$ and $\gamma_{2,x}^{-1}$ are well-defined and smooth. 
We consider the parameterizations
$\widetilde{\gamma}_1(s)=\gamma_1\circ \gamma_{1,x}^{-1}$ and
$\widetilde{\gamma}_2(t)=\gamma_2\circ \gamma_{2,x}^{-1}(t)$.
Note that $\widetilde{\gamma}_1(s)$ parameterizes a subarc of $A_1$, when restricted to $J_1$. 
Similarly, $\widetilde{\gamma}_2(t)$ parameterizes a subarc of $A_2$, when restricted to $J_2$.  
 
Let $\widetilde{\rho}(s,t)$ be the distance function of $\widetilde{\gamma}_1(s)$ and $\widetilde{\gamma}_2(t)$. 
This is a slight abuse of notation, since the domain of this distance function is $J_1\times J_2$ rather than $(0,1)^2$.
We also set $f(s)=\widetilde{\gamma}_{1,y}(s)$ and $g(t)=\widetilde{\gamma}_{2,z}(t)$.
Since $\widetilde{\gamma}_{1,x}(s)=\gamma_{1,x}((\gamma_{1,x}^{-1}(s))=s$ and
 $\widetilde{\gamma}_{2,x}(t)=\gamma_{2,x}(\gamma_{2,x}^{-1}(t))=t$, we have that
\begin{align*}
 \widetilde{\rho}(s,t)
 &=(\widetilde{\gamma}_{1,x}(s)-\widetilde{\gamma}_{1,x}(t))^2
 +\widetilde{\gamma}_{1,y}(s)^2
 +\widetilde{\gamma}_{2,z}(t)^2\\[2mm]
 &=(s-t)^2
 +f(s)^2+g(t)^2.
 \end{align*}
Since $\widetilde{\rho}(s,t)=\rho(\gamma_{1,x}^{-1}(s),\gamma_{2,x}^{-1}(t))$, we have that
 \begin{equation*}
 \frac{\partial^2\ln|\widetilde{\rho}_s/\widetilde{\rho}_t|}{\partial s \partial t}\equiv 0 \quad \text{in}\ J_1\times J_2. 
 \end{equation*}

To complete the proof, it suffices to show that there exist continuous open subsets of $\widetilde{\gamma}_1(J_1)$ and $\widetilde{\gamma}_2(J_2)$ that are contained in one of the pairs from the statement of the theorem.
By the above, we may apply Lemma \ref{le:ImportantLemma} with $\widetilde{\rho}(s,t)$. This lemma implies that there exist $A_1,A_2,A_3\in \R$ that satisfy one of the following cases:
\begin{enumerate}[label=(\alph*)]
    \item $f(s)f'(s)=A_1+A_2s-s$, \quad $A_2\neq 0$, \quad and $\quad g(t)g'(t)=-A_1/A_2+t/A_2-t,$
    \item $f(s)f'(s)=\frac{A_1}{s+A_2}+A_3-s\quad $ and $\quad g(t)g'(t)=\frac{A_1}{t-A_3}-A_2-t.$
\end{enumerate}

We note that 
\[
(f(s)^2)'=2f(s)f'(s) \qquad \text{ and } \qquad
(g(t)^2)' = 2g(t)g'(t).
\]
This in turn implies that 
\begin{equation} \label{eq:SquareIntegral}
f(s)^2=2\int f(s)f'(s)\, ds \qquad \text{ and } \qquad g(t)^2 = 2\int g(t)g'(t)\, dt.
\end{equation}

\parag{\textbf{Case (a).}}
We first assume that we are in case (a). 
Then there exist $A_4,A_5\in \R$ such that \eqref{eq:SquareIntegral} becomes 
\begin{align}
f(s)^2 &= 2A_1s+s^2(A_2-1) + A_4, \nonumber \\[2mm]
g(t)^2 &= -2A_1t/A_2+t^2(1/A_2-1)+A_5. \label{eq:LinearCaseAfterIntegration}
\end{align}

When $A_1=0$ and $A_2=1$, both $f(s)$ and $g(t)$ are constant functions. 
Then, $\widetilde{\gamma}_1(J_1)$ and $\widetilde{\gamma}_2(J_2)$ are line segments parallel to the $x$-axis.
We are thus in case \ref{ca:ParLines} of the theorem.

When $A_1\neq 0$ and $A_2=1$, we get that
\begin{align*}
\widetilde{\gamma}_1(J_1) &= \{(s,\sqrt{2A_1s+A_4},0):s\in J_1\}, \\[2mm]
\widetilde{\gamma}_2(J_2) &= \{(t,0,\sqrt{A_5-2A_1t}):t\in J_2\}.
\end{align*}
We note that $\widetilde{\gamma}_1(J_1)$ is a segment of the parabola $\Vb(z,x-(y^2-A_4)/2A_1)$.
Similarly, $\widetilde{\gamma}_2(J_2)$ is a segment of the parabola $\Vb(y,x+(z^2-A_5)/2A_1)$.
It is not difficult to verify that these are CPO parabolas. 
In particular, the axis of both parabolas is the $x$-axis of $\R^3$.
We are thus in Case \ref{ca:CongOppPar}. 

It remains to consider the case where $A_2 \neq 1$. 
In this case, there exist $A_4',A_5'\in \R$ such that we can rewrite \eqref{eq:LinearCaseAfterIntegration} as
\begin{align*}
f(s)^2 &= 2A_1s+s^2(A_2-1) + A_4 = (A_2-1)\left(s+\frac{A_1}{A_2-1}\right)^2+A'_4,  \\
g(t)^2 &= -2A_1t/A_2+t^2(1/A_2-1)+A_5 = \frac{1-A_2}{A_2}\left(t+\frac{A_1}{A_2-1}\right)^2+A_5'. \end{align*}

Imitating the above analysis  that led to the CPO parabolas, we obtain that
\begin{align*}
\widetilde{\gamma}_1(J_1) &\subset \Vb\left(z,y^2-(A_2-1)\left(x+\frac{A_1}{A_2-1}\right)^2-A_4'\right), \\
\widetilde{\gamma}_2(J_2) &\subset \Vb\left(y,z^2-\frac{1-A_2}{A_2}\left(x+\frac{A_1}{A_2-1}\right)^2-A_5'\right).
\end{align*}

We translate $\R^3$ a distance of $-A_1/(A_2-1)$ in the $x$-direction. 
We set $m=1-A_2$ and note that $m/(m-1) = (A_2-1)/A_2$.
By the assumption of this case, $A_2\neq 0$, so $m\neq 1$.
Then, the above becomes
\begin{align*}
\widetilde{\gamma}_1(J_1) &\subset \Vb\left(z,y^2+mx^2-A_4'\right), \nonumber \\
\widetilde{\gamma}_2(J_2) &\subset \Vb\left(y,z^2+\frac{m}{m-1}x^2-A_5'\right).
\end{align*}

We set $k=m/(m-1)$ and note that $m=k/(k-1)$.
This leads to a symmetry between  $\widetilde{\gamma}_1(J_1)$ and $\widetilde{\gamma}_2(J_2)$: To switch between the two curves, we switch $m$ and $k$, and the $y$ and $z$ axes.
If $m<0$ then $k>0$.
Thus, by possibly switching $\widetilde{\gamma}_1(J_1)$ and $\widetilde{\gamma}_2(J_2)$, we may assume that $m>0$.
This implies that $A_4'> 0$, since otherwise $\widetilde{\gamma}_1(J_1)$ consists of at most one point.
When $A_5'\neq 0$, the above is a matching pair, so we are in Case \ref{ca:Matching}. 

It remains to consider the case where $A_5' = 0$. 
We have that $0<m<1$, since otherwise $\widetilde{\gamma}_2(J_2)$ consists of at most one point.
We rewrite the above as
\[ x = \pm z \cdot \sqrt{\frac{1-m}{m}}.\]
We conclude that  $\widetilde{\gamma}_2(J_2)$ is contained in the union of two lines.
By the definition of $\widetilde{\gamma}_2(J_2)$, it is a segment of one of these two lines $\ell$.

Setting the slope of $\ell$ as $\mu = \pm \sqrt{\frac{1-m}{m}}$, we note that $1/(\mu^2+1)=m$.
We may thus apply Lemma \ref{le:EllipseInCylinder} with $\ell$ and the ellipse that contains $\widetilde{\gamma}_1(J_1)$. The lemma states that the ellipse is contained in a cylinder centered around $\ell$.
This is Case \ref{ca:LineEllCyl}.

\parag{\textbf{Case (b).}}
We now assume that we are in case (b). 
Then there exist $A_4,A_5\in \R$ such that \eqref{eq:SquareIntegral} becomes 
\begin{align*}
f(s)^2 &= 2A_1\ln |s+A_2|+2A_3s-s^2+A_4,  \\[2mm]
g(t)^2 &= 2A_1 \ln |t-A_3|-2A_2t-t^2+A_5.
\end{align*}
Repeating the analysis of Case (a) leads to\footnote{We abuse the notation $\Vb(\cdot)$ by placing a function that is not a polynomial in it. }
\begin{align*}
\widetilde{\gamma}_1(J_1) &\subset \Vb\left(z,x^2+y^2-2A_1\ln |x+A_2|-2A_3x-A_4\right), \nonumber \\
\widetilde{\gamma}_2(J_2) &\subset \Vb\left(y,x^2+z^2-2A_1 \ln |x-A_3|+2A_2x-A_5\right).
\end{align*}

After rearranging, we get that there exist $A_4',A_5' \subset \R$, such that
\begin{align*}
\widetilde{\gamma}_1(J_1) &\subset \Vb\left(z,(x-A_3)^2+y^2-2A_1\ln |x+A_2|-A'_4\right), \nonumber \\
\widetilde{\gamma}_2(J_2) &\subset \Vb\left(y,(x+A_2)^2+z^2-2A_1 \ln |x-A_3|-A'_5\right).
\end{align*}

The above sets are matching log-circles. Indeed, consider the sets after translating $\R^3$ a distance of $A_2$ in the $x$-direction.
This is Case \ref{ca:LogCircles}.
\end{proof}

\section{Few distances between conics}\label{sec:FewDistancesBetweenConics}

In this section, we prove Theorem \ref{th:FewDistancesBetweenConics}. 
This theorem is a corollary of the results of Section \ref{sec:Configurations} and of following lemma.

\begin{lemma} \label{le:EightConicCases}
In the following, all curves are in $\R^3$. \\[2mm]
(a) Two circles that are neither aligned nor perpendicular span many distances. \\[2mm]
(b) A circle and hyperbola span many distances. \\[2mm]
(c) Two parabolas that are not CPO span many distances.\\[2mm]
(d) A parabola and an ellipse span many distances. \\[2mm]
(e) A parabola and a hyperbola span many distances. \\[2mm]
(f) Two hyperbolas span many distances. \\[2mm]
(g) A hyperbola and an ellipse that do not match span many distances. \\[2mm]
(h) Two ellipses that do not match and are not aligned circles span many distances.
\end{lemma}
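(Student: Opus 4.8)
Part (a) is precisely Theorem~\ref{th:ShefferMathialagan}(b), so no new argument is needed there; we sketch the common treatment of parts (b)--(h). Fix such a case, a pair of conics $C_1,C_2$ satisfying the hypotheses, and point sets $\pts_1\subset C_1$, $\pts_2\subset C_2$ of sizes $m$ and $n$. By Lemma~\ref{le:SmoothParametrization} there are arcs $A_1\subset C_1$ and $A_2\subset C_2$ carrying $\Theta(m)$ and $\Theta(n)$ of these points, and we take their parameterizations $\gamma_1,\gamma_2$ to be restrictions of the standard parameterizations of the two conics (angular for circles and ellipses, $u\mapsto(u,u^2)$ for parabolas, the usual rational or hyperbolic parameterization for hyperbolas), after a rigid motion placing $C_1$ in a canonical position. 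By Lemma~\ref{le:SpecialFormOrExpands}, either $(A_1,A_2)$ spans many distances, in which case $D(\pts_1,\pts_2)\ge D(\pts_1\cap A_1,\pts_2\cap A_2)=\Omega(\min\{m^{2/3}n^{2/3},m^2,n^2\})$ and we are done, or the distance function $\rho(s,t)$ of $\gamma_1,\gamma_2$ is of a special form. The task is to rule out the second alternative except when $(C_1,C_2)$ is one of the configurations excluded in the statement.

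Two sub-cases are immediate. If the planes of $C_1$ and $C_2$ are parallel or equal, then Lemma~\ref{le:ParallelPlanes} and Theorem~\ref{th:PachDeZeeuw} already yield many distances: the only configuration among (b)--(h) that could span few distances in a parallel or coincident pair of planes is a pair of aligned circles, which is excluded in (h) and is impossible in (b)--(g), since a parabola or hyperbola is not a circle. So we may assume the planes meet transversally in a line $\ell_0$. Using rotations, translations, and a single uniform scaling of $\R^3$ (none of which affects distances, and hence none of which affects the special-form property), we fix $C_1$ by a canonical equation in the $xy$-plane with $\ell_0$ along the $x$-axis, leaving a bounded number of real parameters for the inclination of the second plane, the position of $C_2$ inside it, and the shape of $C_2$ (an eccentricity, except for parabolas). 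Substituting these parameterizations into \eqref{eq:DistanceFunc} gives $\rho(s,t)$ as an explicit real-analytic function of $s,t$ and these parameters.

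Now we apply the derivative test. First, $\rho_s\not\equiv 0$ and $\rho_t\not\equiv 0$: since $\rho$ is real-analytic, $\rho_s\equiv 0$ on any open set would force $\rho_s\equiv 0$ on its whole connected domain, and then Lemma~\ref{le:DependsOnST} would place one arc on a circle and the other on that circle's axis; but the axis is a line and a non-degenerate conic contains no line segment, a contradiction, and symmetrically for $\rho_t$. Hence Lemma~\ref{le:DerivativeTest} applies: if $\rho$ were of a special form, then
\[
\frac{\partial^2(\ln|\rho_s/\rho_t|)}{\partial s\,\partial t}\equiv 0
\]
on some open neighborhood, and by analyticity this expression would vanish identically in $s$ and $t$. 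For each of (b)--(h) we compute this expression symbolically with Wolfram Mathematica (see Appendices~\ref{app:Wolfram} and \ref{app:Wolfram2}) and check that it is the zero function only when the parameters describe one of the excluded configurations: CPO parabolas in (c); matching curves in (g), and matching curves or aligned circles in (h); and \emph{no} parameter value at all in (b), (d), (e), (f), consistent with the fact that a circle, parabola, or hyperbola can never be part of a matching pair, and two hyperbolas (or a parabola and an ellipse, etc.) are never CPO, perpendicular, or aligned. Since $(C_1,C_2)$ is assumed not to be an excluded configuration, $\rho(s,t)$ is not of a special form, so the first alternative of Lemma~\ref{le:SpecialFormOrExpands} holds and $(C_1,C_2)$ spans many distances.

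The main obstacle is this last step: the computations carry several parameters, and one must verify not just generic non-vanishing of the derivative-test expression but that its \emph{exact} vanishing locus equals the named exceptional family in (c), (g), (h) and is empty in (b), (d), (e), (f). Keeping these computations tractable --- by exploiting the residual symmetry left after fixing $C_1$ and $\ell_0$, and by separately examining the coefficients of the various monomials and transcendental terms that appear in the numerator of the resulting rational expression --- is where the bulk of the work lies, and this is deferred to the appendices.
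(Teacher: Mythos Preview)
Your proposal is correct and follows essentially the same route as the paper: invoke Lemma~\ref{le:SpecialFormOrExpands} to reduce to showing $\rho$ is not of a special form, use Lemma~\ref{le:DependsOnST} to rule out $\rho_s\equiv 0$ or $\rho_t\equiv 0$, apply the derivative test (Lemma~\ref{le:DerivativeTest}), and then verify by symbolic computation that the resulting expression can vanish identically only for the excluded configurations.

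Two small points of comparison. First, your normalization ``fix $C_1$ by a canonical equation in the $xy$-plane with $\ell_0$ along the $x$-axis'' is not generally achievable: once $C_1$ is in canonical position its axes are pinned, and the line $\ell_0$ where the second plane meets the $xy$-plane can be \emph{any} line there. The paper instead keeps $C_1$ canonical and lets $C_2$ be fully general, parameterized by a center $(p,q,r)$ and two vectors $\mathbf{v}_1,\mathbf{v}_2\in\R^3$; it also does not peel off the parallel-planes case in advance but absorbs it into the computation (via the observation $z_1\neq 0$). Your alternative normalization would still work, but $C_1$ would then carry an extra rotation parameter, so the Mathematica computations would look different from those in Appendices~\ref{app:Wolfram} and~\ref{app:Wolfram2}. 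Second, the paper notes explicitly that the derivative-test expression is a \emph{rational} function of $s,t$, so vanishing on an open set forces the numerator polynomial $\rho_N(s,t)$ to be identically zero; your appeal to real-analyticity achieves the same conclusion.
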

\begin{proof}
Part (a) is proved in \cite{MS20}, so it remains to consider cases (b)--(h). 
Since the different proofs are almost identical, we present them together. 
We denote the two curves as $C_1$ and $C_2$.
We first perform a rotation, translation, and uniform scaling of $\R^3$ to obtain the following.
\begin{itemize}[noitemsep,topsep=1pt]
    \item If $C_1$ is a parabola then $C_1 = \vb(y-x^2,z)$.
    \item If $C_1$ is a hyperbola then $C_1 = \vb(x^2-\frac{y^2}{c^2}-1,z)$.
    \item If $C_1$ is an ellipse then $C_1 = \vb(x^2+\frac{y^2}{c^2}-1,z)$.
\end{itemize}
Such transformations do not change the number of distances between two sets of points. 

After the above transformations, we have a nice parameterization $\gamma_1(s)$ of $C_1$, possibly excluding a single point: 
\begin{itemize}[noitemsep,topsep=1pt]
    \item If $C_1$ is a parabola then $\gamma_1(s) = (s,s^2,0)$.
    \item If $C_1$ is a hyperbola then $\gamma_1(s) = \left(\frac{s^2+1}{2s},c\cdot\frac{1-s^2}{2s},0\right)$.
    \item If $C_1$ is an ellipse then $\gamma_1(s) = \left(\frac{1-s^2}{1+s^2},c\cdot\frac{2s}{1+s^2},0\right)$.
\end{itemize}

After fixing $C_1$, we cannot transform $\R^3$ to also give $C_2$ a simple parameterization. 
To parameterize $C_2$, we use a point $(p,q,r)\in \R^3$ and orthogonal vectors $\mathbf{v}_1,\mathbf{v}_2\in \R^3$: 
\begin{itemize}[noitemsep,topsep=1pt]
    \item If $C_2$ is a parabola then $\gamma_2(t) = (p,q,r)+t\cdot  \mathbf{v}_1+t^2\cdot  \mathbf{v}_2$.
    \item If $C_2$ is a hyperbola then $\gamma_2(t) = (p,q,r)+\frac{t^2+1}{2t}\cdot \mathbf{v}_1+\frac{t^2-1}{2t}\cdot \mathbf{v}_2$.
    \item If $C_2$ is an ellipse then $\gamma_2(t) = (p,q,r)+\frac{t^2-1}{t^2+1}\mathbf{v}_1+\frac{2t}{t^2+1} \mathbf{v}_2$.
\end{itemize}
As before, these parameterizations might exclude a single point of $C_2$.

Let $\rho(s,t)$ be the distance function of $\gamma_1(s)$ and $\gamma_2(t)$, as defined in \eqref{eq:DistanceFunc}.
By Lemma \ref{le:SpecialFormOrExpands}, either $\rho(s,t)$ is of a special form or $(C_1,C_2)$ spans many distances. 
We assume that we are in the case where $\rho(s,t)$ is of a special form.
If $\rho_s$ or $\rho_t$ is identically zero in an open neighborhood, then Lemma \ref{le:DependsOnST} implies that $C_1$ or $C_2$ is a line. 
Since this contradicts the assumption of the theorem, $\rho_s$ and $\rho_t$ are not identically zero in any open neighborhood.

By the above, we may apply Lemma \ref{le:DerivativeTest} with $\rho(s,t)$. 
The lemma implies that there exists an open neighborhood in $\R^2$ in which 
\begin{equation} \label{eq:SpecialFormAgain}
\frac{\partial^2(\ln|\rho_s/\rho_t|)}{\partial s \partial t}\equiv 0.
\end{equation}
A careful reader might complain that Lemma \ref{le:DerivativeTest} is defined for pararmeterizations with the domain $(0,1)$,
while the above parameterizations assume $s,t\in \R$.
It is easy to address this  technicality.
For example, we can obtain half of the curve by composing the above parameterization with $h(r)=1/r$, where $r\in (0,1)$.
The other half is obtained by composing the parameterization with $h(r)=-1/r$.
By observing  \eqref{eq:SpecialForm}, we note that having a special form is invariant under replacing $s$ with $\pm 1/s$ and $t$ with $\pm 1/t$. 

The left side of \eqref{eq:SpecialFormAgain} is a rational function. 
To see that, we set $h(s, t) = \rho_s/\rho_t$. 
Since $\rho(s, t)$ is a rational function in $s$ and $t$, so is $h(s, t)$.
For each point $(s, t)$ that satisifies $\rho_s(s,t)\neq 0$ and $\rho_t(s,t)\neq 0$, there exists an open neighbourhood where $|h(s, t)| = c \cdot h(s, t)$, with $c\in \{-1,1\}$. In either case, 
\[ \frac{\partial}{\partial t} \log |h(s, t)| = \frac{1}{c\cdot h(s, t)} \cdot \frac{\partial}{\partial t} (c \cdot h(s, t)) = \frac{h_t(s, t)}{h(s, t)}. \]

We write $\mathbf{v}_1 = (x_1,y_1,z_1)$ and $\mathbf{v}_2 = (x_2,y_2,z_2)$.
Then the left side of  \eqref{eq:SpecialFormAgain} is a rational function in $s$ and $t$ whose coefficients are rational functions in $c,p,q,r,x_1,x_2,y_1,y_2,z_1,z_2$. 
We denote the numerator of this rational function as $\rho_{N}(s,t)$.

By \eqref{eq:SpecialFormAgain}, the polynomial $\rho_{N}(s,t)$ is identically zero in an open neighborhood of $\R^2$. 
That is, all the coefficients of $\rho_{N}(s,t)$ are zero. 
We use Wolfram Mathematica to compute $\rho_{N}(s,t)$.
We then show that, when we are not in one of the special configurations that span few distances, it is impossible for all the coefficients of $\rho_{N}(s,t)$ to be zero simultaneously.
This contradiction implies that $(C_1,C_2)$ spans many distances, as asserted. 
For the Mathematica computations, see Appendices \ref{app:Wolfram} and \ref{app:Wolfram2}.
\end{proof}

We are now ready to prove Theorem \ref{th:FewDistancesBetweenConics}. 
We first recall the statement of this result.
\vspace{2mm}

\noindent {\bf Theorem \ref{th:FewDistancesBetweenConics}} (Few distances between conics)
\emph{Let $C_1,C_2\subset \R^3$ be non-degenerate conic sections. \\[2mm]
(a) Assume that $C_1$ and $C_2$ are either perpendicular circles, aligned circles, CPO parabolas, or matching curves. Then $(C_1,C_2)$ spans few distances. \\[2mm]
(b) If $(C_1,C_2)$ is not one of the configurations from part (a), then $(C_1,C_2)$ spans many distances.}

\begin{proof}
Part (a) is immediate by combining Lemmas \ref{le:ParallelPlanes}, \ref{le:AlignedCirclesFewDistances}, \ref{le:PerpendicularParabolasFewDistances}, and \ref{le:MatchingEllipseHyperbolaFewDistances}.
Part (b) is immediate from Lemma \ref{le:EightConicCases}.
\end{proof}

\appendix

\section{Mathematica computations: the case of two ellipses} \label{app:Wolfram}

In this appendix, we complete the proof of Lemma \ref{le:EightConicCases} for ellipses, by using Wolfram Mathematica. 
In particular, we assume that all coefficients of $\rho_N(s,t)$ are zero, and show that this can only happen when the ellipses match. 
The case of two ellipses is the most complicated one, by far. This case includes all ideas that are used in the other cases. 
In Appendix \ref{app:Wolfram2}, we see that the case of two hyperbolas is orders of magnitude simpler.

\parag{Final preparations.}
Before getting to the code, we further simplify the situation. 
Every ellipse centered at the origin in $\R^2$ can be defined as $\Vb(ax^2+bxy+cy^2-1)$ for some $a,b,c\in\R$. 
Conversely, $\Vb(ax^2+bxy+cy^2-1)$ is an ellipse when $a,c>0$ and $b^2-ac<0$.
An ellipse is \emph{standard} if it is centered at the origin and its axes are also  the axes of $\R^2$. 
Equivalently, an ellipse is standard if $b=0$ in its defining equation.

\begin{lemma} \label{le:OneVectorFromAxis}
(a) Consider an ellipse $E\subset \R^2$ centered at the origin. 
Let $p$ be the point with the smallest $y$-coordinate on $E$.
Then there exist $\mathbf{v}_1,\mathbf{v}_2\in \R^2$ such that $\mathbf{v}_1$ is on the $x$-axis and 
\begin{equation*} E\setminus \{p\}= \left\{\frac{1-s^2}{1+s^2}\cdot \mathbf{v}_1+\frac{2s}{1+s^2}\cdot \mathbf{v}_2\ :\ s\in \R \right\}.
\end{equation*}
(b) Let $\Pi\subset\R^3$ be a plane that contains the origin and let $E\subset \R^3$ be an ellipse centered at the origin. 
Then there exist $p\in E$, $\mathbf{v}_1\in \Pi$, and $\mathbf{v}_2\in \R^3$ such that 
\begin{equation*} E\setminus \{p\}= \left\{\frac{1-s^2}{1+s^2}\cdot \mathbf{v}_1+\frac{2s}{1+s^2}\cdot \mathbf{v}_2\ :\ s\in \R \right\}.
\end{equation*}
\end{lemma}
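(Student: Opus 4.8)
The plan is to reduce everything to the classical rational parameterization of the unit circle. First I would record that $\psi(s)=\bigl(\tfrac{1-s^2}{1+s^2},\tfrac{2s}{1+s^2}\bigr)$ is a smooth bijection from $\R$ onto $S^1\setminus\{(-1,0)\}$: one checks $\|\psi(s)\|=1$, recovers $s=v/(1+u)$ from a point $(u,v)\in S^1$ with $u\neq -1$, and notes that $(-1,0)=\lim_{s\to\pm\infty}\psi(s)$ is the only omitted point. Consequently, for any $\mathbf v_1,\mathbf v_2$ (in $\R^2$ or $\R^3$), the set $\bigl\{\tfrac{1-s^2}{1+s^2}\mathbf v_1+\tfrac{2s}{1+s^2}\mathbf v_2:s\in\R\bigr\}$ is exactly $\{u\mathbf v_1+v\mathbf v_2:(u,v)\in S^1\}$ with the single point $-\mathbf v_1$ deleted, and this last set is a nondegenerate ellipse whenever $\mathbf v_1,\mathbf v_2$ are linearly independent.

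Next I would use the standard fact that an origin-centered ellipse $E$ equals $\{\cos\theta\,\mathbf w_1+\sin\theta\,\mathbf w_2:\theta\in\R\}$ for some linearly independent $\mathbf w_1,\mathbf w_2$ — i.e.\ $E=T(S^1)$ for an injective linear $T$ whose columns are $\mathbf w_1,\mathbf w_2$. Composing $T$ with $\psi$ gives $E\setminus\{-\mathbf w_1\}=\bigl\{\tfrac{1-s^2}{1+s^2}\mathbf w_1+\tfrac{2s}{1+s^2}\mathbf w_2:s\in\R\bigr\}$. The key flexibility is that replacing $(\mathbf w_1,\mathbf w_2)$ by the rotated pair of conjugate semidiameters $(\cos\alpha\,\mathbf w_1+\sin\alpha\,\mathbf w_2,\,-\sin\alpha\,\mathbf w_1+\cos\alpha\,\mathbf w_2)$ again parameterizes $E$, and as $\alpha$ varies the first vector sweeps out all of $E$. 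Hence the omitted point $-\mathbf v_1$ may be prescribed to be any point of $E$ we wish, with $\mathbf v_2$ the matching conjugate semidiameter.

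To finish part (a), I would observe that the $x$-axis is a line through the origin, which is the center of $E$, so it meets $E$ in exactly two antipodal points; take $p$ to be the one named in the statement, set $\mathbf v_1=-p$ (on the $x$-axis) and let $\mathbf v_2$ be a conjugate semidiameter, and apply the previous two steps. For part (b), the plane $\Pi$ and the plane $\Pi_E$ containing $E$ both pass through the origin, so they share at least a line through the origin; this line passes through the center of $E$, hence crosses $E$; pick such a crossing point, call it $-\mathbf v_1$ with $\mathbf v_1\in\Pi$, put $p=-\mathbf v_1\in E$, let $\mathbf v_2$ be a conjugate semidiameter, and apply the previous steps. The only genuine content is the bookkeeping in the first step — that precisely one point, namely $-\mathbf v_1$, is omitted — together with the elementary observation that a line (resp.\ a plane) through the origin must meet an origin-centered ellipse, which is what lets us force $\mathbf v_1$ onto the $x$-axis in (a) and into $\Pi$ in (b); I expect the main obstacle to be expository rather than mathematical, namely stating cleanly the conjugate-semidiameter reparameterizations so that the freedom in choosing the omitted point is transparent.
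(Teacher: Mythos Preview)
Your approach is correct and genuinely different from the paper's. The paper reduces to the axis-aligned case by the explicit shear $T(x,y)=(x+by/2a,\,y)$, which turns $E=\mathbf V(ax^2+bxy+cy^2-1)$ into a standard ellipse $E'$; it then takes axis-parallel $\mathbf v_1',\mathbf v_2'$ for $E'$ and pulls them back by $T^{-1}$, noting that $T^{-1}$ fixes the $x$-axis pointwise so $\mathbf v_1=T^{-1}(\mathbf v_1')$ remains on the $x$-axis. Part~(b) is then reduced to part~(a) exactly as you do, by choosing the line $\Pi\cap\Pi_E$ as the $x$-axis inside $\Pi_E$. Your route --- write $E$ as a linear image of $S^1$ and use the $SO(2)$-family of conjugate semidiameter pairs to steer $\mathbf v_1$ onto any prescribed line through the center --- is more conceptual and makes the freedom in choosing the omitted point transparent, whereas the shear argument is a direct one-line computation tailored to the coordinate form of the conic.

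One small slip in your treatment of~(a): you say the $x$-axis meets $E$ in two antipodal points and then ``take $p$ to be the one named in the statement,'' setting $\mathbf v_1=-p$. But the statement fixes $p$ as the point of \emph{smallest $y$-coordinate}, which for a generic origin-centered ellipse does not lie on the $x$-axis; since your construction forces the omitted point $-\mathbf v_1$ onto the $x$-axis, it cannot simultaneously be the smallest-$y$ point. (The paper's own proof has the same tension: if the omitted point of $E'$ is the bottom point $(0,-\beta)$ then $\mathbf v_1'=(0,\beta)$ is on the $y$-axis, and $T^{-1}(0,\beta)$ is generally off the $x$-axis.) For the application in the appendix this is harmless --- only the conclusion that $\mathbf v_1$ can be placed on a given line through the center is used, and the identity of $p$ never enters --- but you should flag that what you actually establish is ``some $p\in E$'' rather than the specific $p$ in the statement.
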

\begin{proof}
(a) We write $E = \Vb(ax^2+bxy+cy^2-1)$ and consider the transformation $T(x,y) = (x+by/2a,y)$.
A point $(x,y)$ is on $T(E)$ if and only if 
\begin{align*}
1
&=a\left(x-\frac{b}{2a}y\right)^2+b\left(x-\frac{b}{2a}y\right)y+cy^2\\
&=a\left(x^2-\frac{b}{a}xy+\frac{b^2}{4a^2}y^2\right)+bxy-\frac{b^2}{2a}y^2+cy^2=ax^2+\left(-\frac{b^2}{4a}+c\right) y^2.
\end{align*}

Since $b^2-ac<0$, the coefficient of $y^2$ is positive, so $T(E)$ is a standard ellipse.
We set $E' = T(E)$ and let $p'$ be the point with the smallest $y$-coordinate on $E'$.
Since $E'$ is standard, there exist axis-parallel vectors $\mathbf{v}'_1,\mathbf{v}'_2\in \R^2$ such that 
\[ E'\setminus \{p'\}= \left\{\frac{1-s^2}{1+s^2}\cdot \mathbf{v}'_1+\frac{2s}{1+s^2}\cdot \mathbf{v}'_2\ :\ s\in \R \right\}. \]

We note that $T(\cdot)$ does not change $y$-coordinates. 
Since $E' = T(E)$, we obtain the assertion of the lemma by setting $\mathbf{v}_1=T^{-1}(\mathbf{v}_1')$ and $\mathbf{v}_2=T^{-1}(\mathbf{v}_2')$. 

(b) Let $\Pi'$ be the plane that contains $C$. 
We may assume that $\Pi \neq \Pi'$, since otherwise we are done.
Since both planes contain the origin, their intersection is a line $\ell$.
We move to work inside of $\Pi'$ and change the axes so that $\ell$ is the $x$-axis. 
Then, part (a) of the current lemma completes the proof of part (b).
\end{proof}

We follow the parameterizations from the proof of Lemma \ref{le:EightConicCases} in Section \ref{sec:FewDistancesBetweenConics}.
However, instead of asking the vectors $\vb_1,\vb_2$ to be orthogonal, we apply Lemma \ref{le:OneVectorFromAxis}(b) to have the third coordinate of $\vb_2$ be 0. (That is, we apply Lemma \ref{le:OneVectorFromAxis}(b) after a translation taking the center of $C_2$ to be the origin.) 
Then, there exist $c>0$ and vectors $\bv_1 = (x_1,y_1,z_1),\bv_2 = (x_2,y_2,0)$ such that 
\begin{align}
\gamma_1(s) &= \left(\frac{1-s^2}{1+s^2},c\cdot\frac{2s}{1+s^2},0\right), \nonumber \\
\gamma_2(t) &= (p,q,r)+\frac{t^2-1}{t^2+1}\mathbf{v}_1+\frac{2t}{t^2+1} \mathbf{v}_2. \label{eq:AppendixParam}
\end{align}

If both $C_1$ and $C_2$ are circles, then we are done by Theorem \ref{th:ShefferMathialagan}.
Thus, without loss of generality, we may assume that $C_1$ is not a circle. 
In other words, $c\neq 1$.
By Lemma \ref{le:ParallelPlanes}, we may assume that $C_2$ is not contained in a plane parallel to the $xy$-plane.
This in turn implies that $z_1\neq 0$.

Let $\rho(s,t)$ be the distance function of $\gamma_1(s)$ and $\gamma_2(t)$, as defined in \eqref{eq:DistanceFunc}.
As in Section \ref{sec:FewDistancesBetweenConics}, we set $\rho_{N}(s,t)$ to be the numerator of $\frac{\partial^2(\ln|\rho_s/\rho_t|)}{\partial s \partial t}$.
We recall that $\rho_{N}(s,t)$ is a polynomial in $s$ and $t$, where the coefficients of the monomials depend on $c,x_1,y_1,z_1,x_2,y_2,p,q,r$.
To complete the proof, we assume that all these coefficients equal zero and show that this only happens when $C_1$ and $C_2$ are matching ellipses. 
We denote the coefficient of $s^at^b$ in $\rho_{N}(s,t)$ as $\coeff[s^at^b]$.

\parag{The case analysis.}
We are now ready to present the computational part of our analysis, 
which is partitioned into cases.
We start with the most general case and gradually branch into more specialized cases.
We include figures with the code of the first four Mathematica calculations.
Since the code is very similar for all cases, we believe that four figures suffice.
For readability, in the code we denote $\gamma_1$ and $\gamma_2$ as $c1$ and $c2$, respectively. 
The function \emph{dist[s,t]} represents $\rho(s,t)$. 
The variable \emph{RhoTest} represents $\rho_{N}(s,t)$.

The case where $p=q=r=0$ is addressed separately below, under the heading ``The case of two ellipses centered at the origin.'' 
We first assume that $C_2$ is not centered at the origin.
Since matching ellipses have the same center, we cannot have matching ellipses in this case.
Figure \ref{fi:RoadmapA} may be helpful while reading the case analysis.

\begin{figure}[htp]     \centering
    \includegraphics[width=1.1\textwidth]{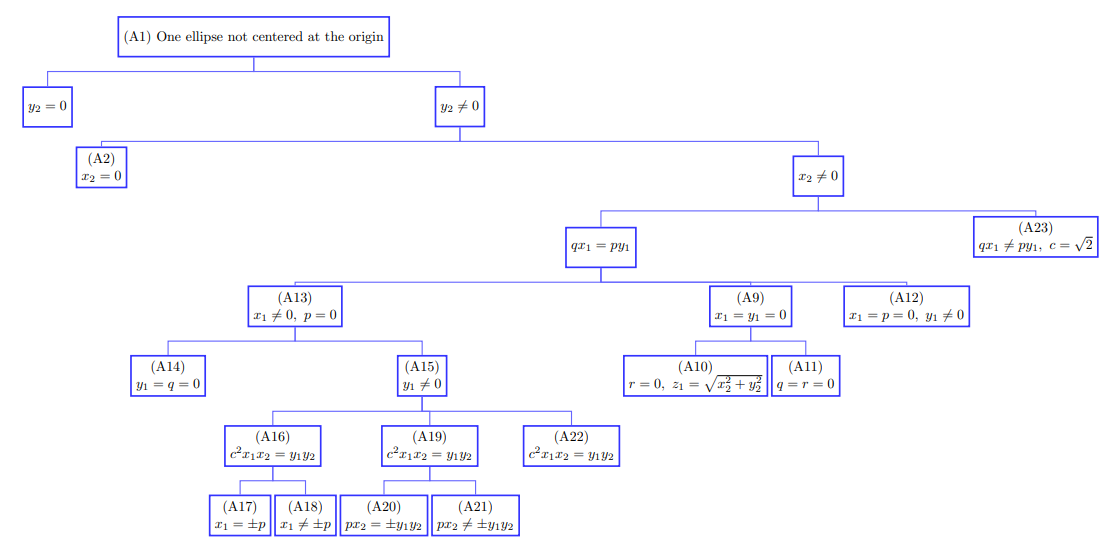}
    \caption{A roadmap of the case analysis when the second ellipse is not centered at the origin.}\label{fi:RoadmapA}
\end{figure}

\begin{enumerate}[label=(A\arabic*)] 
\item \label{ca:general}{\bf The general case.} For our first step, we compute 
\[ \coeff[st]-\coeff[s^9t] = -192 c^2 (-2+c^2)x_2(qx_1-py_1)y_2. \] 
See Figure \ref{fi:generalCase}.
Since each coefficient of $\rho_{N}(s,t)$ equals zero, so does the difference of two coefficients.
Since $c>0$, we have that $x_2=0$, or $y_2=0$, or $c=\sqrt{2}$, or $qx_1=py_1$. 

 \begin{figure}[h]
    \centering
    \includegraphics[width=0.8\textwidth]{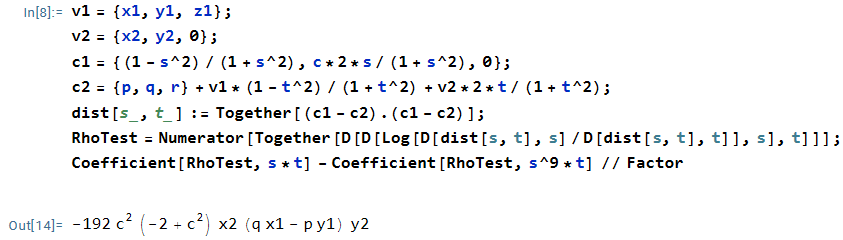}
   \caption{The Mathematica code of Case \ref{ca:general}. }
\label{fi:generalCase}

\end{figure}

If $y_2=0$ then we swap the $x$ and $y$ axes, to again reach the case where $x_2=0$. 
In this case $y_2\neq 0$, since  $v_2\neq (0,0,0)$.
Case \ref{ca:x2=0y2neq0} assumes that $x_2=0$ and $y_2\neq 0$. 
Case \ref{ca:x1x2y2neq0y1q=0} assumes that $x_2,y_2\neq 0$, $qx_1=py_1$, and $x_1\neq 0$. 
Case \ref{ca:x1y1=0x2y2neq0} assumes that $x_2,y_2\neq 0$, $qx_1=py_1$, and $x_1=y_1=0$.
Case \ref{ca:x1=p=0y1x2y2neq0} assumes that $x_2,y_2\neq 0$, $x_1=0$, $y_1\neq 0$, and $p=0$.
Case \ref{ca:c=sqrt2} assumes that $x_2, y_2\neq 0$, $qx_1 \neq py_1$, and $c=\sqrt{2}$. 
These cases cover all possible options.
 
\item \label{ca:x2=0y2neq0} {\bf\boldmath The case where $x_2=0$ and $y_2\neq 0$.} As shown in Figure \ref{fi:x2=0y2neq0}, we compute \[ \coeff[s^3t]-\coeff[s^5t]/2=384(-1+c)c^2(1+c)x_1y_{2}^2. \] 
Since each coefficient is equal to 0, so does this difference.
Since $c>0$ and $c\neq 1$, we conclude that $x_1=0$.

 \begin{figure}[h]
    \centering
    \includegraphics[width=0.8\textwidth]{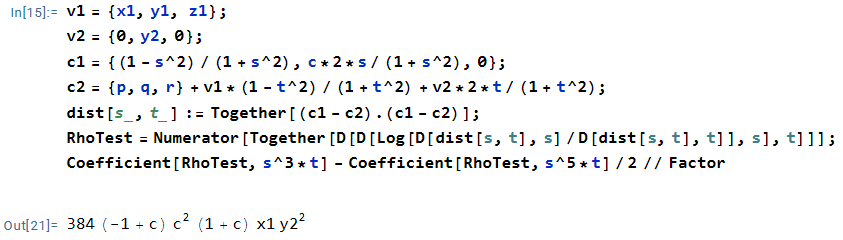}
   \caption{The Mathematica code of the first part of Case \ref{ca:x2=0y2neq0}. }
\label{fi:x2=0y2neq0}
\end{figure}

Next, as shown in Figure \ref{fi:x2=0y2neq0part2}, we compute 
\[ \coeff[s^3t]+\coeff[s^3t^9] =256c^2py_{2}^2(3qy_1+rz_1).\] 
Since $c,y_2\neq 0$, either $p=0$ or $3qy_1+rz_1=0$. 
Case \ref{ca:x1=x2=p=0yneq0} assumes that $p=0$.
Case \ref{x1=x2=0y1pneq0r=-3qy1/z1} assumes that $p\neq 0$ and $3qy_1+rz_1=0$. 
Since $z_1\neq 0$, we may rewrite $3qy_1+rz_1=0$ as $r=-3qy_1/z_1$.

 \begin{figure}[h]
    \centering
    \includegraphics[width=0.8\textwidth]{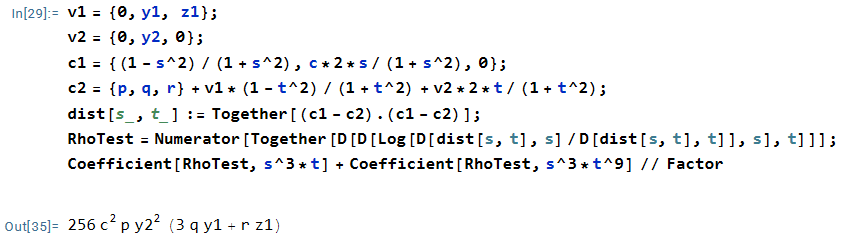}
   \caption{The Mathematica code of the second  part of Case \ref{ca:x2=0y2neq0}. }
\label{fi:x2=0y2neq0part2}
\end{figure}

\item \label{ca:x1=x2=p=0yneq0} {\bf\boldmath The case where $x_1=x_2=p=0$ and $y_2\neq 0$.}
As shown in Figure \ref{fi:x1=x2=p=0yneq0}, we compute
\begin{align*} 10(\coeff[s]-\coeff[st^{10}])+\coeff[st^6]+\coeff[s^5t^4] &=-384(-1+c)c^2(1+c)qy_2z_{1}^2, \\
\coeff[s]+\coeff[st^{10}]&=-32(-1+c)c^2(1+c)y_2z_1(qr+y_1z_1).
\end{align*}
Since $c> 0$, $c\neq 1$, and $y_2,z_1\neq 0$, the first expression above implies that $q=0$. 
The second expression leads to $qr+y_1z_1=0$. 
Since $q=0$ and $z_1\neq 0$, we conclude that $y_1=0$.

 \begin{figure}[h]
    \centering
    \includegraphics[width=\textwidth]{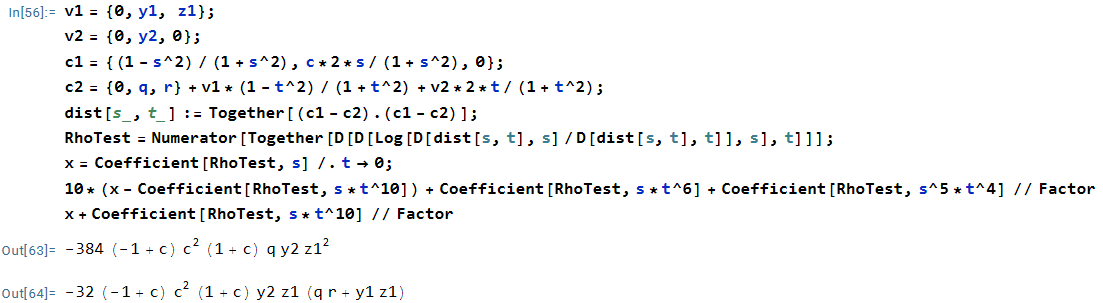}
   \caption{The Mathematica code of Case \ref{ca:x1=x2=p=0yneq0}. }
\label{fi:x1=x2=p=0yneq0}
\end{figure}

Next, we compute $\coeff[st^3]=-8(-1+c)c^2(1+c)ry_{2}^2z_1$. 
Since $c>0$, $c>1$, and $y_2\neq 0$, we get that $r=0$.
This contradicts the assumption that $C_2$ is not centered at the origin, so the current case cannot occur.

\ignore{ \begin{figure}[h]
    \centering
    \includegraphics[width=\textwidth]{MathematicaScreenshots/Snip5.PNG}
    \label{}
\end{figure}    }

\ignore{\begin{figure}[h]
    \centering
    \includegraphics[width=\textwidth]{MathematicaScreenshots/Snip6.PNG}
    \label{}
\end{figure}}

\item \label{x1=x2=0y1pneq0r=-3qy1/z1} {\bf\boldmath The case where $x_1=x_2=0$, $y_2,p\neq 0$, and $r=-3qy_1/z_1$.}
In this case, we compute $\coeff[s^3t^5]+\coeff[s^7t^5]=-4096c^2pqy_{1}^3$. 
Since $c,p\neq 0$, either $q=0$ or $y_1=0$.
Case \ref{ca:x1=x2=q=r=0y2pneq0} assumes that $q=0$. 
Case \ref{ca:x1=x2=y1=r=0y2pqneq0} assumes that $q\neq 0$ and $y_1=0$. 
In both cases, the assumption $r=-3qy_1/z_1$ becomes $r=0$.

\ignore{ \begin{figure}[h]
    \centering
    \includegraphics[width=\textwidth]{MathematicaScreenshots/Snip7.PNG}
    \label{}
\end{figure}  }

\item \label{ca:x1=x2=q=r=0y2pneq0} {\bf\boldmath The case where $x_1=x_2=q=r=0$ and $y_2,p\neq 0$.}
In this case, we compute $\coeff[s^5]=-32c^2py_1y_2(2y_2-z_1)(2y_2+z_1)$. 
Since $c,p,y_2\neq 0$, we get that $y_1=0$ or $z_1=\pm 2y_2$.
Case \ref{ca:x1=x2=y1=q=r=0y2pneq0} assumes that $y_1=0$.
Case \ref{x1=x2=q=r=0y1y2pneq0-z1=2y2} assumes that $y_1\neq 0$ and $z_1=2y_2$.
If $z_1=-2y_2$, then we perform a reflection of $\R^3$ about the $xz$-plane. 
This leads to $z_1=2y_2$ and does not affect anything else.
Thus, Case \ref{x1=x2=q=r=0y1y2pneq0-z1=2y2} also covers $z_1=-2y_2$.

\ignore{
 \begin{figure}[h]
    \centering
    \includegraphics[width=\textwidth]{MathematicaScreenshots/Snip8.PNG}
    \label{}
\end{figure} }

\item \label{ca:x1=x2=y1=q=r=0y2pneq0} {\bf\boldmath The case where $x_1=x_2=y_1=q=r=0$ and $y_2,p\neq 0$.}
In this case, we compute $\coeff[s^3t]=-8c^2py_{2}^2(y_2-z_1)(y_2+z_1)$. 
Since $c,p,y_2\neq 0$, we get that $y_2=\pm z_1$. 
Without loss of generality, we assume that $y_2=z_1$.
If $y_2=-z_1$, we reflect $\R^3$ about the $xz$-plane. 

\ignore{
 \begin{figure}[h]
    \centering
    \includegraphics[width=\textwidth]{MathematicaScreenshots/Snip9.PNG}
    \label{}
\end{figure} }

We next compute $3\coeff[1]+\coeff[s^2]=6cpz_1$.
Since $c,p,z_1\neq 0$,
 this coefficient is nonzero. 
This contradiction implies that the current case cannot occur.

\ignore{
 \begin{figure}[h]
    \centering
    \includegraphics[width=\textwidth]{MathematicaScreenshots/Snip10.PNG}
    \label{}
\end{figure}}

\item \label{x1=x2=q=r=0y1y2pneq0-z1=2y2} {\bf\boldmath The case where $x_1=x_2=q=r=0$, $y_1,y_2,p\neq 0$, and $z_1=2y_2$.} 
We compute $\coeff[s^3t
]=384c^2py_{2}^2(y_{1}^2+y_{2}^2)$. 
Since $c,p,y_1,y_2\neq 0$, this coefficient cannot be zero.
This contradiction implies that the current case cannot occur.

\ignore{
 \begin{figure}[h]
    \centering
    \includegraphics[width=\textwidth]{MathematicaScreenshots/Snip11.PNG}
    \label{}
\end{figure} }

\item \label{ca:x1=x2=y1=r=0y2pqneq0} {\bf\boldmath The case where $x_1=x_2=y_1=r=0$ and $y_2,p,q\neq 0$.}
In this case, we compute  $\coeff[s^5t^5
]=256c^2py_{2}^2(y_2-z_1)(y_2+z_1)$.
Since this coefficient equals zero and $c,p,y_2\neq 0$, we have that $y_2=\pm z_1$. 
As before, it suffices to consider the case of $y_2=z_1$.
The other case is identical after a reflection across the $xz$-plane. 

\ignore{
 \begin{figure}[h]
    \centering
    \includegraphics[width=\textwidth]{MathematicaScreenshots/Snip12.PNG}
    \label{}
\end{figure} }

We next compute $3\coeff[1]+\coeff[s^2]=24cpz_1$. Since $c,p,z_1\neq 0$, the above expression is nonzero.
This contradiction implies that the current case cannot occur.

\ignore{
 \begin{figure}[h]
    \centering
    \includegraphics[width=\textwidth]{MathematicaScreenshots/Snip13.PNG}
    \label{}
\end{figure} }

\item \label{ca:x1y1=0x2y2neq0}{\bf\boldmath The case where $x_1=y_1=0$ and $x_2,y_2\neq 0$.}
We compute \begin{align*}
\coeff[st^5]-\coeff[s^9t^5]&=-512(-1+c)c^2(1+c)ry_{2}^2z_1, \\
\coeff[st^9]&=-96c^2qx_2y_2(x_{2}^2+y_{2}^2+rz_1-z_{1}^2).
\end{align*}
Since $c>0$, $c\neq 1$, and $y_2,z_1\neq 0$, the first computation above leads to $r=0$.
This leads to $\coeff[st^9]=-96c^2qx_2y_2(x_{2}^2+y_{2}^2-z_{1}^2)$.
Since this equals zero, $z_1^2 = x_2^2+y_2^2$ or $q=0$. 
We rewrite the former as $z_1 = \pm \sqrt{x_2^2+y_2^2}$.
As usual, we may assume that $z_1 = \sqrt{x_2^2+y_2^2}$, since the other option is symmetric after a reflection about the $xy$-plane. 
Case \ref{ca:x1=y1=r=0x2y2neq0z1Euclidean} assumes that $z_1 = \sqrt{x_2^2+y_2^2}$.
Case \ref{ca:x1=y1=r=q=0x2y2neq0} assumes that $q=0$.

 \ignore{
 \begin{figure}[h]
    \centering
    \includegraphics[width=\textwidth]{MathematicaScreenshots/Snip14.PNG}
    \label{}
\end{figure} }

\item \label{ca:x1=y1=r=0x2y2neq0z1Euclidean} {\bf\boldmath The case where $x_1=y_1=r=0$, $x_2,y_2\neq 0$, and $z_1 = \sqrt{x_{2}^2+y_{2}^2}$.}
We compute $\coeff[s^3]=32(-1+c)(1+c)x_2$. 
Since $c>0$, $c\neq 1$, and  $x_2\neq 0$, this coefficient is nonzero.  
This contradiction implies that the current case cannot occur.

\ignore{
 \begin{figure}[h]
    \centering
    \includegraphics[width=\textwidth]{MathematicaScreenshots/Snip15.PNG}
    \label{}
\end{figure} }

\item \label{ca:x1=y1=r=q=0x2y2neq0} {\bf\boldmath The case where $x_1=y_1=r=q=0$ and $x_2,y_2\neq 0$.}
We compute
\begin{align*}
\coeff[st^6]-\coeff[s^9]&=-8c^2(-1+c^2-p^2)x_2y_{2}^2, \\
\coeff[st^6]+\coeff[s^9]&=8c^2(-2+c^2)px_2y_{2}^2.
\end{align*}
Since $c,x_2,y_2\neq 0$, the first computation leads to $p^2=c^2-1$. 
Since $c\neq \pm 1$, we get that $p\neq 0$. 
Since the second computation equals zero, we obtain that $c=\sqrt{2}$. 
Plugging this into $p^2=c^2-1$ gives $p=\pm 1$. 
If $p=-1$ then we reflect $\R^3$ about the $yz$-plane. 
We may thus assume that $p=1$, without loss of generality. 

\ignore{
 \begin{figure}[h]
    \centering
    \includegraphics[width=\textwidth]{MathematicaScreenshots/Snip16.PNG}
    \label{}
\end{figure}
}

We next compute $\coeff[s^5]=16x_2y_{2}^2$.
Since $x_2, y_2\neq 0$, this coefficient is nonzero.
This contradiction implies that the current case cannot occur.

\ignore{
 \begin{figure}[h]
    \centering
    \includegraphics[width=\textwidth]{MathematicaScreenshots/Snip17.PNG}
    \label{}
\end{figure} }

\item \label{ca:x1=p=0y1x2y2neq0} {\bf\boldmath The case where $x_1=p=0$ and $y_1,x_2,y_2\neq 0$.} 
We compute
\[ \coeff[st]-\coeff[st^9]+(\coeff[st^3]-\coeff[st^7])/2=-384c^2rx_2y_1y_2z_1. \] 
Since $c,y_1,z_1,x_2,y_2\neq 0$, we get that $r=0$.
Since we assume that $C_2$ is not centered at the origin, this implies that $q\neq 0$.

\ignore{
 \begin{figure}[h]
    \centering
    \includegraphics[width=\textwidth]{MathematicaScreenshots/Snip18.PNG}
    \label{}
\end{figure} }

We next compute 
\[ \coeff[st^9]-\coeff[s^9t]=-192c^2qx_2y_2(x_{2}^2-3y_{1}^2+y_{2}^2-z_{1}^2).\]
Since the above equals zero, we have that $z_{1}=\pm \sqrt{x_{2}^2-3y_{1}^2+y_{2}^2}$. 
After possibly reflecting $\R^3$ about the $xy$-plane, we may assume that $z_1=\sqrt{x_{2}^2-3y_{1}^2+y_{2}^2}$.
We then compute  $\coeff[s^3t^9]-\coeff[s^7t]=256qx_2 y_{1}^2y_2$. 
Since $q, x_2, y_1, y_2\neq 0$, this expression is nonzero.
This contradiction implies that the current case cannot occur.

\ignore{
 \begin{figure}[h]
    \centering
    \includegraphics[width=\textwidth]{MathematicaScreenshots/Snip19.PNG}
    \label{}
\end{figure}}

\ignore{
 \begin{figure}[h]
    \centering
    \includegraphics[width=\textwidth]{MathematicaScreenshots/Snip23.PNG}
    \label{}
\end{figure} }

\item \label{ca:x1x2y2neq0y1q=0} {\bf\boldmath The case where $x_1,x_2,y_2\neq 0$ and $qx_1=py_1$.} 
Since $x_1\neq 0$, we may rewrite $qx_1=py_1$ as $q=py_1/x_1$.
We then compute  
\[ \coeff[st]-\coeff[st^9]=-192 c^2 x_{1}^3 x_2 y_1 (p x_1 x_2 y_1 + 2 p x_{1}^2 y_2 - p x_{2}^2 y_2 + 3 p y_{1}^2 y_2 - p y_{2}^3 + r x_1 y_2 z_1 + p y_2 z_{1}^2). \] 
For this to equal zero, either $y_1=0$ or $r = p (-x_1x_2y_1 - 2 x_{1}^2 y_2 + x_{2}^2 y_2 - 3 y_{1}^2 y_2 + y_{2}^3 - y_2z_{1}^2)/(x_1y_2z_1)$. When $y_1=0$, the assumption $q=py_1/x_1$ implies that $q=0$.
Case \ref{y1=q=0x1x2y2neq0} assumes that $y_1=q=0$.
Case \ref{co:x1x2y1y2neq0,q=py1/x1,rVeryLong} assumes that $y_1\neq 0$ and 
$r = p (-x_1x_2y_1 - 2 x_{1}^2 y_2 + x_{2}^2 y_2 - 3 y_{1}^2 y_2 + y_{2}^3 - y_2z_{1}^2)/(x_1y_2z_1)$.

\ignore{
 \begin{figure}[h]
    \centering
    \includegraphics[width=\textwidth]{MathematicaScreenshots/Snip24.PNG}
    \label{}
\end{figure} }

\item \label{y1=q=0x1x2y2neq0} {\bf\boldmath The case where $x_1,x_2,y_2\neq 0$ and $y_1=q=0$.}
We compute 
\begin{align*} 
\coeff[s] + \coeff[st^{10}] + \coeff[s^9] + \coeff[s^9t^{10}]&=-32 c^2 p x_1 x_2 y_{2}^2,\\
\coeff[s] + \coeff[st^{10}] - \coeff[s^9] -\coeff[s^9t^{10}]&=-16 c^2 (-2 + c^2) x_1 x_2 y_{2}^2.
 \end{align*}
Since $x_1,x_2,y_2,c\neq 0$, the first expression leads to $p=0$ and the second to $c=\sqrt{2}$. 

 \ignore{
 \begin{figure}[h]
    \centering
    \includegraphics[width=\textwidth]{MathematicaScreenshots/Snip25.PNG}
    \label{}
\end{figure} }

We next compute  $\coeff[st^2
]+\coeff[s^9t^8]=-96 r x_2 y_{2}^2 z_1$. 
Since this equals zero, we get that $r=0$.
This is impossible, since we assume that $C_2$ is not centered at the origin. 
Thus, this case cannot occur.

\item \label{co:x1x2y1y2neq0,q=py1/x1,rVeryLong} {\bf\boldmath The case where $x_1,x_2,y_1,y_2\neq 0$, $q=py_1/x_1$, and $r = p (-x_1x_2y_1 - 2 x_{1}^2 y_2 + x_{2}^2 y_2 - 3 y_{1}^2 y_2 + y_{2}^3 - y_2z_{1}^2)/(x_1y_2z_1)$.} 
We compute
\[ \coeff[s^3t]-\coeff[s^3t^9]=-256 p x_{1}^3 y_2 (-x_2 y_1 + x_1 y_2) (c^2 x_1 x_2 - y_1 y_2) (x_1 x_2 + y_1 y_2). \] 
For this expression to be zero, either $p=0$, or $x_2y_1=x_1y_2$, or $c^2x_1x_2=y_1y_2$, or $x_1x_2=-y_1y_2$.
When $p=0$, the assumptions imply that $q=r=0$.
This is impossible, since we assume that $C_2$ is not centered at the origin.
Case \ref{ca:x1x2y1y2neq0,q=py1/x1,c^2x1x2=y1y2,rVeryLong} assumes that $c^2x_1x_2=y_1y_2$.
Case \ref{ca:x1x2y1y2neq0,q=py1/x1,x1x2=-y1y2,rVeryLong} assumes that $x_1x_2=-y_1y_2$.
Case \ref{ca:x1x2y1y2pneq0,q=py1/x1,x2y1=x1y2,rVeryLong} assumes that 
$p\neq 0$ and $x_2y_1=x_1y_2$.

\ignore{
 \begin{figure}[h]
    \centering
    \includegraphics[width=\textwidth]{MathematicaScreenshots/Snip28.PNG}
    \label{}
\end{figure}}

\item \label{ca:x1x2y1y2neq0,q=py1/x1,c^2x1x2=y1y2,rVeryLong} {\bf\boldmath The case where $x_1,x_2,y_1,y_2\neq 0$, $q=py_1/x_1$, $c^2x_1x_2=y_1y_2$, and $r = p (-x_1x_2y_1 - 2 x_{1}^2 y_2 + x_{2}^2 y_2 - 3 y_{1}^2 y_2 + y_{2}^3 - y_2z_{1}^2)/(x_1y_2z_1)$.} 
We rewrite $c^2 x_1x_2 = y_1y_2$ as $y_1=c^2 x_1x_2/y_2$. 
We then compute
\begin{align*}
\hspace{-5mm}\coeff[st]=48 c^4 x_1 x_{2}^2 y_{2}^4 (&c^2 p^2 x_{1}^2 x_{2}^2 + 3 c^4 p^2 x_{1}^2 x_{2}^2 - c^2 x_{1}^4 x_{2}^2 - 3 c^4 x_{1}^4 x_{2}^2 + 2 x_{1}^2 y_{2}^2- 2 c^2 x_{1}^2 y_{2}^2 + 2 p^2 x_{1}^2 y_{2}^2 \\
& - 2 x_{1}^4 y_{2}^2 - 2 p^2 x_{2}^2 y_{2}^2 + 2 x_{1}^2 x_{2}^2 y_{2}^2 - 2 p^2 y_{2}^4 + 2 x_{1}^2 y_{2}^4 + 2 p^2 y_{2}^2 z_{1}^2 - 2 x_{1}^2 y_{2}^2 z_{1}^2). 
\end{align*}
Since $c,x_1,x_2,y_2\neq 0$, the expression in the parentheses equals zero. 
We rewrite this expression as  
\begin{align}
       2y_{2}^2z_{1}^2 (x_1^2 - p^2) =c^2 p^2 x_{1}^2 x_{2}^2 &+ 3 c^4 p^2 x_{1}^2 x_{2}^2 - c^2 x_{1}^4 x_{2}^2 - 3 c^4 x_{1}^4 x_{2}^2 + 2 x_{1}^2 y_{2}^2- 2 c^2 x_{1}^2 y_{2}^2 \nonumber \\
& + 2 p^2 x_{1}^2 y_{2}^2 - 2 x_{1}^4 y_{2}^2 - 2 p^2 x_{2}^2 y_{2}^2 + 2 x_{1}^2 x_{2}^2 y_{2}^2 - 2 p^2 y_{2}^4 + 2 x_{1}^2 y_{2}^4. \label{eq:ComplicatedCase}
   \end{align}
   
We denote the right-hand side of the above equation as $G$.   
Case \ref{ca:x1x2y1y2neq0,p=x1,q=y1=c^2x1x2/y2} assumes that $x_1=p$.
In that case, $G=0$ and the assumption $q=py_1/x_1$ implies that $q=y_1$.
We do not require the assumption on $r$ in case \ref{ca:x1x2y1y2neq0,p=x1,q=y1=c^2x1x2/y2}, so we ignore it.
If $x_1=-p$ then we negate $\vb_1$, to again obtain that $x_1=p$.
This does not change anything else in the current situation, so $x_1=-p$ is also covered by Case \ref{ca:x1x2y1y2neq0,p=x1,q=y1=c^2x1x2/y2}. 

When $x_1\neq \pm p$, Equation \eqref{eq:ComplicatedCase} leads to $z_1 = \pm \sqrt{G/(2y_{2}^2 (x_1^2 - p^2))}$. 
By potentially reflecting $\R^3$ about the $xy$-plane, we may  assume that $z_1 = \sqrt{G/(2y_{2}^2 (x_1^2 - p^2))}$.
This scenario is studied in Case \ref{ca:x1x2y1y2neq0,p=x1,q=py1/x1,y1=c^2x1x2/y2,x1neqpmp}.

\ignore{
 \begin{figure}[h]
    \centering
    \includegraphics[width=\textwidth]{MathematicaScreenshots/Snip29.PNG}
    \label{}
\end{figure} }

\item \label{ca:x1x2y1y2neq0,p=x1,q=y1=c^2x1x2/y2} {\bf\boldmath The case where $x_1,x_2,y_1,y_2\neq 0$, $p=x_1$, and $q= y_1=c^2x_1x_2/y_2$.} 
We compute $\coeff[t^{10}]=-(-1 + c) c (1 + c) x_{2}^2 y_{2}^7$.
Since $c>0$, $c\neq 1$ and $x_2,y_2\neq 0$, this coefficient is nonzero.
This contradiction implies that the current case cannot occur.

\ignore{
 \begin{figure}[h]
    \centering
    \includegraphics[width=\textwidth]{MathematicaScreenshots/Snip30.PNG}
    \label{}
\end{figure} }

\item \label{ca:x1x2y1y2neq0,p=x1,q=py1/x1,y1=c^2x1x2/y2,x1neqpmp} {\bf\boldmath The case where $x_1,x_2,y_1,y_2\neq 0$, $q=py_1/x_1$, $y_1=c^2x_1x_2/y_2$, $x_1 \neq \pm p$, $z_1 = \sqrt{G/(2y_{2}^2 (x_1^2 - p^2))}$, and $r = p (-x_1x_2y_1 - 2 x_{1}^2 y_2 + x_{2}^2 y_2 - 3 y_{1}^2 y_2 + y_{2}^3 - y_2z_{1}^2)/(x_1y_2z_1)$.} 
We compute $\coeff[s^3t]=320 (-1 + c) c^4 (1 + c) (p - x_1)^2 x_1 (p + x_1)^2 x_{2}^2 y_{2}^6$. 
Since $c>0$, $c\neq 1$, $x_1 \neq \pm p$, and $x_1,x_2, y_2\neq 0$, this coefficient is nonzero.
This contradiction implies that the current case cannot occur.

\ignore{ \begin{figure}[h]
    \centering
    \includegraphics[width=\textwidth]{MathematicaScreenshots/Snip31.PNG}
    \label{}
\end{figure} }

\item \label{ca:x1x2y1y2neq0,q=py1/x1,x1x2=-y1y2,rVeryLong} {\bf\boldmath The case where $x_1,x_2,y_1,y_2\neq 0$, $q=py_1/x_1$, $x_1x_2=-y_1y_2$, and $r = p (-x_1x_2y_1 - 2 x_{1}^2 y_2 + x_{2}^2 y_2 - 3 y_{1}^2 y_2 + y_{2}^3 - y_2z_{1}^2)/(x_1y_2z_1)$.} 
Since $x_2\neq 0$, we may rewrite $x_1x_2=-y_1y_2$ as $x_1=-y_1y_2/x_2$. 
We compute 
\begin{align*}
\coeff[st]=-96 c^2 x_{2}^5 y_1 y_{2}^3 (p^2 x_{2}^6 &- p^2 x_{2}^4 y_{1}^2 + p^2 x_{2}^4 y_{2}^2 - 
   x_{2}^2 y_{1}^2 y_{2}^2 + c^2 x_{2}^2 y_{1}^2 y_{2}^2 - p^2 x_{2}^2 y_{1}^2 y_{2}^2 \\
   &- 
   x_{2}^4 y_{1}^2 y_{2}^2 + x_{2}^2 y_{1}^4 y_{2}^2 - x_{2}^2 y_{1}^2 y_{2}^4 + y_{1}^4 y_{2}^4 - 
   p^2 x_{2}^4 z_{1}^2 + x_{2}^2 y_{1}^2 y_{2}^2 z_{1}^2).
   \end{align*}
Since $c,x_2,y_1,y_2\neq 0$, the expression in the parentheses equals zero. 
We rewrite this as  
 \begin{align*}
     z_{1}^2x_{2}^2 (p^2x_2^2 - y_{1}^2 y_{2}^2) = p^2 x_{2}^6 - p^2 x_{2}^4 y_{1}^2 &+ p^2 x_{2}^4 y_{2}^2 - 
   x_{2}^2 y_{1}^2 y_{2}^2 + c^2 x_{2}^2 y_{1}^2 y_{2}^2\\
   &- p^2 x_{2}^2 y_{1}^2 y_{2}^2 - 
   x_{2}^4 y_{1}^2 y_{2}^2 + x_{2}^2 y_{1}^4 y_{2}^2 - x_{2}^2 y_{1}^2 y_{2}^4 + y_{1}^4 y_{2}^4.
 \end{align*}

We denote the right-hand side of the above equation as $H$. 
Case \ref{ca:x1x2y1y1neq0,q=py1/x1,x1=-y1y2/x2,px2=y1y1} assumes that $px_2=y_1y_2$, so $H=0$.
We do not require the assumption on $r$ in case \ref{ca:x1x2y1y1neq0,q=py1/x1,x1=-y1y2/x2,px2=y1y1}, so we ignore it.
The case where $px_2=-y_1y_2$ reduces to case \ref{ca:x1x2y1y1neq0,q=py1/x1,x1=-y1y2/x2,px2=y1y1} after negating $\bv_1$. 
Case \ref{ca:x1x2y1y1neq0,q=py1/x1,x1=-y1y2/x2,px2neqy1y1,LongRandZ1} assumes that $px_2\neq \pm y_1y_2$.
This implies that 
$z_1=\sqrt{H/(x_{2}^2 (p^2 x_{2}^2 - y_{1}^2 y_{2}^2))}$, possibly after a reflection of $\R^3$ about the $xy$-plane.

\ignore{
 \begin{figure}[h]
    \centering
    \includegraphics[width=\textwidth]{MathematicaScreenshots/Snip32.PNG}
    \label{}
\end{figure} }

\item \label{ca:x1x2y1y1neq0,q=py1/x1,x1=-y1y2/x2,px2=y1y1} {\bf\boldmath The case where $x_1,x_2,y_1,y_2\neq 0$, $q=py_1/x_1$, $x_1=-y_1y_2/x_2$, and $px_2=y_1y_2$.}
Since $x_2\neq 0$, we may rewrite $px_2\neq \pm y_1y_2$ as $p=y_1y_2/x_2$. 
We compute  
\[ \coeff[st]=-24 (-1 + c) c^2 (1 + c) x_{2}^7 y_1 y_2. \] 

Since $c>0$, $c\neq 1$, and $x_2,y_1,y_2\neq 0$, this coefficient is nonzero. 
This contradiction implies that the current case cannot occur.

\ignore{
 \begin{figure}[h]
    \centering
    \includegraphics[width=\textwidth]{MathematicaScreenshots/Snip33.PNG}
    \label{}
\end{figure} }

\item \label{ca:x1x2y1y1neq0,q=py1/x1,x1=-y1y2/x2,px2neqy1y1,LongRandZ1} {\bf\boldmath The case where $x_1,x_2,y_1,y_2\neq 0$, $q=py_1/x_1$, $x_1=-y_1y_2/x_2$, $r = p (-x_1x_2y_1 - 2 x_{1}^2 y_2 + x_{2}^2 y_2 - 3 y_{1}^2 y_2 + y_{2}^3 - y_2z_{1}^2)/(x_1y_2z_1)$, $z_1=\sqrt{H/(x_{2}^2 (p^2 x_{2}^2 - y_{1}^2 y_{2}^2))}$, and $px_2\neq \pm y_1y_2$.} 
We compute 
\[ \coeff[s^3t]=64 (-1 + c) (1 + c) (1 + 6 c^2) x_{2}^5 y_1 y_{2}^3 (p x_2 - y_1 y_2)^2 (p x_2 + y_1 y_2)^2. \]
Since $c>0$, $c\neq 1$, $x_2,y_1,y_2\neq 0$, and  $px_2\neq \pm y_1y_2$, this coefficient is nonzero.
This contradiction implies that the current case cannot occur.

\ignore{
 \begin{figure}[h]
    \centering
    \includegraphics[width=\textwidth]{MathematicaScreenshots/Snip34.PNG}
    \label{}
\end{figure} }

\item \label{ca:x1x2y1y2pneq0,q=py1/x1,x2y1=x1y2,rVeryLong} {\bf\boldmath The case where $x_1,x_2,y_1,y_2,p\neq 0$, $q=py_1/x_1$, $x_2y_1 = x_1y_2$, and $r = p (-x_1x_2y_1 - 2 x_{1}^2 y_2 + x_{2}^2 y_2 - 3 y_{1}^2 y_2 + y_{2}^3 - y_2z_{1}^2)/(x_1y_2z_1)$.} 
Since $y_2\neq 0$, we may rewrite $x_2y_1 = x_1y_2$ as $x_1=x_2y_1/y_2$. 
We compute 
\[ \coeff[t]+\coeff[s^{10}t^9]=32 (-1 + c) c (1 + c) p x_{2}^4 y_1 y_{2}^8 (x_{2}^2 + y_{2}^2 - z_{1}^2). \]

Since $c>0$, $c\neq 1$, and $p,x_2,y_1,y_2\neq 0$, we conclude that  $z_1=\pm \sqrt{x_{2}^2 + y_{2}^2}$. 
We may assume that $z_1= \sqrt{x_{2}^2 + y_{2}^2}$, possibly after a reflection of $\R^3$ about the $xy$-plane. 

\ignore{
 \begin{figure}[h]
    \centering
    \includegraphics[width=\textwidth]{MathematicaScreenshots/Snip35.PNG}
    \label{}
\end{figure} }

We next compute 
\[ \coeff[t^5]+\coeff[s^{10}t^5]=-512 (-1 + c) c (1 + c) p x_2^4 y_1^3 y_2^4 (x_2^2 + y_2^2). \]

Since $c>0$, $c\neq 1$, $p,x_2,y_1,y_2\neq 0$, the above coefficient is nonzero.
This contradiction implies that the current case cannot occur.

\ignore{
 \begin{figure}[h]
    \centering
    \includegraphics[width=\textwidth]{MathematicaScreenshots/Snip36.PNG}
    \label{}
\end{figure} }

\item \label{ca:c=sqrt2} {\bf\boldmath The case where $x_2,y_2\neq 0$, $qx_1 \neq py_1$, and $c=\sqrt{2}$.} We compute
\[ \coeff[s^3t]-\coeff[s^7t]=2304 x_2 (q x_1 - p y_1) y_2. \]

By the assumptions of this case, the above expression is nonzero. 
This contradiction implies that the current case cannot occur.

\ignore{
 \begin{figure}[h]
    \centering
    \includegraphics[width=\textwidth]{MathematicaScreenshots/Snip37.PNG}
\end{figure} }

\end{enumerate}

\parag{The case of two  ellipses centered at the origin.}
It remains to consider the case where $p=q=r=0$.
That is, the case where both ellipses are centered at the origin. 
In the case, we switch to a different parameterization of $C_2$.
By Lemma~\ref{le:OneVectorFromAxis}(b), there exist $\bv_1 = (x_1,y_1,z_1),\bv_2 = (x_2,0,z_2)$ such that 
\begin{align}
\gamma_1(s) &= \left(\frac{1-s^2}{1+s^2},c\cdot\frac{2s}{1+s^2},0\right), \nonumber \\[2mm] 
\gamma_2(t) &= \frac{t^2-1}{t^2+1}\mathbf{v}_1+\frac{2t}{t^2+1} \mathbf{v}_2. \label{eq:AppendixParam2}
\end{align}

\begin{enumerate}[label=(B\arabic*)]
    
\item \label{caB:general} {\bf The general case.}
We compute $\coeff[st]=-48 c^2 x_2 y_{1}^2 (x_1 x_2 + z_1 z_2)$. 
Since this coefficient equals zero, we have that $x_2=0$, or $y_1=0$, or $x_1 x_2 = -z_1 z_2$.
Case \ref{caB:x2=0} assumes that $x_2=0$.
Case \ref{caB:y1=0} assumes that $y_1=0$.
Case \ref{caB:x1x2+z1z2=0} assumes that $x_2,y_1\neq0$ and $x_1x_2=-z_1z_2$.

\ignore{
\begin{figure}[h]
\centering
\includegraphics[width=\textwidth]{MathematicaScreenshots/SnipS1.PNG}
\label{}
\end{figure} }
 
\item \label{caB:x2=0} {\bf\boldmath The case where $x_2=0$.} 
Since $\bv_2\neq 0$, we have that $z_2\neq 0$.
We compute  
\[ \coeff[t]=-8 (-1 + c) c (1 + c) y_1 z_{1}^2 z_{2}^2.\] 
For this coefficient to equal zero, either $y_1=0$ or $z_1=0$.
Case \ref{caB:x2=y1=0} assumes that $y_1=0$.
Case \ref{caB:z1=x2=0} assumes that $y_1\neq0$ and $z_1=0$.

\ignore{
\begin{figure}[h]
\centering
\includegraphics[width=\textwidth]{MathematicaScreenshots/SnipS2.PNG}
\label{}
\end{figure} }

\item \label{caB:x2=y1=0} {\bf\boldmath The case where $y_1=x_2=0$ and $z_2\neq0$.} 
Since $\bv_1$ and $\bv_2$ span a plane, we have that $x_1\neq 0$.
We compute $\coeff[st]=16 (-1 + c) (1 + c) x_1 z_1^2 z_2^2$. 
Since $x_1,z_2\neq 0$ and $c\neq \pm 1$, we get that $z_1=0$.

\ignore{
\begin{figure}[h]
\centering
\includegraphics[width=\textwidth]{MathematicaScreenshots/SnipS3.PNG}
\label{}
\end{figure} }

We next compute  
\[ 2\coeff[st]+\coeff[st^3]=-64 (-1 + c) (1 + c) x_1 (c^2 x_{1}^2 + z_{2}^2 - c^2 z_{2}^2). \]
For this to equal zero, we must have that $c^2 x_{1}^2 + z_{2}^2 - c^2 z_{2}^2=0$. 
We rewrite this as $z_2^2 = c^2x_1^2/(c^2-1)$.

We claim that, in this case, $C_1$ and $C_2$ are matching ellipses. 
Indeed, with \eqref{eq:AppendixParam2} in mind, we obtain  \eqref{FirstConic} with $m=a=c^2$.
By the assumptions of this case, we get that $\gamma_2(t) = \frac{t^2-1}{t^2+1}(x_1,0,0)+\frac{2t}{t^2+1} (0,0,z_2)$.
Combining this with $z_2^2 = c^2x_1^2/(c^2-1)$ leads to  \eqref{SecondConic} with $m=c^2$ and $b=z_2^2$. 

\ignore{ 
\begin{figure}[h]
\centering
\includegraphics[width=\textwidth]{MathematicaScreenshots/SnipS4.PNG}
\label{}
\end{figure} }

\item \label{caB:z1=x2=0} {\bf\boldmath The case where $z_1=x_2=0$ and $y_1\neq0$.} 
We compute  
\[ 2\coeff[st]+\coeff[st^3]=-128 (-1 + c) c^2 (1 + c) x_1 y_{1}^2.\]
Since this equals zero, we have that $x_1=0$.
We then compute
\[ 2\coeff[s^2t]+\coeff[s^2t^3]=-128 (-1 + c) c (1 + c) y_1 (y_{1}^2 - z_{2}^2 + c^2 z_{2}^2). \]
Since this equals zero, we get that $y_{1}^2 - z_{2}^2 + c^2 z_{2}^2=0$.
We rewrite this as $z_2^2 = -y_1^2/(c^2-1)$.

We claim that, in this case, $C_1$ and $C_2$ are matching ellipses. 
Indeed, we switch the $x$ and $y$ axes and set $m=1/c^2$.
Then, with \eqref{eq:AppendixParam2} in mind, we obtain  \eqref{FirstConic} with $a=1$ (after switching the axes).
By the assumptions of this case, we get that $\gamma_2(t) = \frac{t^2-1}{t^2+1}(0,y_1,0)+\frac{2t}{t^2+1} (0,0,z_2)$.
Combining this with $z_2^2 = -y_1^2/(c^2-1)$ leads to  \eqref{SecondConic} with $b=y_2^2$ (after switching the axes). 

\ignore{
\begin{figure}[h]
\centering
\includegraphics[width=\textwidth]{MathematicaScreenshots/SnipS5.PNG}
\label{}
\end{figure} }

\ignore{
\begin{figure}[h]
\centering
\includegraphics[width=\textwidth]{MathematicaScreenshots/SnipS6.PNG}
\label{}
\end{figure} }

\item \label{caB:y1=0} {\bf\boldmath The case where $y_1=0$.} 
Since we also have that $y_2=0$, the ellipse $C_2$ is contained in the $xz$-plane. 
We may set $\bv_2=(x_2,0,z_2)$ in \eqref{eq:AppendixParam2}  that is orthogonal to $\bv_1=(x_1,0,z_1)$. 
After this change, there exists a nonzero $r\in \R$ such that $\bv_2 = r\cdot (-z_1,0,x_1)$.
We compute
\[ 2\coeff[st]-\coeff[s^3t]=-192 (-1 + c) (1 + c) r^2 x_1 z_1^2. \]

Since the above equals zero, $x_1=0$ or $z_1=0$. 
By possibly switching the $x$ and $z$ axes, we may assume that $z_1=0$.
Then, $\bv_1=(x_1,0,0)$ and $\bv_2=(0,0,z_2)$, where $x_1,z_2\neq 0$.
We compute
\[ 2\coeff[st]+\coeff[st^3]=-64 (-1 + c) (1 + c) x_1 (c^2 x_{1}^2 + z_{2}^2 - c^2 z_{2}^2). \]

Since $c>0$ and $x_1,c\neq 1$, we obtain that $c^2 x_{1}^2 + z_{2}^2 - c^2 z_{2}^2=0$. 
This is the same situation as in the middle of Case \ref{caB:x2=y1=0}. 
By repeating the same analysis as in \ref{caB:x2=y1=0}, we obtain matching ellipses.

\ignore{
\begin{figure}[h]
\centering
\includegraphics[width=\textwidth]{MathematicaScreenshots/SnipS8.PNG}
\label{}
\end{figure} }

\item \label{caB:x1x2+z1z2=0} {\bf\boldmath The case where $x_1x_2=-z_1z_2$ and $x_2,y_1\neq0$.} 
We rewrite $x_1x_2=-z_1z_2$ as $x_1=-z_1z_2/x_2$.
We compute $\coeff[1]=4 c x_{2}^7 y_1 (-x_2 + c y_1) (x_2 + c y_1)$. 
Since this equals zero, we get that $x_2=\pm cy_1$.
By potentially reflecting $\R^3$ about the $yz$-plane, we may assume that $x_2= cy_1$.

We swap the positions of $\frac{t^2-1}{t^2+1}$ and $\frac{2t}{t^2+1}$ in \eqref{eq:AppendixParam2}.
This only changes the single point that is not parameterized by \eqref{eq:AppendixParam2}.
Equivalently, we may switch the roles of $\bv_1$ and $\bv_2$ in \eqref{eq:AppendixParam2}.
After this switch, we compute 
$\coeff[1]+\coeff[t^{10}]=2 c^6 y_{1}^6 z_{1}^2 z_{2}^2$.
Since this expression equals zero and $c,y_1\neq 0$, we have that $z_1= 0$ or $z_2= 0$.
In either case, the assumption $x_1x_2=-z_1z_2$ turns to $x_1=0$.
Case \ref{caB:x1=z1=0x2=cy1} assumes that $z_1=0$.
Case \ref{caB:x1=z2=0x2=cy1} assumes that $z_2=0$.

\ignore{
\begin{figure}[h]
\centering
\includegraphics[width=\textwidth]{MathematicaScreenshots/SnipS9.PNG}
\label{}
\end{figure} }

\ignore{
\begin{figure}[h]
\centering
\includegraphics[width=\textwidth]{MathematicaScreenshots/SnipS10.PNG}
\label{}
\end{figure} }

\item \label{caB:x1=z1=0x2=cy1} {\bf\boldmath The case where $x_1=z_1=0$, $x_2=cy_1$, and $x_2,y_1\neq0$.} 
Instead of the switched $\bv_1$ and $\bv_2$ from case \ref{caB:x1x2+z1z2=0}, we return to original vectors. We compute
\[ 2\coeff[s]-\coeff[t]=-24 (-1 + c) c^3 (1 + c) y_{1}^3.\] 
This is nonzero, since $y_1\neq 0$, $c>0$, and $c\neq 1$.
This contradiction implies that the current case cannot occur.

\ignore{
\begin{figure}[h]
\centering
\includegraphics[width=\textwidth]{MathematicaScreenshots/SnipS11.PNG}
\label{}
\end{figure} }

\item \label{caB:x1=z2=0x2=cy1} {\bf\boldmath The case where $x_1=z_2=0$, $x_2=cy_1$, and $x_2,y_1\neq0$.} 
As in Case \ref{caB:x1x2+z1z2=0}, we switch the roles of $\bv_1$ and $\bv_2$.
We then compute  
\[ 4\coeff[st^3]+\coeff[st^5]=48 (-1 + c) c^3 (1 + c) y_1^3. \] 
This is nonzero, since $y_1\neq 0$, $c>0$, and $c\neq 1$.
This contradiction implies that the current case cannot occur.

\ignore{
\begin{figure}[h]
\centering
\includegraphics[width=\textwidth]{MathematicaScreenshots/SnipS12.PNG}
\label{}
\end{figure}
}    
\end{enumerate}

In all cases, we obtained either matching ellipses or a contradiction. 
Thus, two ellipses span many distances unless they are matching or aligned circles.

\section{Mathematica computations: the case of two hyperbolas} \label{app:Wolfram2}

In this appendix, we complete the proof of Lemma \ref{le:EightConicCases} for two hyperbolas, by using Wolfram Mathematica. 
In particular, we assume that all coefficients of $\rho_N(s,t)$ are zero, and show that this implies that the hyperbolas span many distances. 
As explained in Section \ref{sec:FewDistancesBetweenConics}, there exist $c>0$, $p,q,r\in\R$, and vectors $\bv_1 = (x_1,y_1,z_1),\bv_2 = (x_2,y_2,z_2)$, such that  
\begin{align*}
\gamma_1(s) &= \left(\frac{s^2+1}{2s},c\cdot\frac{1-s^2}{2s},0\right),\\
\gamma_2(t) &= (p,q,r)+\frac{t^2+1}{2t}\cdot \mathbf{v}_1+\frac{t^2-1}{2t}\cdot \mathbf{v}_2. 
\end{align*}

We now perform a case analysis, as in Appendix \ref{app:Wolfram}. 
Figure \ref{fi:generalCaseC} includes the code for the beginning of Case \ref{ca:general2}.

 \begin{figure}[h]
    \centering
    \includegraphics[width=0.97\textwidth]{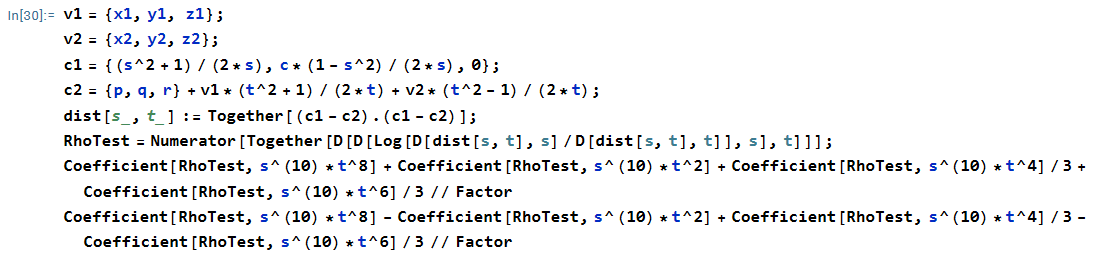}
   \caption{The Mathematica code of Case \ref{ca:general2}. }
\label{fi:generalCaseC}

\end{figure}

\begin{enumerate}[label=(C\arabic*)] 
\item \label{ca:general2}{\bf The general case.} 
For our first step, we compute
\begin{align*}
\coeff[s^{10}t^8]+&\coeff[s^{10}t^2]+\coeff[s^{10}t^4]/3+\coeff[s^{10}t^6]/3= \\
&4 (1 + c^2) (c \cdot y_2-x_2) \left((c\cdot x_1+y_1)^2 + (c\cdot x_2+y_2)^2 + (z_1^2+z_2^2)(1+c^2)\right), \\[2mm]
\coeff[s^{10}t^8]-&\coeff[s^{10}t^2]+\coeff[s^{10}t^4]/3-\coeff[s^{10}t^6]/3= \\
&4 (1 + c^2) (c \cdot y_1-x_1) \left((c\cdot x_1+y_1)^2 + (c\cdot x_2+y_2)^2 + (z_1^2+z_2^2)(1+c^2)\right).
\end{align*}

Since the above expressions equal zero, either $(cx_1+y_1)^2 + (cx_2+y_2)^2 +(z_1^2+z_2^2)(1+c^2)=0$ or $x_1=c\cdot y_1$ and $x_2=c\cdot y_2$.
We first assume that $(cx_1+y_1)^2 + (cx_2+y_2)^2 +(z_1^2+z_2^2)(1+c^2)=0$, which implies that $z_1=z_2=0$.
This in turn implies that the hyperbola $C_2$ is contained in a plane parallel to the $xy$-plane.
By Lemma \ref{le:ParallelPlanes}, the two hyperbolas span many distances.

It remains to consider the case where $x_1=c\cdot y_1$ and $x_2=c\cdot y_2$.
In this case, we compute 
\[ \coeff[t^5] = 16 c (1 + c^2)^2 r (y_2 z_1 - y_1 z_2).\]

Since the above coefficient equals zero, either $r=0$ or $y_2z_1=y_1z_2$.
Case \ref{caB:y1=cx1,y_2z_1=y_1z_2} assumes that $y_2z_1=y_1z_2$.
Case \ref{caB:y1=cx1,r=0} assumes that $y_2z_1\neq y_1z_2$ and $r=0$. 

\item \label{caB:y1=cx1,y_2z_1=y_1z_2} {\bf\boldmath The case where $x_1=cy_1$, $x_2=cy_2$, and $y_2z_1=y_1z_2$.} 
We first assume that $y_1=0$. 
The assumption $x_1=cy_1$ implies that $x_1=0$.
Since $\vb_1$ is non-zero, we get that $z_1\neq 0$. 
Since $\vb_1$ and $\vb_2$ are independent, we have that $y_2\neq 0$ (note that $y_2=0$ also implies that $x_2=0$). 
This contradicts $y_2z_1=y_1z_2$, so $y_1\neq 0$.

We set $z_2 = y_2z_1/y_1$ and compute
\[ \coeff[t^2] - \coeff[t^4]  = -4 c (1 + c^2) y_1^3 \left( y_1^2\cdot(1-c^2)^2 +z_1^2(1+c^2)\right).\]
Since this expression equals zero, we have that $y_1^2\cdot(1-c^2)^2 +z_1^2(1+c^2)=0$.
However, $y_1^2\cdot(1-c^2)^2\ge 0$ and $z_1^2(1+c^2)>0$. 
This contradiction implies that the current
case cannot occur.

\item \label{caB:y1=cx1,r=0} {\bf\boldmath The case where $x_1=cy_1$, $x_2=cy_2$, $y_2z_1\neq y_1z_2$, and $r=0$.} 
We compute
\[ \coeff[st^5] = 128 c^2 (1 + c^2) (y_1 z_2-y_2 z_1) (y_1 z_1 - y_2 z_2).\]
Since this coefficient equals zero, we have that $y_1z_1=y_2z_2$.
Case \ref{caB:y1=cx1,y1=r=0} assumes that $y_1=0$.
Case \ref{caB:y1=cx1,r=0,y1neq0} assumes that $y_1\neq 0$.

\item \label{caB:y1=cx1,y1=r=0} {\bf\boldmath The case where $x_1=cy_1$, $x_2=cy_2$, $y_2z_1\neq y_1z_2$, $y_1z_1=y_2z_2$, and $y_1=r=0$.} 
By the assumption $x_1=cy_1$, we have that $x_1=0$.
Since $\vb_1$ is non-zero, we have that $z_1\neq 0$.
We also have that $y_2\neq 0$, since otherwise the above argument would imply that 
$\vb_1$ and $\vb_2$ are parallel. 
Then, $y_1z_1=y_2z_2$ leads to $z_2 = 0$.

We compute
\[ \coeff[s^6] = -6 c (1 + c^2) y_2 (y_2^2(1+ c^2) + z_1^2)^2.\]
Since $y_2\neq 0$ and $c\neq 0$, the above coefficient is nonzero.
This contradiction implies that the current
case cannot occur.

\item \label{caB:y1=cx1,r=0,y1neq0} {\bf\boldmath The case where $x_1=cy_1$, $x_2=cy_2$, $y_2z_1\neq y_1z_2$, $r=0$, $y_1z_1=y_2z_2$, and $y_1\neq 0$.} 
In this case, we may write $y_1z_1=y_2z_2$ as $z_1 = y_2z_2/y_1$. 
Assume that $z_2 =0$. 
Then $z_1 = y_2z_2/y_1=0$, which implies that the hyperbola $C_2$ is contained in the $xy$-plane.
In this case, Theorem \ref{th:PachDeZeeuw} implies that the two hyperbolas span many distances.

It remains to consider the case where $z_2 \neq 0$.
We compute
\[ \coeff[s^2] = 2 c (y_1 - y_2)^3 \left(y_1^2(1 + c^2) + z_2^2\right) \left(y_1(1-c^2)^2 + z_2^2(1 + c^2)\right).\]
Since the above coefficient equals zero and $y_2,z_2\neq 0$, we get that $y_1=y_2$.
This in turn implies that $y_2z_1 = y_1z_1 = y_2z_2 = y_1z_2$.
This contradiction to the assumption $y_2z_1\neq y_1z_2$ implies that the current
case cannot occur.
\end{enumerate}

The above analysis covers all possible cases, so two hyperbolas always span many distances.

\end{document}